\DeclareSymbolFontAlphabet{\amsmathbb}{AMSb}%
\newcommand{\IP}{\amsmathbb{P}}
\newcommand{\R}{\amsmathbb{R}}
\newcommand{\C}{\amsmathbb{C}}
\newcommand{\N}{\amsmathbb{N}}
\newcommand{\IS}{\amsmathbb{S}}
\newcommand{\cD}{\mathcal{D}} %Domains of Operators
\newcommand{\cE}{\mathcal{E}}
\newcommand{\cF}{\mathcal{F}}
\newcommand{\cH}{\mathcal{H}}
\newcommand{\cL}{\mathcal{L}}
\newcommand{\cM}{\mathcal{M}}
\newcommand{\cO}{\mathcal{O}}
\newcommand{\cQ}{\mathcal{Q}}
\newcommand{\cS}{\mathcal{S}}
\newcommand{\cT}{\mathcal{T}}
\newcommand{\cW}{\mathcal{W}}
\newcommand{\cZ}{\mathcal{Z}}
\DeclareMathOperator{\E}{\amsmathbb{E}} %expectation
\DeclareMathOperator{\Cov}{\mathsf{Cov}}
\DeclareMathOperator{\Diag}{Diag}
\newcommand{\dd}{\,\mathrm{d}}
\newcommand{\sL}{\mathsf{L}}
\newcommand{\sZ}{\mathsf{Z}}
\newcommand{\sZb}{\widehat{\mathsf{Z}}}
\newcommand{\sW}{\mathsf{W}}
\newcommand{\sWb}{\widehat{\mathsf{W}}}
\newcommand{\sP}{\mathsf{P}}
\newcommand{\cj}[1]{%
	\overline{#1}%
}
\newtheorem{lemma}{Lemma}[section]
\newtheorem{proposition}[lemma]{Proposition}
\newtheorem{theorem}[lemma]{Theorem}
\theoremstyle{remark}
\newtheorem{remark}[lemma]{Remark}
\theoremstyle{plain}
\theoremstyle{definition}
\newcommand{\psds}{amplitude spectral densities\xspace}
\newcommand{\psd}{amplitude spectral density\xspace}
\definecolor{darkgreen}{rgb}{0,.6,0}
\tikzset{>=latex}
\colorlet{myblue}{blue!65!black}
\colorlet{mydarkblue}{blue!50!black}
\colorlet{myred}{red!65!black}
\colorlet{mydarkred}{red!40!black}
\colorlet{veccol}{green!70!black}
\colorlet{vcol}{green!70!black}
\colorlet{xcol}{blue!85!black}
\tikzstyle{vector}=[->,very thick,xcol,line cap=round]
\tikzstyle{xline}=[myblue,very thick]
\tikzstyle{yzp}=[canvas is zy plane at x=0]
\tikzstyle{xzp}=[canvas is xz plane at y=0]
\tikzstyle{xyp}=[canvas is xy plane at z=0]
\def\tick#1#2{\draw[thick] (#1) ++ (#2:0.12) --++ (#2-180:0.24)}
\pgfplotsset{compat=newest}
\newcommand{\q}[1]{``#1''}
\renewcommand{\Re}{\operatorname{Re}}
\renewcommand{\Im}{\operatorname{Im}}
\begin{document}
	
	\title[Galerkin--Chebyshev Gaussian random fields using SFEM]{Non-stationary Gaussian random fields on hypersurfaces: Sampling and strong error analysis}

\author[E.~Jansson]{Erik Jansson} \address[Erik Jansson]{\newline Department of Mathematical Sciences
	\newline Chalmers University of Technology \& University of Gothenburg
	\newline S--412 96 G\"oteborg, Sweden.} 
\email[]{erikjans@chalmers.se}

\author[A.~Lang]{Annika Lang} \address[Annika Lang]{\newline Department of Mathematical Sciences
	\newline Chalmers University of Technology \& University of Gothenburg
	\newline S--412 96 G\"oteborg, Sweden.} \email[]{annika.lang@chalmers.se}
\author[M.~Pereira]{Mike Pereira} \address[Mike Pereira]{\newline Department of Geosciences and Geoengineering
	\newline Mines Paris \newline  PSL University
	\newline F--77 305  Fontainebleau, France.} \email[]{mike.pereira@minesparis.psl.eu}

\thanks{ Acknowledgment: The authors thank Mihály Kovács, Julie Rowlett and Iosif Polterovich for helpful discussions.
	This work was supported in part by the European Union (ERC, StochMan, 101088589), by the Mines Paris - PSL / INRAE  “Geolearning” chair, by the Swedish Research Council (VR) through grant no.\ 2020-04170, by the Wallenberg AI, Autonomous Systems and Software Program (WASP) funded by the Knut and Alice Wallenberg Foundation, and by the Chalmers AI Research Centre (CHAIR). Funded by the European Union.
	Views and opinions expressed are however those of the author(s) only and do not necessarily reflect those of the European Union or the European Research Council Executive Agency. Neither the European Union nor the granting authority can be held responsible for them.
}

\subjclass{60G60, 60H35, 60H15,65C30, 60G15, 58J05, 41A10, 65N30,65M60}

\keywords{Gaussian random fields, non-stationary random fields, stochastic partial differential equations, surface finite element method, Chebyshev approximation, Gaussian processes.}

	\begin{abstract}
		A flexible model for non-stationary Gaussian random fields on hypersurfaces is introduced.
		The class of random fields on curves and surfaces is characterized by an	\psd of a second order elliptic differential operator.
		Sampling is done by a Galerkin--Chebyshev approximation based on the surface finite element method and Chebyshev polynomials.
		Strong error bounds are shown with convergence rates depending on the smoothness of the approximated random field.
		Numerical experiments that confirm the convergence rates are presented.
	\end{abstract}

	\maketitle

\section{Introduction}
\label{sec:intro}

Random fields are  powerful tools for modeling spatially dependent data.
They have found uses in a wide range of applications, for instance in geostatistics, cosmological data analysis, climate modeling, and biomedical imaging \cite{MP11, Farag2014}.
One challenge in the modeling of spatial data is non-stationary behavior, i.e., different behaviors in different parts of the domain.  
Another challenge is that the domain may be a non-Euclidean space, for instance, a surface such as the sphere or on the cortical surface of the brain.
In this paper, we present a surface finite element-based method to sample a flexible class of non-stationary random fields on curves and surfaces and show its strong convergence. 
The method, building on the foundational work for stationary fields introduced in \cite{Lang2023}, is an extension of the stochastic partial differential equation (SPDE) approach pioneered by \cite{W63} and popularized by \cite{Lindgren2011}. 
The idea behind our method is to color white noise by applying a function of an elliptic differential operator~$\cL$. 
Formally, we study Gaussian random fields on curves and surfaces of the form 
\begin{align}
	\label{eq:fielddef}
	\cZ = \gamma(\cL)\cW,	
\end{align}
where $\cL$ is an elliptic differential operator, $\cW$ denotes white noise, $\gamma$ is a function, called \emph{\psd} in analogy to the spectral analysis of time signals \cite{heinzel2002spectrum}.
By letting the coefficients of the differential operator vary over the domain, we can obtain local, non-stationary behaviors. 
If $1/\gamma$ is well-defined over $\R_+$,  one may formally view $\cZ$  as the solution to the stochastic partial differential equation 
\begin{align*}
	(1/\gamma)(\cL)\cZ = \cW.	
\end{align*}

\begin{figure}[tb]
	\subfigure[A random field where the choice of coefficients are inspired the continents.]{\includegraphics[width=0.3\textwidth, trim=110mm 5mm 110mm 5mm, clip]{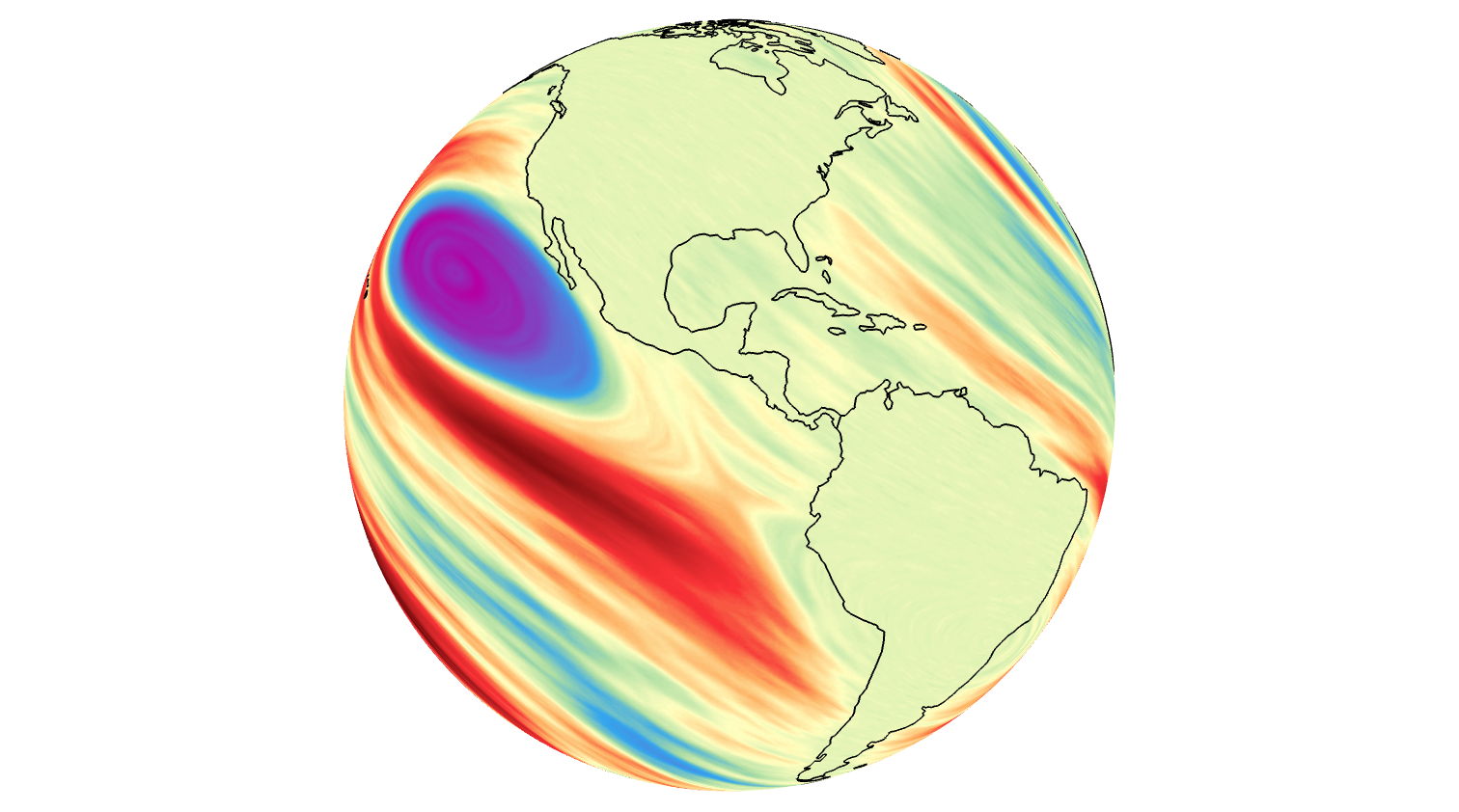}
		\label{fig:earth}}\hspace*{1em}
	\subfigure[A random field on the cortical surfaces only locally activated.]{\includegraphics[width=0.3\textwidth, trim=150mm 30mm 150mm 30mm, clip]{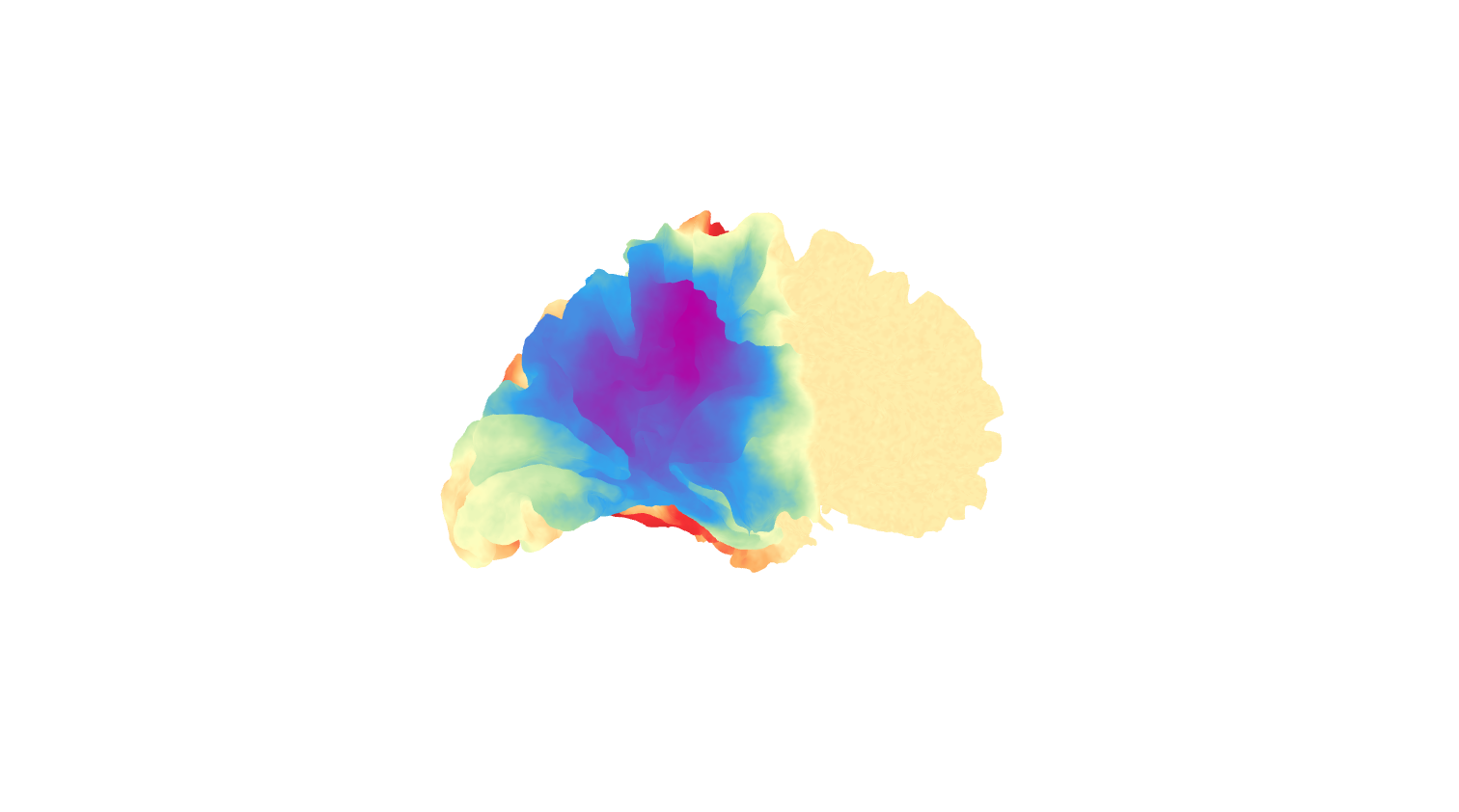}
		\label{fig:brain}}\hspace*{1em}
	\subfigure[A random field on a star-shaped domain.]{\includegraphics[width=0.3\textwidth]{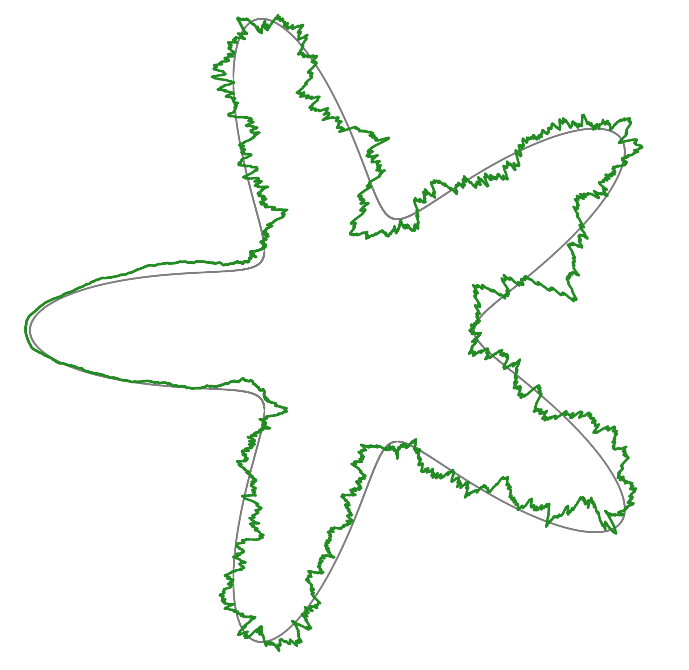}
		\label{fig:third_example}}
	\caption{Examples of random field samples generated with our method.\label{fig:nice_ex}}
\end{figure}

For instance, consider the three examples depicted in \Cref{fig:nice_ex}. To generate random field samples, there are two components of $\cL$ that we can vary: the diffusion matrix and the potential. 
In \Cref{fig:earth}, we use a diffusion matrix to give the field preferred directions, more specifically it elongates field in the northwest-southeast direction. The potential is large over the continents and small over the oceans, effectively ``turning off'' the random field over land.
In \Cref{fig:brain}, the potential is small in the front of the brain and large elsewhere, so that the field is only large in the front of the brain. 
Finally, \Cref{fig:third_example} shows the method used to generate a non-stationary random field in the one-dimensional case. To illustrate the value of the field at a point, we move it in the normal direction for a distance proportional to the value of the field. 
In all three cases, we see that the field behaves locally varying over the domain. With the suggested model, we can achieve preferred directions, local activation, and local deactivation. 

The computational method we use to solve \Cref{eq:fielddef}, i.e., sample the random fields, is based on the surface finite element method (SFEM), a computational method pioneered by \cite{Dziuk1988,Dziuk2013} and that has been used in the context of the generation of Whittle--Mat\'ern random fields with Laplace--Beltrami operators in for instance \cite{Bonito2024,JKL22,Lang2023}.
Our main mathematical contribution is a strong convergence result for equations with more general elliptic operators and \psds than fractional powers.
Using a functional calculus approach to the finite element discretization error, we obtain a strong rate of convergence of order $\cO(C_\alpha(h)h^{2\min\{\alpha-d/4;1\}})$, where $d = 2$ for surfaces and $d=1$ for curves, and $C_\alpha(h)$ is a dimension-dependent logarithmic factor. 

The SPDE approach to random fields and their approximation have been studied previously for both surfaces and Euclidean geometries, examples include \cite{Bonito2024,BorovitskiyTMD20,BKK20,Cox2020,JKL22,Lang2023,Lindgren2011,LBR22}. 
However, to the best of our knowledge, we are the first to present a strong error analysis for the non-stationary case on hypersurfaces that are only given by mesh points. 
In contrast to the method presented in \cite{HKS20}, we do not need to explicitly know the true hypersurface and perform computations on it. Instead, we only require a polyhedral approximation of the hypersurface.
Moreover,  like in \cite{Cox2020}, we achieve this without requiring explicit approximation  bounds on the eigenfunctions of the elliptic operator, and contrary to for instance \cite{BorovitskiyTMD20}, computing the eigenfunctions.
In fact, the computation of eigenfunctions, with theoretical guarantees, is a notoriously difficult problem, see e.g., \cite{Boffi2010}. Our method circumvents this issue by introducing a Chebyshev approximation. 

Our main contribution is a powerful, efficient, and flexible tool for the modeling and  sampling of non-stationary random fields on curves and surfaces with proven accuracy. In particular, using tools from complex analysis and operator theory, we derive strong error bounds for approximations of arbitrary sufficiently smooth transformations of elliptic operators, where we do not require assumptions on the approximability of individual eigenfunctions.

The paper is structured as follows:
In \Cref{sec:det}, we introduce the relevant deterministic framework. 
We provide the necessary background on geometry and functional analysis in \Cref{sec:det_theory}. 
This is followed by a description of the main computational tool, surface finite elements in \Cref{sec:sfem}. 
Finally, \Cref{sec:det_est} provides the relevant error estimates in the deterministic setting. 
In \Cref{sec:stoch}, we collect all material in the stochastic setting. 
We introduce first the class of considered random fields in \Cref{sec:rf} and their  Galerkin--Chebyshev approximation in \Cref{sec:method}. The proof of its strong convergence is split into the SFEM error in \Cref{sec:convergence} and the Chebyshev approximation error in \Cref{sec:conv_cheb_ap}. 
In \Cref{sec:num}, we present numerical experiments that confirm the strong error bounds.  
%The numerical experiments in our earlier work \cite{Lang2023} were performed in the setting of this article. We are now able to close the gap between the abstract assumptions in the theory in~\cite{Lang2023} and the extra approximation that we needed in our implementation in Section~7 of that article. Therefore, we refer the reader for convergence plots confirming our theoretical results in this work to that article.
The source code used to generate the figures is available at this address: \url{https://github.com/mike-pereira/SFEMsim}.

\section{Deterministic theory: geometry, functional analysis and finite elements}
\label{sec:det}

Before we are able to approximate random fields on hypersurfaces, we need to introduce and partially extend the existing literature on surface finite element approximations due to so far unconsidered error bounds required in our stochastic setting. We introduce the functional analytic setting in \Cref{sec:det_theory}, discuss surface finite element methods in \Cref{sec:sfem} and show error bounds in the deterministic setting in \Cref{sec:det_est}.

\subsection{Geometric and functional analytic setting}
\label{sec:det_theory}
Let $\cM\subset \R^{d+1}$ be a $d$-dimensional ($d \le 2$) compact oriented smooth hypersurface ($k\ge 2$) without boundary, i.e., for any $x_0\in\cM$, there exists an open set $U_{x_0}\subset \R^{d+1}$ containing $x_0$ and a function $\phi_{x_0}\in C^\infty(U_{x_0})$ such that $\nabla \phi_{x_0} \neq 0$ on $\cM \cap U_{x_0}$ and 
\begin{equation*}
	\cM \cap U_{x_0}=\lbrace x\in U_{x_0}, \phi_{x_0}(x)=0 \rbrace.
\end{equation*}
The tangent space of $\cM$ at $x \in \cM$ is the $d$-dimensional subspace of $\R^{d+1}$ given by $T_{x}\cM=[\nabla\phi_{x}]^\perp$ (where $\nabla$ denotes the usual gradient of functions of $C^1(\R^{d+1})$ and $\perp$ denotes the orthogonal complement in $\R^{d+1}$ with respect to the standard Euclidean inner product.).
Since $\cM$ is oriented, there exists a smooth map $\nu : \cM \rightarrow \R^{d+1}$ assigning to each point $x \in \cM$ a unit vector $\nu(x) = \pm \nabla \phi_x / \Vert \nabla\phi_x\Vert$ perpendicular to the tangent space $T_{x}\cM$.
Our hypersurface~$\cM$ is a Riemannian manifold equipped with the metric $g$ that is the pullback of the Euclidean metric on $\R^{d+1}$. 
For instance, if $\cM = \IS^2$, this results in the standard round metric.

Let  $\nabla_\cM$ be the gradient operator acting on differentiable functions of $\cM$, and let $\Delta_\cM$ denote the Laplace--Beltrami operator on $(\cM,g)$. We denote by $\dd A$ the surface measure on $\cM$, and by $L^2(\cM)$ the Hilbert space of $\dd A$-measurable square integrable complex-valued functions, equipped with the inner product $(\cdot,\cdot)_{L^2(\cM)}$ defined by
\begin{equation*}
	(u,v)_{L^2(\cM)} = \int_{\cM} u \cj{v} \dd A, \quad u,v \in  L^2(\cM).
\end{equation*}
The \emph{Sobolev spaces} with smoothness index $s \in \R^+$ are then defined via Bessel potentials by 	
\begin{equation*}
	H^s(\cM)=\left(I -\Delta_{\cM}\right)^{-s/2}L^2(\cM),
\end{equation*}
with corresponding norm $\|\cdot\|_{H^s(\cM)}=\|\left(I-\Delta_{\cM}\right)^{s/2} \cdot \|_{L^2(\cM)}$.
For $s<0$,  $H^s(\cM)$ is defined as the space of distributions generated by
\begin{equation}
	\label{eq:sobolev_spaces}
	H^s(\cM)= \left\{u=\left(I-\Delta_{\cM}\right)^{k}v, ~ v \in H^{2k+s}(\cM) \right\},
\end{equation}
where $k \in \N$ is the smallest integer such that $2k+s>0$. In this case, the corresponding norm is given by $\|u\|_{H^s(\cM)} = \|v\|_{H^{2k+s}(\cM)}$.
We set $H^0(\cM)=L^2(\cM)$. 
The reader is referred to~\cite{HLS18}, \cite{strichartz1983analysis}, \cite{Triebel1985}, and references therein for more details on Sobolev spaces defined using Bessel potentials. 

In this work, we consider elliptic differential operators associated to bilinear forms  $\mathsf{A}_{\cM}$ given by 
\begin{align}
	\label{eq:biform}
	\mathsf{A}_{\cM}(u,v)=\int_{\cM} (\cD \nabla_{\cM} u) \cdot ({ \nabla_{\cM} \cj v}) \dd A + \int_\cM (V u) \cj{v} \dd A, \quad u,v\in H^1(\cM),
\end{align}
where for any $x_0\in\cM$ the diffusion matrix $\cD(x_0) = [D_{ij}(x_0)]_{i,j=1}^{d+1}$ is a real-valued, symmetric matrix such that for any $w\in T_{x_0}\cM$, $\cD(x_0) w \in T_{x_0}\cM$ and $(\cD(x_0) w)\cdot \cj{w} >0$ if $w\neq 0$. In particular, since $T_{x_0}\cM = [\nu(x_0)^\perp]$, $\cD(x_0)$ is simply a matrix admitting $\nu(x_0)$ as an eigenvector with some eigenvalue $\mu_1(x_0) \in\R$, and such that the eigenvalues $\mu_i(x_0)$, $2\le i\le d+1$ associated with its other  eigenvectors are positive. Without loss of generality, we may assume that $\mu_1(x_0)=0$, meaning that $\cD\nu =0$,  that the eigenvalues $\mu_i(x_0)$, $2\le i\le d+1$ are uniformly lower-bounded and upper-bounded on $\cM$  by positive constants, and for simplicity that $D_{ij} \in C^\infty(\cM)$ for any $1\le i,j\le d+1$.
Finally, we assume that $V \in L^\infty(\cM)$ is a real-valued function that satisfies $V_-\le V \le V_+$ for some $0 < V_- \le V_+ <+ \infty$.

Throughout this paper, let $\mathsf{A}_{\cM}$ be coercive and continuous, i.e., there exist positive constants $\delta$ and $M$ such that for all $u,v \in H^1(\cM)$,
\begin{align}
	&\mathsf{A}_{\cM}(u,u) \geq \delta \|u\|_{H^1(\cM)}^2,\label{eq:coerc} \\ 
	&\left|\mathsf{A}_{\cM}(u,v) \right | \leq M \|u\|_{H^1(\cM)} \, \|v\|_{H^1(\cM)} \label{eq:cont}.
\end{align}
Following \cite[Equation (1.33)]{Yagi2010}, $\mathsf{A}_\cM$ gives rise to an associated elliptic differential operator $\cL:H^1(\cM) \rightarrow H^{-1}(\cM)$ defined weakly by 
\begin{align*}
	\mathsf{A}_{\cM}(u,v) = \int_{\cM} (\cL u) \overline{v} \dd A, \quad u,v \in H^1(\cM).
\end{align*}
The spectral properties of this operator are detailed in the next proposition, which is proven in \Cref{app:detproofs}.

\begin{proposition}\label{prop:spectral}
	Let $\delta>0$ be the coercivity constant defined in \Cref{eq:coerc}. There exists a set of eigenpairs $\lbrace (\lambda_i, e_i)\rbrace_{i\in\N}$ of $\cL$ consisting of a sequence of increasing real-valued eigenvalues $0<\delta \le \lambda_1 \le \lambda_2 \le \cdots $ with $\lambda_i \rightarrow +\infty$ as $i\rightarrow +\infty$, and $\lbrace e_i\rbrace_{i\in\N}$ forms an orthonormal basis of~$L^2(\cM)$ where each $e_i$ is real-valued.
\end{proposition}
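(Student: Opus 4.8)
The plan is to realize $\cL$ through its bounded inverse and then invoke the spectral theorem for compact self-adjoint operators. By the coercivity \eqref{eq:coerc} and continuity \eqref{eq:cont} of $\mathsf{A}_{\cM}$ on $H^1(\cM)$, the Lax--Milgram theorem provides, for each $f\in L^2(\cM)\subset H^{-1}(\cM)$, a unique $u\in H^1(\cM)$ with $\mathsf{A}_{\cM}(u,v)=(f,v)_{L^2(\cM)}$ for all $v\in H^1(\cM)$; this is precisely the weak formulation of $\cL u=f$. I would write $T\colon L^2(\cM)\to H^1(\cM)$, $f\mapsto u=\cL^{-1}f$, for the solution operator, which satisfies $\|Tf\|_{H^1(\cM)}\le \delta^{-1}\|f\|_{L^2(\cM)}$ directly from coercivity and Cauchy--Schwarz. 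Since $\cM$ is a compact manifold, the embedding $H^1(\cM)\hookrightarrow L^2(\cM)$ is compact (Rellich--Kondrachov), so $T$, viewed as an operator on $L^2(\cM)$, is compact.

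Next I would verify that $T$ is self-adjoint and positive on $L^2(\cM)$. Because $\cD$ is symmetric and $V$ is real-valued, $\mathsf{A}_{\cM}$ is Hermitian, that is $\overline{\mathsf{A}_{\cM}(v,u)}=\mathsf{A}_{\cM}(u,v)$. Writing $u=Tf$, $w=Tg$ and testing the two weak formulations against $w$ and $u$ respectively yields $(Tf,g)_{L^2(\cM)}=\mathsf{A}_{\cM}(u,w)=(f,Tg)_{L^2(\cM)}$, so $T$ is self-adjoint; taking $g=f$ gives $(Tf,f)_{L^2(\cM)}=\mathsf{A}_{\cM}(u,u)\ge \delta\|u\|_{H^1(\cM)}^2>0$ for $f\neq 0$, so $T$ is positive and in particular injective.

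The spectral theorem for compact, self-adjoint, positive operators then produces a non-increasing sequence of strictly positive eigenvalues $\mu_1\ge \mu_2\ge\cdots\to 0$ whose eigenfunctions $\{e_i\}_{i\in\N}$ form an orthonormal basis of $L^2(\cM)$; injectivity rules out a kernel, and since $L^2(\cM)$ is infinite-dimensional the sequence is genuinely infinite. Setting $\lambda_i=\mu_i^{-1}$ inverts the correspondence, as $Te_i=\mu_i e_i$ is equivalent to $\cL e_i=\lambda_i e_i$, and the $\lambda_i$ then form an increasing sequence tending to $+\infty$. For the lower bound I would test $\cL e_i=\lambda_i e_i$ against $e_i$ to get $\lambda_i=\mathsf{A}_{\cM}(e_i,e_i)\ge \delta\|e_i\|_{H^1(\cM)}^2\ge \delta\|e_i\|_{L^2(\cM)}^2=\delta$. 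Finally, the real coefficients make each eigenspace invariant under complex conjugation, so on each finite-dimensional eigenspace I would pass to a real orthonormal basis (taking real and imaginary parts and orthonormalizing over $\R$), producing real-valued $e_i$.

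Almost all of the analytic input here is standard, so the one ingredient to get right is the compactness of the embedding $H^1(\cM)\hookrightarrow L^2(\cM)$, which is where the compactness and smoothness of the hypersurface $\cM$ enter essentially; everything else is bookkeeping around Lax--Milgram and the spectral theorem. The only subtlety worth flagging is that self-adjointness and positivity must be established for the complex sesquilinear form \emph{before} one may invoke the real-valued reduction of the eigenfunctions.
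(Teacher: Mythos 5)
Your proof is correct, but it reaches the eigenbasis by a different route than the paper. The paper simply cites the standard spectral theory of elliptic operators on compact manifolds (Shubin, Section 8) to obtain a (possibly complex-valued) orthonormal eigenbasis with $\lambda_i\to\infty$, and then devotes its proof entirely to the two refinements: the lower bound $\lambda_1\ge\delta$ via coercivity (exactly as you do) and the construction of a real-valued basis eigenspace by eigenspace. You instead make the existence part self-contained: Lax--Milgram gives the solution operator $T=\cL^{-1}$, Rellich--Kondrachov on the compact hypersurface makes $T$ compact on $L^2(\cM)$, Hermitian symmetry of $\mathsf{A}_{\cM}$ makes it self-adjoint and positive, and the spectral theorem for compact self-adjoint operators does the rest. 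Your argument is more elementary and avoids the external citation; the paper's citation additionally delivers smoothness of the eigenfunctions ($E_\lambda\subset C^\infty(\cM)$) and finite-dimensionality of the eigenspaces for free, facts it leans on in the real-valued reduction (in your setting finite multiplicity comes instead from compactness of $T$). The only place you are terser than you should be is the last step: after taking real and imaginary parts of a complex eigenbasis of $E_\lambda$ and orthonormalizing over $\R$, one still has to check that the resulting real span is all of $E_\lambda$ over $\C$ (the paper does this with a short contradiction argument using that $(f_j,w)=(\Re f_j,w)+i(\Im f_j,w)$); this is routine but worth a sentence.
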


Since the operator $\cL$ differs from the Laplace--Beltrami operator only by a zeroth-order potential term and a diffusion function in the second order term, switching between the two operators corresponds to a change of metric on $\cM$. 
Therefore, the eigenvalue problem for $\cL$  is equivalent to that for the Laplace--Beltrami operator on $\cM$ equipped with a possibly rough metric if the coefficients of $\cD$ are not smooth. The results in \cite{Bandara2021} imply growth rates on the eigenvalues in accordance with Weyl's law, and more specifically that there exist $ c_\cM, C_\cM >0$ such that for any $i\in\N$
\begin{equation}
	\label{eq:gbounds_ev}
	c_\cM i^{2/d} \le \lambda_i \le C_\cM i^{2/d}.
\end{equation}

As a last step in this subsection, we introduce nonlinear functions of~$\cL$ which allow later in \Cref{sec:stoch} for the definition of a variety of Gaussian random fields.
For that, we call a function $\gamma : \R_+\rightarrow \R$ an $\alpha$-\emph{\psd} if
\begin{enumerate}
	\item[1)]\label{assum:gamma1} $\gamma$ is extendable to a holomorphic function on $H_{\pi/2} \coloneqq \{z \in \C: |\arg z| \leq \pi/2\}$.
	\item[2)]\label{assum:gamma2} There exist constants $ C_{\gamma}>0$ and $\alpha >0$ such that for all $z \in H_{\pi/2}$,
	\begin{equation}
		\label{eq:fundamental_ineq}
		\left| \gamma(z) \right| \le C_\gamma |z|^{-\alpha}.
	\end{equation}
\end{enumerate}
Applying a \psd to $\cL$ results in a linear operator $\gamma(\cL)$ whose action on functions $f \in L^2(\cM)$ is defined by
\begin{align}
	\label{eq:def_func_of_op}
	\gamma(\cL) f = \sum_{i=1}^\infty \gamma(\lambda_i) (f,e_i)_{L^2(\cM)} e_i,	
\end{align}
where $\lbrace (\lambda_i, e_i)\rbrace_{i\in\N}$ are the eigenpairs of $\cL$ defined in \Cref{prop:spectral}.
A typical example is the function $\gamma(\lambda) = (\kappa^2+\lambda)^{-\alpha}$ for $\alpha > d/4$ and $\kappa>0$, which can be used to obtain Whittle--Mat\'ern random fields~\cite{Lang2023}. 

\begin{remark}\label{rem:regf}
	For any $\alpha$-\psd $\gamma$ and any $f\in L^2(\cM)$, $\gamma(\cL)f \in L^2(\cM)$. In fact, we have for any $s\in [0,\alpha]$, $\Vert \cL^{s}(\gamma(\cL)f)\Vert_{L^2(\cM)}<\infty$. Indeed,
	\begin{align*}
		\Vert\cL^{s}\gamma(\cL) f\Vert_{L^2(\cM)}= \sum_{i=1}^\infty \vert\lambda_i^s\gamma(\lambda_i)\vert^2 \vert(f,e_i)_{L^2(\cM)}\vert^2 ,	
	\end{align*}
	where, since $\gamma$ is an $\alpha$-\psd and $\lambda_{i}\in H_{\pi/2}$ for any $i
	\in\N$, $\vert\lambda_i^s\gamma(\lambda_i)\vert \lesssim \vert\lambda_i\vert^{-(\alpha-s)}$. Using then the fact that for any $i\in\N$, $\lambda_{i}\ge \lambda_1>0$, and that $\alpha-s \ge 0$, we conclude that $\vert\lambda_i^s\gamma(\lambda_i)\vert \lesssim \vert\lambda_1\vert^{-(\alpha-s)}\lesssim 1$ and therefore $	\Vert\cL^{s}\gamma(\cL) f\Vert_{L^2(\cM)}\lesssim \sum_{i=1}^\infty \vert(f,e_i)_{L^2(\cM)}\vert^2 = \Vert f\Vert_{L^2(\cM)}<\infty$.
\end{remark}

The goal of the remainder of this section is to study the approximation of functions of the form $u = \gamma(\cL) f$, where $f \in L^2(\cM)$. Formally, if $1/\gamma$ is well-defined on the spectrum of~$\cL$, then this is the solution to the partial differential equation $(1/\gamma)(\cL)u =  f$.

\subsection{SFEM--Galerkin approximation}
\label{sec:sfem}

The idea behind the surface finite element method, as introduced by \cite{Dziuk1988}, is to work on a polyhedral approximation of the surface that is in some sense close to the true surface $\cM$. 
More precisely, fix $h>0$ and let $\cM_h$ be a piecewise polygonal surface consisting of non-degenerate simplices (for $d=2$, triangles and for $d=1$, line segments) with vertices on~$\cM$, and such that $h$ is the size of the largest simplex defined as the in-ball radius. The set of simplices making up the discretized surface is denoted by $\mathcal{T}_h$, thus meaning that
\begin{equation*}
	\cM_h=\bigcup_{T_j \in \mathcal{T}_h} T_j,
\end{equation*}
and we assume that for any two simplices in $\cT_h$, it holds that their intersection is either empty, or a common edge or vertex.    

Following \cite[Section 1.4.1]{Dziuk2013}, we assume that the triangulation $\cT_h$ is quasi-uniform, shape-regular, and that the number of simplices sharing the same vertex can be upper-bounded by a constant independent of $h$. In turn, these two assumption imply that $N_h \propto h^{-d}$ where $N_h\in\N$ denotes the number of vertices of $\cM_{h}$. 
\begin{figure}
	\centering
	\begin{tikzpicture}[scale = 1.0,vert/.style args={of #1 at #2}{insert path={%
				#2 -- (intersection cs:first
				line={#1}, second line={#2--($#2+(0,10)$)}) }},
		vert outwards/.style args={from #1 by #2 on line to #3}{insert path={
				#1 -- ($#1!#2!90:#3$)
		}}]
		
		\coordinate (c1) at (3.5,3.5);
		\coordinate (c2) at (4.5,5);
		\coordinate (c3) at (8,4);
		\coordinate (c4) at (7.5,3);
		\coordinate (c5) at (6,1);
		\coordinate (midpoint) at (6.25,4.5);
		\coordinate (liftpoint) at (7.0,5.05);
		% show the points
		% \foreach \n in {c1,c2,c3,c4} \fill [blue] (\n) circle (1pt) node [below] {\n};
		\draw [ultra thick] (c1) to[bend left = 20]
		(c2) to[bend left =50] (c3) to[bend left] (c4) node[shift = {(-2,2.7)}]{$\mathcal{M}$};
		\draw [very thick, gray] (c1) to
		(c2) to (c3) to (c4) node[shift = {(-2.5,1.4)}]{$\mathcal{M}_h$};
		\draw[ thick,->,blue,vert outwards={from {($(c2)!0.5!(c3)$)} by 0.5cm on line to {(c3)}}] node[shift ={(-0.3,0)}]{$\nu_h$};
		
		\draw[ thick,->,red,vert outwards={from {($(liftpoint)!0.0!(c3)$)} by 0.7cm on line to {(10.8,0)}}] node[shift ={(-0.3,0)}]{$\nu$};
		\foreach \p in {c1,c2,c3,c4}
		\fill[gray] (\p) circle(3pt);
		\fill[black](midpoint) circle(3pt) node[below left]{x};
		\draw[dashed] (midpoint) to (liftpoint);
		\fill[black](liftpoint) circle(3pt) node[shift={(0.7,0.0)}]{a(x)};
		
	\end{tikzpicture}
	\caption{One-dimensional illustration of the lift. 
		The lift is along the normal vector $\nu$ to the surface $\cM$.\label{fig:lift}}
\end{figure}

The discrete surface $\cM_h$ is close to the true surface $\cM$ in the sense that $\cM_h$ is contained in a small neighborhood around $\cM$ defined as follows. 
First, note that $\cM$ can be seen as the boundary of some bounded open set $G\subset \R^{d+1}$ with exterior normal $\nu$. Then, following \cite[Section 2.3]{Dziuk2013}, we consider that there exists some (small) $\varpi >0$ such that $\cM_h$ is contained in a so-called \emph{tubular neighborhood} $U_\varpi$ of $\cM$ defined by  
\begin{equation*}
	U_{\varpi}=\lbrace x\in\R^{d+1} : | d_s(x) | < \varpi\rbrace,
\end{equation*} 
where $d_s : \R^{d+1} \rightarrow \R$ denotes the oriented distance function given by
\begin{align*}
	d_s(x)=\begin{cases}
		\inf_{y \in \cM} |x-y|,~~ x	 \in \R^{d+1} \setminus {G},\\
		\inf_{y \in \cM} -|x-y|,~~x \in G. 
	\end{cases}
\end{align*}

We denote by $\dd A_h$ the surface measure on $\cM_h$, and by $L^2(\cM_h)$ the Hilbert space of  $\dd A_h$-measurable square integrable functions, equipped with the inner product $(\cdot,\cdot)_{L^2(\cM_h)}$ defined by
\begin{align*}
	(u_h,v_h)_{L^2(\cM_h)} = \int_{\cM_h} u_h \cj{v}_h \dd A_h, \quad u_h,v_h\in L^2(\cM_h).
\end{align*}
Following \cite[Section 1.2.1]{Bonito2020}, we denote by $\sigma :\cM_h \rightarrow \R^+$ the \textit{area element} given by $\sigma =\mathrm{d}A/\mathrm{d}A_h$, such that for all $v \in L^2(\cM)$, 
\begin{align}\label{eq:sigma}
	\int_{\cM} v \dd A = \int_{\cM_h} \sigma v^{-\ell}	\dd A_h,
\end{align}
where we next introduce the lift and its inverse denoted by $\ell$ and $-\ell$, respectively.

A key element of SFEM is that we can move between $\cM$ and $\cM_h$ using the so-called \emph{lift operator}. 
To construct the lift operator, we note that $d_s \in C^k(U_\varpi)$ for $k \geq 2$, and that for any $x\in U_\varpi$, there exists a unique $a(x)\in \cM$  such that
\begin{equation*}
	x=a(x)+d_s(x)\nu(a(x)),
\end{equation*}
where $\nu$ denotes the normal at $a(x)$ to $\cM$.
In particular, this implies that any point $x\in U_\varpi$ can be uniquely described by the pair $(a(x), d_s(x)) \in \cM \times \R$,
and this procedure defines an isomorphism $p\colon\cM_h \rightarrow\cM$ given by  
\begin{equation*}
	p(x)=x-d_s(x) \nu(a(x)),  \quad x \in \cM_h. 
\end{equation*} 
Therefore, any function $\eta\colon\cM_h \rightarrow \C$ may be lifted to~$\cM$ by $\eta^\ell = \eta \circ p^{-1} : \cM \rightarrow \C$. 
Likewise, the inverse lift of any function $\zeta: \cM \rightarrow \C$ is given by 
$\zeta^{-\ell} = \zeta \circ p : \cM_h \rightarrow \C$.
The procedure is illustrated in Figure~\ref{fig:lift} in the one-dimensional setting. 
Note that the points on the discretized surface are \emph{lifted} along the normal $\nu$ to the surface~$\cM$.

The mapping $a$ is used to define the gradient of functions on $\cM_h$ \cite{Dziuk2013}. 
More specifically, for a differentiable $\eta:\cM_h \to \C$, the gradient is given by
\begin{equation}
	\label{eq:tan_grad_def_h}
	\nabla_{\cM_h}\eta(x)=\nabla \check{\eta}(x) - (\nabla \check{\eta}(x) \cdot\nu_h(x))\nu_h(x)\in T_{x}\cM_h,
\end{equation}
where $\nu_h$ is the normal of $\cM_h$ and $\check\eta$ is the continuous extension of $\eta$ defined by $\check\eta : x\in U_\varpi \mapsto \check\eta(x)=\eta^\ell(a(x))$. 
With this definition, the Laplace--Beltrami operator on $\cM_h$ can be defined by $\Delta_{\cM_h} = \nabla_{\cM_h} \cdot \nabla_{\cM_h} $, and Sobolev spaces on $\cM_h$ are defined in complete analogy to those on~$\cM$.

The analogue of the bilinear form $\mathsf{A}_{\cM}$  on $\cM_h$ is given by
\begin{equation}\label{eq:AMh}
	\mathsf{A}_{\cM_h}(u_h,v_h)=\int_{\cM_h} (\cD^{-\ell} \nabla_{\cM_h} u_h) \cdot ( \nabla_{\cM_h} \cj v_h) \dd A_h + \int_{\cM_h} (V^{-\ell} u_h) \cj{v}_h \dd A_h, 
\end{equation}
where $ u_h,v_h\in H^1(\cM_h)$ and $\cD^{-\ell} =  [D_{ij}^{-\ell}]_{i,j=1}^{d+1}$. As in \cite{Dziuk2013}, we assume that there exists $h_0~\in~(0,1)$ small enough, such that $\mathsf{A}_{\cM_h}$ is coercive (and continuous) whenever $h \leq h_0$. 
Unless stated otherwise, we now assume that this last condition on $h$ is fulfilled.

To conclude this subsection, we introduce the (linear) finite element space $S_h$ on $\cM_h$.
The finite element space $S_h$ is defined as the complex span of the standard real-valued \textit{nodal basis} 
\begin{align*}
	\psi_1, \dots,\psi_{N_h}\colon \cM \rightarrow \R,
\end{align*} where for any $i\in \lbrace 1,\dots,N_{h}\rbrace$, $\psi_i|_T$ is a polynomial of at most degree one taking the value $1$ at the $i$-th vertex of $\cM_{h}$ and $0$ at all the other vertices, i.e.,
\begin{equation*}
	S_h = \operatorname{span}\left(\psi_1, \dots, \psi_{N_h}\right) \subset H^1(\cM_h).
\end{equation*}
By construction, $S_h$ is a vector space of dimension $N_h$. 
Its counterpart on $\cM$ is the lifted finite element space $S_h^\ell$ given by 
\begin{equation*}
	S_h^\ell=\left\{\phi_h^\ell, \phi_h \in S_h \right\} \subset H^1(\cM). 
\end{equation*}
On $S_h$, we can, as $\cL$ on~$\cM$, associate to the bilinear form $\mathsf{A}_{\cM_h}$ in \Cref{eq:AMh} a linear operator $\sL_h: S_h \rightarrow S_h$ which maps any $u_h\in S_h$ to the unique $\sL_h u_h\in S_h$ satisfying, for any $v_h \in S_h$, the equality
\begin{align*}
	\mathsf{A}_{\cM_h}(u_h,v_h) = ( \sL_h u_h ,v_h )_{L^2(\cM_h)}. 
\end{align*}
Similarly, if the bilinear form $\mathsf{A}_{\cM}$ introduced in \Cref{eq:biform} is restricted to $S_h^\ell$, we can associate it to a linear operator $\cL_h: S_h^\ell \rightarrow S_h^\ell$ that maps any $u_h^\ell\in S_h$ to the unique $\cL_h u_h^\ell\in S_h^\ell$ that satisfies, for any $v_h^\ell \in S_h^\ell$, the equality
\begin{align*}
	\mathsf{A}_{\cM}(u_h^\ell,v_h^\ell) = ( \cL_h u_h^\ell,v_h^\ell)_{L^2(\cM)}. 
\end{align*}

Since the bilinear forms $\mathsf{A}_{\cM}$ and $\mathsf{A}_{\cM_h}$ are coercive, positive definite, Hermitian and have real coefficients, 
these two operators are diagonalizable in the sense that they each give rise to a set of $N_h$ eigenpairs \cite{Hall2013}. 
On the one hand, there exists a sequence $0\le \Lambda_1^h   \leq \dots \leq \Lambda_{N_h}^h$ and an $L^2(\cM_h)$-orthonormal basis  $E_1^h,\ldots, E_{N_h}^h$ of $S_h$ such that 
\begin{equation*}
	\sL_h E_i^h = \Lambda_{i}^h E_i^h, \quad 1\le i\le N_h,
\end{equation*}
and similarly there exists a sequence $0\le \lambda_1^h   \leq \dots \leq \lambda_{N_h}^h$ and an $L^2(\cM)$-orthonormal basis  $e_1^h,\ldots, e_{N_h}^h$ of $S_h^\ell$ such that 
\begin{equation*}
	\cL_h e_i^h = \lambda_{i}^h e_i^h, \quad 1\le i\le N_h.
\end{equation*}
In particular, using the same approach as in \Cref{prop:spectral}, we can assume that the eigenfunctions $\lbrace E_i^h\rbrace_{1\le i\le N_h}$ and  $\lbrace e_i^h\rbrace_{1\le i\le N_h}$ are all real-valued.
The eigenvalues of the operators $\cL$, $\cL_h$ and $\sL_h$ are linked to one another through the following lemma, due to \cite[Lemma 3.1]{Bonito2018}, \cite[Lemma 4.1]{Bonito2024} and \cite[Theorem 6.1]{Strang2008-pr}. 

In the following, $A \lesssim B$ is shorthand for that there is a constant $C > 0 $ such that $A \leq C B$. 
\begin{lemma}[Eigenvalue error bounds]
	\label{lem:ev_bound}
	Let $\lbrace\lambda_i\rbrace_{i\in\N}$, $\lbrace\lambda_i^h\rbrace_{1\le i\le N_h}$ and $\lbrace\Lambda_i^h\rbrace_{1\le i\le N_h}$ denote the eigenvalues of the operators $\cL$, $\cL_h$ and $\sL_h$, respectively. Then,
	\begin{align}\label{eq:SF_growth}
		\lambda_i\le \lambda_i^h \lesssim (1+ h^2)\lambda_i,  \quad 1\le i\le N_h,
	\end{align}
	and 
	\begin{align}\label{eq:BDO_error}
		|\lambda_i^h - \Lambda_i^h|\lesssim h^2 \lambda_i^h \lesssim h^2 \lambda_i,  \quad 1 \le i\le N_h.
	\end{align} 
\end{lemma}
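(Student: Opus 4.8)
The plan is to establish the two displayed estimates separately and then chain them, since each is essentially a variational comparison drawn from one of the three cited sources. For \Cref{eq:SF_growth} I would work entirely on $\cM$ and exploit the Courant--Fischer min--max characterization. Writing the Rayleigh quotient $R(v) = \mathsf{A}_{\cM}(v,v)/\|v\|_{L^2(\cM)}^2$, both $\lambda_i$ and $\lambda_i^h$ arise by minimizing $\max_{v\in W\setminus\{0\}} R(v)$ over $i$-dimensional subspaces $W$, the only difference being that $\lambda_i^h$ restricts $W$ to lie in the conforming subspace $S_h^\ell \subset H^1(\cM)$. Since this is a minimization over a strictly smaller family of subspaces, the lower bound $\lambda_i \le \lambda_i^h$ is immediate. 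For the upper bound I would feed into the min--max a trial subspace built from the finite element interpolants of the first $i$ eigenfunctions $e_1,\dots,e_i$; bounding the resulting Rayleigh quotients uses the linear interpolation estimate in $H^1(\cM)$ together with the elliptic regularity bound $\|e_j\|_{H^2(\cM)} \lesssim \lambda_j$, which combine into the factor $1 + Ch^2$. This is precisely the conforming Galerkin eigenvalue approximation argument of \cite[Lemma 3.1]{Bonito2018} and \cite[Lemma 4.1]{Bonito2024}, which I would invoke rather than reprove.

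For \Cref{eq:BDO_error} the comparison is between the two \emph{discrete} problems: one posed on $S_h^\ell \subset L^2(\cM)$ with the exact form $\mathsf{A}_{\cM}$, the other on $S_h \subset L^2(\cM_h)$ with the perturbed form $\mathsf{A}_{\cM_h}$. The lift $\phi_h \mapsto \phi_h^\ell$ is a linear isomorphism between $S_h$ and $S_h^\ell$, so both eigenvalue sets can be read off from the same finite-dimensional generalized eigenvalue problem, but with stiffness and mass data that differ by the geometric discretization error. I would quantify this error using the area element relation \Cref{eq:sigma} for $\sigma = \dd A / \dd A_h$ together with the distortion of the tangential gradient under the lift; the classical surface finite element estimates give $|\sigma - 1| \lesssim h^2$ and a comparable $O(h^2)$ bound on the gradient distortion, so that after transport by the lift both the stiffness form and the mass form agree with their discrete counterparts up to a multiplicative $1 + O(h^2)$ factor, uniformly over $S_h$. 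Feeding these perturbations into the generalized eigenvalue perturbation bound of \cite[Theorem 6.1]{Strang2008-pr} yields $|\lambda_i^h - \Lambda_i^h| \lesssim h^2 \lambda_i^h$. The final inequality $h^2 \lambda_i^h \lesssim h^2 \lambda_i$ then follows by substituting the upper bound already proved in \Cref{eq:SF_growth} and absorbing the $(1+h^2)$ factor into the constant for $h \le h_0 < 1$.

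The main obstacle is the geometric perturbation step for \Cref{eq:BDO_error}: one must show that pulling the surface form $\mathsf{A}_{\cM_h}$ back to $\cM$ via the lift perturbs both the numerator and the denominator of the Rayleigh quotient by only $O(h^2)$. The numerator is the delicate part, since it requires comparing $\cD^{-\ell}\nabla_{\cM_h} u_h$ with $\cD\nabla_{\cM} u_h^\ell$, and hence controlling the deviation of the projection onto $T_{x}\cM_h$ from the projection onto $T_{p(x)}\cM$. This is exactly where the $C^2$ regularity of $\cM$ and the shape-regularity of $\cT_h$ enter, and where the $h^2$ rate originates; the constants must be tracked carefully to confirm they are independent of $i$ so that the bound holds uniformly for $1 \le i \le N_h$.
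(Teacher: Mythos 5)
The paper does not actually prove \Cref{lem:ev_bound}; it is stated as a consequence of the three cited results, so there is no in-paper argument to compare against line by line. Your reconstruction follows exactly the route those citations take: Courant--Fischer over the conforming subspace $S_h^\ell\subset H^1(\cM)$ for \eqref{eq:SF_growth}, and a geometric perturbation of the stiffness and mass forms transported by the lift, fed into the min--max perturbation bound, for \eqref{eq:BDO_error}. For the second estimate the $O(h^2)$ form-consistency you need is precisely what the paper itself supplies later in \Cref{lem:geometric-consistency-A} together with the area-element bounds \eqref{eq:ineq_sigma}, so that part of your plan is fully supported by machinery already in the paper.

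One imprecision worth flagging in the first part: inserting the interpolants of $e_1,\dots,e_i$ into the Rayleigh quotient and using $\|e_j-I_he_j\|_{H^1(\cM)}\lesssim h\|e_j\|_{H^2(\cM)}\lesssim h\lambda_j$ does \emph{not} directly give a factor $1+Ch^2$; the naive expansion produces $\lambda_i\bigl(1+Ch\lambda_i^{1/2}+Ch^2\lambda_i\bigr)$, with a first-order cross term. The standard way out is the Babu\v{s}ka--Osborn observation that the eigenvalue error is \emph{quadratic} in the $H^1$ eigenfunction approximation error, giving $\lambda_i^h-\lambda_i\lesssim h^2\lambda_i^2$, after which Weyl's law \eqref{eq:gbounds_ev} and $N_h\propto h^{-d}$ give $h^2\lambda_i\lesssim 1$ for $i\le N_h$, so that $\lambda_i^h\lesssim\lambda_i$ as claimed (the factor $(1+h^2)$ in \eqref{eq:SF_growth} is, with the $\lesssim$, no stronger than a uniform constant). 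Since you defer the details to the cited lemmas this is not a fatal gap, but if you were to write the argument out you would need the quadratic-convergence step rather than the plain interpolant bound.
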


Finally, we remark that the eigenvalues $\lbrace\Lambda_i^h\rbrace_{1\le i\le N_h}$ of $\sL_h$ can be linked to the eigenvalues of some classical finite element matrices. Let $\bm C$ and $\bm R$ be the so-called \emph{mass matrix} and \emph{stiffness matrix}, respectively, and defined from the nodal basis by
\begin{equation}
	\begin{aligned}
		\bm C &= \left[(\psi_k, \psi_l)_{L^2(\cM_h)}\right]_{1\le k,l\le N_h}, \quad
		\bm R & =\left[\mathsf{A}_{\cM_h}(\psi_k, \psi_l)\right]_{1\le k,l\le N_h}.
	\end{aligned}
	\label{eq:def_CG}
\end{equation}
As defined, $\bm C$ is a symmetric positive definite  matrix and $\bm R$ is a symmetric positive semi-definite matrix. Let then  $\sqrt{\bm C}\in\R^{N_h\times N_h}$ be an invertible matrix satisfying $\sqrt{\bm C}(\sqrt{\bm C})^T=\bm C$.
Then, by \cite[Corollary 3.2]{Lang2023}, the eigenvalues $\lbrace\Lambda_i^h\rbrace_{1\le i\le N_h}$ are also the eigenvalues of the matrix $\bm S\in\R^{N_h\times N_h}$ defined by
\begin{equation}
	\bm S = \big(\sqrt{\bm C}\big)^{-1}\bm R\big(\sqrt{\bm C}\big)^{-T}.
	\label{eq:def_S}
\end{equation}
Besides, if $\bm\psi$ denotes the vector-valued function given by  $\bm\psi=(\psi_1,\dots,\psi_{N_h})^T$, then the mapping $F: \R^{N_h} \rightarrow S_h$, defined by
\begin{equation*}
	F(\bm v) = F(\bm v)=\bm\psi^T\big(\sqrt{\bm C}\big)^{-T}\bm v, 
	\quad \bm v\in\R^{N_h},
	\label{eq:def_isom_eig}
\end{equation*}
is an isomorphism whose inverse maps the eigenfunctions $\lbrace E_{i}^h\rbrace_{1\le i\le N_h}$ to (orthonormal) eigenvectors of $\bm S$. This means in particular that $\bm S$ can also be written as
\begin{equation}\label{eq:ortho_S}
	\bm S
	=
	\bm V
	\Diag\big(\Lambda_{1}^h , \dots,\Lambda_{1}^{N_h }\big)
	\bm V^T,
\end{equation}
where $\bm V=\big(F^{-1}(E_1^h)| \cdots | F^{-1}(E_{N_h}^h)\big)\in\R^{N_h\times N_h}$.

\subsection{Deterministic error analysis}
\label{sec:det_est}

From now on, let us make the following assumption on the mesh size $h$. Let $\delta_0 \in (0, \delta/2)$ be fixed and arbitrary and let $h_0 \in(0,1)$ such that $h_0^{-2} > \delta_0$ and $\vert \log h_0\vert >1$. We now assume for the remainder of the paper n                                                                                                                                                                                                                            that mesh size satisfies $h\in(0,h_0)$.

Based on the introduced framework, we are now in place to quantify the error between functions of the operators $\cL$, $\cL_h$ and $\sL_h$. 
Let $P_h:L^2(\cM) \rightarrow S_h^\ell$ be the $L^2$-projection onto $S_h^\ell$ and let $\sP_h : L^2(\cM_{h}) \rightarrow S_h$ the $L^2$-projection onto~$S_h$. We note that the operators  $\cL$, $\cL_h$, and $\sL_h$ define norms that are equivalent to the standard Sobolev norms, i.e.,
\begin{align}\label{eq:norm_equiv}
	\begin{gathered}
		\|\cL^{1/2} v\|_{L^2(\cM)} \sim \|v\|_{H^1(\cM)},~\|\sL_h^{1/2} V_h\|_{L^2(\cM)} \sim \|V_h\|_{H^1(\cM_h)}, \\
		\|\cL_h^{1/2} v_h\|_{L^2(\cM)} \sim \|v_h\|_{H^1(\cM)},
	\end{gathered}
\end{align}
for all $v \in H^1(\cM)$, and all $v_h \in S_h^\ell$ and $V_h \in S_h$.

With that at hand we are ready to state our main result in this section.
\begin{proposition}
	\label{prop:det}
	Let $\gamma$ be an $\alpha$-\psd. There exists some constant such that for any $h\in(0,h_0)$,  for all $f \in L^2(\cM)$ and 
	%	\begin{align*}
		%		\|\gamma(\cL_h)P_h f-\gamma(\cL)f &\|_{L^2(\cM)}	 \lesssim C_{\alpha}(h) h^{2\min\lbrace\alpha;1\rbrace}\|f\|_{L^2(\cM)} \quad \text{where} \quad C_{\alpha}(h)=\quad \begin{cases}
			%				|\log h|, & \alpha = 1, \\
			%				1, & \alpha \neq 1. 
			%				\end{cases}
		%	\end{align*}
	for any $p\in [0,1]$ such that $\Vert \cL^pf\Vert_{L^2(\cM)}<\infty$, we have
	\begin{align*}
		\|\gamma(\cL_h)P_h f-\gamma(\cL)f &\|_{L^2(\cM)}	 \lesssim C_{\alpha+p}(h) h^{2\min\lbrace\alpha+p;1\rbrace}\|\cL^p f\|_{L^2(\cM)},
	\end{align*}
	where $C_{\alpha+p}(h)=\vert \log h\vert$ if $\alpha+p\le1$, and $C_{\alpha+p}(h)=1$ otherwise.
\end{proposition}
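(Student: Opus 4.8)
The plan is to represent both $\gamma(\cL)$ and $\gamma(\cL_h)P_h$ through a holomorphic (Dunford--Taylor) functional calculus and to reduce the error to a contour integral of a resolvent difference; this is exactly the setting in which the holomorphy and decay of $\gamma$ on $H_{\pi/2}$ can be exploited without ever comparing individual eigenfunctions of $\cL$ and $\cL_h$. Since the spectra of $\cL$ and, by \Cref{lem:ev_bound}, of $\cL_h$ all lie in $[\delta,\infty)$, I would fix a contour $\Gamma\subset H_{\pi/2}$ made of two rays $\{re^{\pm i\theta}:r\ge r_0\}$ with $\theta\in(0,\pi/2)$ and $r_0<\delta$, closed by a small arc around the origin and oriented to wind once counterclockwise around $[\delta,\infty)$. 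Writing $R(z,\cL)\coloneqq(z-\cL)^{-1}$, the bound $|\gamma(z)|\le C_\gamma|z|^{-\alpha}$ together with $\operatorname{dist}(z,\sigma(\cL))\sim|z|$ on $\Gamma$ makes $\frac{1}{2\pi i}\int_\Gamma\gamma(z)R(z,\cL)f\dd z$ convergent, and a term-by-term Cauchy integral over the eigenvalues identifies it with $\gamma(\cL)f$; the same contour and the same argument apply to $\cL_h$. Subtracting yields $\gamma(\cL_h)P_hf-\gamma(\cL)f=\frac{1}{2\pi i}\int_\Gamma\gamma(z)\big(R(z,\cL_h)P_h-R(z,\cL)\big)f\dd z$, hence $\|\gamma(\cL_h)P_hf-\gamma(\cL)f\|_{L^2(\cM)}\lesssim\int_\Gamma|z|^{-\alpha}\,\|(R(z,\cL_h)P_h-R(z,\cL))f\|_{L^2(\cM)}\,|\dd z|$.

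The core of the proof is a sharp, $z$-dependent finite element estimate for the resolvent difference, which is purely a Galerkin error on $\cM$: with $u\coloneqq R(z,\cL)f$ solving $z(u,v)_{L^2(\cM)}-\mathsf{A}_{\cM}(u,v)=(f,v)_{L^2(\cM)}$, the element $u_h\coloneqq R(z,\cL_h)P_hf\in S_h^\ell$ is precisely its Galerkin approximation in $S_h^\ell$ for the shifted sesquilinear form, so Galerkin orthogonality holds. I would aim for the key bound $\|(R(z,\cL_h)P_h-R(z,\cL))f\|_{L^2(\cM)}\lesssim h^{2\rho}|z|^{\rho-1-p}\|\cL^pf\|_{L^2(\cM)}$ for $z\in\Gamma$ and $\rho\in[0,1]$, assembled from three ingredients: (i) the regularity estimates $\|u\|_{L^2(\cM)}\lesssim|z|^{-1}\|\cL^pf\|_{L^2(\cM)}$ and $\|\cL u\|_{L^2(\cM)}\lesssim|z|^{-p}\|\cL^pf\|_{L^2(\cM)}$, which follow from the spectral representation of $u$ and the elementary bound $|z|+\lambda_i\lesssim|z-\lambda_i|$ valid on $\Gamma$; (ii) the surface finite element interpolation and C\'ea estimates in $H^1(\cM)$ together with an Aubin--Nitsche duality argument in $L^2(\cM)$ for the shifted form, where the norm equivalences \Cref{eq:norm_equiv} and $\operatorname{dist}(z,\sigma(\cL))\sim|z|$ are used to keep the Galerkin constants uniform in $z$; and (iii) operator interpolation in $\rho$ between the convergence endpoint $\rho=1$ and the stability endpoint $\rho=0$. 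The endpoint $\rho=0$ is where I expect the main difficulty: it asserts that the resolvent difference gains the full factor $|z|^{-1-p}$ from the regularity of the data, which is strictly better than the $|z|^{-1}$ one gets from bounding the two resolvents separately (indeed $\|u\|_{L^2(\cM)}$ itself only decays like $|z|^{-1}$). Proving it requires exploiting the smoothing of the discrete resolvent through the duality argument, i.e.\ the fact that the low-frequency part of $u$ is resolved essentially exactly by $S_h^\ell$; handling this under limited regularity ($\alpha+p$ possibly below $1$), and with the surface-specific interpolation bounds, is the technical heart of the argument.

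Finally I would insert the key bound into the contour integral and choose $\rho$ adaptively in $z$. With $|\dd z|\sim\dd r$ along the rays the integrand is, up to constants, $h^{2\rho}r^{\rho-1-p-\alpha}$, which for $h^2r<1$ is minimized by taking $\rho$ as large as admissible and for $h^2r>1$ by taking $\rho=0$. Splitting $\Gamma$ at $r=h^{-2}$, using $\rho=\bar\rho\coloneqq\min\{\alpha+p,1\}$ on the bulk and $\rho=0$ on the tail gives $\int_{r_0}^{h^{-2}}h^{2\bar\rho}r^{\bar\rho-1-p-\alpha}\dd r+\int_{h^{-2}}^{\infty}r^{-1-p-\alpha}\dd r\lesssim C_{\alpha+p}(h)\,h^{2\min\{\alpha+p,1\}}$. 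The tail integral is $\lesssim h^{2(\alpha+p)}\le h^{2\min\{\alpha+p,1\}}$, while in the bulk the exponent $\bar\rho-1-p-\alpha$ equals $-1$ exactly when $\alpha+p\le1$, producing the logarithmic factor $|\log h|$, and is strictly less than $-1$ when $\alpha+p>1$, producing a clean $h^2$; the arc near the origin contributes a bounded term of the same or lower order. Multiplying by $\|\cL^pf\|_{L^2(\cM)}$ then gives the claimed estimate, with the remaining routine points being the justification of the Dunford--Taylor representation on the unbounded contour and the surface interpolation bounds, both of which rest on the framework of \Cref{sec:det} and on \Cref{lem:ev_bound}.
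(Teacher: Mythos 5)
Your overall strategy is the same as the paper's: represent $\gamma(\cL)$ and $\gamma(\cL_h)$ by a Dunford--Taylor integral over a sectorial contour, reduce the error to a contour integral of a resolvent difference, prove an $h$- and $z$-dependent Galerkin estimate for that difference, and split the contour at $|z|\sim h^{-2}$ with an adaptive choice of the smoothing exponent (your $\rho$, the paper's $\beta$), obtaining the logarithm exactly when the bulk exponent degenerates to $-1$. The paper's version of your key bound is \Cref{lem:lemma_error1}, proved by the factorization $(z-\cL_h)^{-1}\cL_h\big(P_h\cL^{-1}-\cL_h^{-1}P_h\big)\cL(z-\cL)^{-1}$ rather than by interpolation between endpoints.

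There is, however, a genuine gap in the key estimate as you state it. You work with the difference $R(z,\cL_h)P_h-R(z,\cL)$ and claim $\|(R(z,\cL_h)P_h-R(z,\cL))f\|_{L^2(\cM)}\lesssim h^{2\rho}|z|^{\rho-1-p}\|\cL^pf\|_{L^2(\cM)}$ for all $\rho\in[0,1]$, and you then use the endpoint $\rho=0$ on the tail $|z|>h^{-2}$. That endpoint is false for $p>0$: take $f=e_1$, so that $(I-P_h)R(z,\cL)f=(z-\lambda_1)^{-1}(I-P_h)e_1$ has norm $\sim|z|^{-1}\|(I-P_h)e_1\|$ as $|z|\to\infty$, with $\|(I-P_h)e_1\|>0$ fixed for fixed $h$; this is not $O(|z|^{-1-p})$, and the remaining ($P_h$-sandwiched) part of the difference is small, so no cancellation rescues it. The extra $|z|^{-p}$ decay cannot be extracted from the data regularity at $\rho=0$ because the term $(I-P_h)R(z,\cL)f$ sees no smoothing at all there; your proposed duality argument therefore cannot close this endpoint, which you yourself identify as the technical heart. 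The paper avoids this in two ways: it first peels off $(I-P_h)\gamma(\cL)f$ and bounds it directly by the Bramble--Hilbert lemma (\Cref{lem:bh}) applied to $\widehat\gamma(\cL)\cL^pf$ with $\widehat\gamma(\lambda)=\gamma(\lambda)\lambda^{-p}$ an $(\alpha+p)$-\psd, and it states \Cref{lem:lemma_error1} only for the $P_h$-sandwiched difference under the constraint $p\le(1+\beta)/2$, using $\beta=p$ (not $\beta=0$) on the tail. The gap is repairable within your framework --- restrict to $\rho\ge p$ (the tail with $\rho=p$ gives $h^{2p}\int_{h^{-2}}^\infty r^{-1-\alpha}\dd r\lesssim h^{2(\alpha+p)}$, the same order) and either insert $P_h$ in front of $R(z,\cL)$ while treating $(I-P_h)\gamma(\cL)f$ separately, or handle $(I-P_h)R(z,\cL)f$ inside the integral via Bramble--Hilbert with exponent in $[p,1]$ --- but as written the stated range of $\rho$ and the tail estimate built on $\rho=0$ do not hold.
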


To prove this proposition we rely on a representation of functions of operators based on Cauchy--Stieltjes integrals, which are constructed as follows. Since the sesquilinear form defined by $\mathsf{A}_{\cM}$ is continuous and coercive, the associated operators $\cL$ and $\cL_h$ are sectorial with some (common) angle $\theta \in (0,\pi/2)$ \cite[Theorem 2.1]{Yagi2010}. Therefore, the spectra of $\cL$ and $\cL_h$ are contained in the complement of the set $G_{\theta} = \{z \in \C, \theta \leq \arg(z) \leq \pi\}$, as illustrated in \Cref{fig:Gtheta}, and the following inequalities are satisfied for any $z\in G_\theta$ (cf. \cite[Equation (2.2)]{Yagi2010}):
\begin{align}
	\|(z-\cL)^{-1}v\|_{L^2(\cM)} & \leq C_\theta |z|^{-1}\|v\|_{L^2(\cM)}, \quad v\in L^2(\cM),\label{eq:A4}\\
	\|(z-\cL_h)^{-1}v_h^\ell\|_{L^2(\cM)} & \leq C_\theta |z|^{-1}\|v_h^\ell\|_{L^2(\cM)}, \quad v_h^\ell\in S_h^\ell,\label{eq:A5}
\end{align}
where $C_\theta>0$ is a generic constant. Note in particular that by definition, $G_\theta$ is contained in the resolvent sets of $\cL$ and $\cL_h$, and that any \psd $\gamma$ is bounded, holomorphic and satisfies the inequality $| z |^{\alpha}| \gamma(z)| \lesssim 1$ for any $z\in G_\theta$. Hence, the operators $\gamma(\cL)$ and $\gamma(\cL_h)$ can be defined as functional calculi of the operators $\cL$ and $\cL_h$ as \cite[Chapter 16, Section 1.2]{Yagi2010} by 
\begin{align}\label{eq:int_rep_operator}
	\gamma(\cL) = \frac{1}{2\pi i} \int_{\Gamma} \gamma(z) (z-\cL)^{-1} \dd z
	\quad \text{and} \quad
	\gamma(\cL_h) = \frac{1}{2\pi i} \int_{\Gamma} \gamma(z)(z-\cL_h)^{-1} \dd z,
\end{align}
where $\Gamma \subset \C$ is any integral contour surrounding the spectra of $\cL$ and $\cL_h$ and contained in $G_\theta$.
In particular, these new definitions of functions of operators are independent of the choice of $\Gamma$, and coincide with the spectral definitions previously introduced in \Cref{eq:def_func_of_op} (cf. e.g. \cite[Remark 2.7]{Yagi2010}).

In the remainder, we split the contour $\Gamma$ into
\begin{align}\label{def:Gamma}
	\Gamma = \Gamma_+\cup\Gamma_0\cup\Gamma_-,
\end{align}
where $\Gamma_+$ is parametrized by $g_+(t) = te^{i\theta}$ for $t \in (\infty,\delta_0]$, $\Gamma_0$ by $g_0(t) = \delta_0 e^{it}$ for $t \in [\theta,-\theta]$ and $\Gamma_-$ by $g_-(t) = te^{i\theta}$ for $t \in [\delta_0,\infty)$ with $\delta_0 <\delta/2$ (see \Cref{fig:nicecontour} for an illustration).  We use this contour to prove \Cref{prop:det}, while relying on the following bounds for the resolvent error along $\Gamma$. The proof is included in \Cref{app:detproofs_err} and is an adaption of results from \cite{Fujita1991,Bonito2024,bonito2016}.

\begin{lemma}	\label{lem:lemma_error1}
	There exists some constant such that for any $h\in(0,h_0)$,  for any $z \in \Gamma$, any $f \in L^2(\cM)$, 
	%	, and $\beta\in [0,1]$,
	%		\begin{align*}
		%			\|(z-\cL_h)^{-1}P_h f-(z-\cL)^{-1} f\|_{L^2(\cM)} \lesssim h^{2\beta} |z|^{-(1-\beta)} \Vert f\Vert_{L^2(\cM)}. 
		%		\end{align*}
	for any $p\in [0,1]$ such that $\Vert \cL^pf\Vert_{L^2(\cM)}<\infty$, 
	%	\begin{align}
		%		\|(z-\cL_h)^{-1}P_h f-P_h(z-\cL)^{-1} f\|_{L^2(\cM)} \lesssim  h^{2} |z|^{-p} \Vert \cL^pf\Vert_{L^2(\cM)}.
		%	\end{align}
	for any $\beta\in[0,1]$ such that $p\in [0,(1+\beta)/2]$, 
	\begin{align}\label{eq:lp}
		\|(z-\cL_h)^{-1}P_h f-P_h(z-\cL)^{-1} f\|_{L^2(\cM)} \lesssim  h^{2\beta}|z|^{-(1+p-\beta)} \Vert \cL^pf\Vert_{L^2(\cM)}.
	\end{align}
\end{lemma}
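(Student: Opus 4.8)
The plan is to read the bound as a finite element error estimate for the family of shifted problems $(z-\cL)u=f$. Throughout I write $u=(z-\cL)^{-1}f$ and $u_h=(z-\cL_h)^{-1}P_h f\in S_h^\ell$, so the quantity to estimate is $\|\phi_h\|_{L^2(\cM)}$ with $\phi_h\coloneqq u_h-P_h u\in S_h^\ell$. The decisive structural observation is that $\cL_h$ is the Galerkin operator attached to the \emph{same} form $\mathsf{A}_\cM$ on the conforming subspace $S_h^\ell\subset H^1(\cM)$: thus $u$ satisfies $z(u,v)_{L^2(\cM)}-\mathsf{A}_\cM(u,v)=(f,v)_{L^2(\cM)}$ for all $v\in H^1(\cM)$, and $u_h$ satisfies the same identity restricted to $v=v_h\in S_h^\ell$ (using $(P_h f,v_h)_{L^2(\cM)}=(f,v_h)_{L^2(\cM)}$). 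Writing $B_z(\cdot,\cdot)\coloneqq\mathsf{A}_\cM(\cdot,\cdot)-z(\cdot,\cdot)_{L^2(\cM)}$ and subtracting, Galerkin orthogonality yields the key identity
\[
B_z(\phi_h,v_h)=\mathsf{A}_\cM((I-P_h)u,v_h),\quad v_h\in S_h^\ell,
\]
where the $z(\cdot,\cdot)_{L^2(\cM)}$ contribution of $(I-P_h)u$ drops because $(I-P_h)u\perp S_h^\ell$ in $L^2(\cM)$.

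Two ingredients feed this identity. First, sectoriality: since $z\in\Gamma\subset G_\theta$ and $\mathsf{A}_\cM$ is coercive and continuous, a short computation with $\Re z$ and $\Im z$ gives $|B_z(w,w)|\gtrsim\|w\|_{H^1(\cM)}^2+|z|\,\|w\|_{L^2(\cM)}^2$ together with the matching continuity bound. Second, resolvent regularity: using the spectral representation of \Cref{prop:spectral}, the sector estimate $|z-\lambda_i|\gtrsim|z|+\lambda_i$, and $\lambda_i\ge\lambda_1>0$, one obtains $\|\cL^{s}(z-\cL)^{-1}f\|_{L^2(\cM)}\lesssim|z|^{s-p-1}\|\cL^p f\|_{L^2(\cM)}$ when $s-p\in[0,1]$, and the cruder $\lesssim|z|^{-1}\|\cL^p f\|_{L^2(\cM)}$ when $0\le s\le p$; the analogues for $\cL_h$ follow from \eqref{eq:A5}. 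Combined with the norm equivalences \eqref{eq:norm_equiv}, the $H^1$-stability of $P_h$ on the quasi-uniform mesh, and the standard approximation estimates $\|(I-P_h)w\|_{H^1(\cM)}\lesssim h^{\sigma}\|w\|_{H^{1+\sigma}(\cM)}$ and $\|(I-P_h)w\|_{L^2(\cM)}\lesssim h^{2\sigma}\|w\|_{H^{2\sigma}(\cM)}$ for $\sigma\in[0,1]$, these control every projection error below in terms of $\|\cL^p f\|_{L^2(\cM)}$ and powers of $|z|$.

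For the low range I would use an energy estimate: testing the identity with $v_h=\phi_h$ and invoking sectoriality gives $\|\phi_h\|_{L^2(\cM)}\lesssim|z|^{-1/2}\|(I-P_h)u\|_{H^1(\cM)}$. Taking approximation order $\sigma=2\beta$ with the corresponding regularity shift produces exactly $h^{2\beta}|z|^{-(1+p-\beta)}\|\cL^p f\|_{L^2(\cM)}$ for $\beta\le 1/2$, and — with the capped order $\sigma=1$ — a bound within the target whenever $|z|\ge h^{-2}$, covering the large-$|z|$ part of the high range. For the remaining regime $\beta>1/2$ with $|z|\le h^{-2}$ I would run an Aubin–Nitsche duality: writing $\|\phi_h\|_{L^2(\cM)}=\sup_{\|g\|_{L^2(\cM)}=1}|(\phi_h,g)_{L^2(\cM)}|$ and solving the adjoint problem $\chi=(\bar z-\cL)^{-1}g$, the identity above propagates (up to conjugation) to $(\phi_h,g)_{L^2(\cM)}=-\mathsf{A}_\cM((I-P_h)u,P_h\chi)-\mathsf{A}_\cM(\phi_h,(I-P_h)\chi)$. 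Bounding both terms with the \emph{full}-order ($h^2$) estimates and the $H^2$-regularity of $u$ and $\chi$ gives $\|\phi_h\|_{L^2(\cM)}\lesssim h^{2}|z|^{-p}\|\cL^p f\|_{L^2(\cM)}$, and since $h^2|z|\le1$ on this part of $\Gamma$ and $\beta\le1$, this is dominated by $h^{2\beta}|z|^{-(1+p-\beta)}\|\cL^p f\|_{L^2(\cM)}$. Splitting $\Gamma$ at $|z|=h^{-2}$ and combining the two regimes yields the claimed uniform bound.

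The main obstacle is precisely the regime of large spectral parameter, $|z|\gtrsim h^{-2}$, combined with the sub-range $p>\beta$ permitted by the hypothesis $p\le(1+\beta)/2$. There the finite element space cannot resolve the fast decay of $(z-\cL)^{-1}$, so best approximation alone is insufficient; the offending contribution is the bare projection-error pairing $((I-P_h)u,g)_{L^2(\cM)}$ generated by the duality, whose $|z|$-behaviour is the worst. Controlling it forces the dichotomy at $|z|\sim h^{-2}$ and a careful trade between powers of $h$ and powers of $|z|$ (exploiting $h^2|z|\lessgtr 1$), which is exactly the technical core adapted from \cite{Fujita1991,Bonito2024,bonito2016}. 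Keeping all constants independent of both $h$ and $z$ through this balancing is the delicate point; the rest is bookkeeping with the spectral bounds of the second step.
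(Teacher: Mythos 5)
Your proposal is correct in outline but takes a genuinely different route from the paper. The paper never views $(z-\cL)u=f$ as a parametrized boundary value problem: it uses the purely algebraic factorization
\begin{equation*}
(z-\cL_h)^{-1}P_h-P_h(z-\cL)^{-1}=(z-\cL_h)^{-1}\cL_h\big(P_h\cL^{-1}-\cL_h^{-1}P_h\big)\cL(z-\cL)^{-1},
\end{equation*}
so that the entire finite element content is concentrated in the single \emph{unshifted} error operator $P_h\cL^{-1}-\cL_h^{-1}P_h$, for which the fractional-order estimate \eqref{eq:err_fem} (imported from \cite{bonito2016}) gives $h^{2\beta}$ directly; the powers of $|z|$ then come from distributing fractional powers of $\cL$ and $\cL_h$ onto the two outer resolvents via \eqref{eq:Ls}--\eqref{eq:Lhs}. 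This yields the full range $\beta\in[0,1]$, $p\in[0,(1+\beta)/2]$ in one computation, with no splitting of $\Gamma$. Your route — Galerkin orthogonality for $B_z=\mathsf{A}_{\cM}-z(\cdot,\cdot)_{L^2(\cM)}$ on the conforming subspace $S_h^\ell$, a sectorial energy estimate, and Aubin--Nitsche duality with a dichotomy at $|z|\sim h^{-2}$ — is the classical FEM-for-resolvents argument (closer in spirit to \cite{Fujita1991}). It is more self-contained (it does not need \eqref{eq:err_fem} as a black box, and in the range $\beta\le 1/2$ it even proves the bound under the weaker hypothesis $p\le 1/2+\beta$), but it pays for this with the case analysis and with explicit reliance on $H^2$-regularity of $\cL$ and $H^1$-stability plus $H^{1+\sigma}$-approximation of $P_h$, all of which must be justified in the surface setting. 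Your bookkeeping of exponents checks out in all three regimes.

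One step is under-justified: in the duality regime you cannot bound the first term $\mathsf{A}_{\cM}\big((I-P_h)u,P_h\chi\big)$ ``with the full-order $h^2$ estimates'' as stated, since continuity of the form against $\|P_h\chi\|_{H^1(\cM)}$ only yields a single factor of $h$ from $\|(I-P_h)u\|_{H^1(\cM)}$. To extract $h^2$ you must first rewrite it, e.g.\ as
\begin{equation*}
\mathsf{A}_{\cM}\big((I-P_h)u,P_h\chi\big)=\big((I-P_h)u,(I-P_h)\cL\chi\big)_{L^2(\cM)}-\mathsf{A}_{\cM}\big((I-P_h)u,(I-P_h)\chi\big),
\end{equation*}
and then use $\|(I-P_h)u\|_{L^2(\cM)}\lesssim h^2\|\cL u\|_{L^2(\cM)}$ together with $\|\cL\chi\|_{L^2(\cM)}\lesssim\|g\|_{L^2(\cM)}$ (from \eqref{eq:A4}). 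You clearly sense where the difficulty sits — your remark about the ``bare projection-error pairing'' is exactly this term after expanding $\cL\chi=\bar z\chi-g$ — but the argument as written skips the manipulation that makes the second power of $h$ appear. With that step made explicit, the proof goes through.
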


We now provide a proof for \Cref{prop:det}. 

\begin{figure}
	\centering
	
	\begin{minipage}{0.5\textwidth}
		\centering
		\subfigure[Illustration of~$G_{\theta}$ being the complement of the shaded blue slice.]{
			\begin{tikzpicture}
				\def\xmax{4.0}
				\def\ymax{2.6}
				\def\R{2.5}
				\def\RR{4.0}
				\def\ang{18}
				\coordinate (O) at (0,0);
				\coordinate (R) at (\ang:\R);
				\coordinate (RR) at (\ang:\RR);
				\coordinate (-R) at (-\ang:\R);
				\coordinate (-RR) at (-\ang:\RR);
				\coordinate (X) at ({2.0*\R*cos(\ang)},0);
				\coordinate (lambda1) at ({0.7*\R*cos(\ang)},0);
				\coordinate (lambda2) at ({1.0*\R*cos(\ang)},0);
				\coordinate (lambda3) at ({1.5*\R*cos(\ang)},0);
				\coordinate (Y) at (0,{2.0*\R*sin(\ang)});
				\coordinate (-Y) at (0,{-2.0*\R*sin(\ang)});
				\node (R') at (R) {};
				\node (RR') at (RR) {};
				\node (-R') at (-R) {};
				\draw[->,line width=0.9] (-0.3*\xmax,0) -- (\xmax+0.05,0) node[right] {Re};
				\draw[->, line width=0.9] (0,-0.3*\ymax) -- (0,0.4*\ymax+0.05) node[left] {Im};
				\draw[line width = 1.2] (O) -- (R');
				\draw[name path=B,dotted, line width = 1.2] (O) -- (RR)  node[pos=0.55,above left=-2] {$\partial G_{\theta}$};
				\draw[line width = 1.2] (O) -- (-R') ;
				\draw[name path=A, dotted, line width = 1.2] (O) -- (-RR) node[pos=0.55,below left=-2] {$\partial G_{\theta}$};
				\tikzfillbetween[of=A and B]{blue, opacity=0.1};
				
				\draw pic[->,"$~\theta$",black,draw=black,angle radius=23,angle eccentricity=1.24]
				{angle = X--O--R};
				\draw pic[<-,"$-\theta$"{right=-2},black,draw=black,angle radius=20,angle eccentricity=1]
				{angle = -R--O--X};
				\tick{lambda1}{90} node[scale=1,above=3] {$\lambda_1$};
				\tick{lambda2}{90} node[scale=1,above=3] {$\lambda_2$};
				\tick{lambda3}{90} node[scale=1,above=3] {$\lambda_3$};
			\end{tikzpicture}
			\label{fig:Gtheta} }
	\end{minipage}%
	\begin{minipage}{0.5\textwidth}
		\centering
		\subfigure[Illustration of the contour $\Gamma = \Gamma_+ \cup \Gamma_- \cup \Gamma_0$ used in the proof of \Cref{prop:det}.]{
			\begin{tikzpicture}
				\def\xmax{4.0}
				\def\ymax{2.6}
				\def\delt{0.5}
				\def\R{2.5}
				\def\RR{4.0}
				\def\ang{18}
				\coordinate (O) at (0,0);
				\coordinate (del) at (\ang:\delt);
				\coordinate (-del) at (-\ang:\delt);
				\coordinate (R) at (\ang:\R);
				\coordinate (RR) at (\ang:\RR);
				\coordinate (-R) at (-\ang:\R);
				\coordinate (-RR) at (-\ang:\RR);
				\coordinate (X) at ({2.0*\R*cos(\ang)},0);
				\coordinate (lambda1) at ({0.7*\R*cos(\ang)},0);
				\coordinate (lambda2) at ({1.0*\R*cos(\ang)},0);
				\coordinate (lambda3) at ({1.5*\R*cos(\ang)},0);
				\coordinate (Y) at (0,{2.0*\R*sin(\ang)});
				\coordinate (-Y) at (0,{-2.0*\R*sin(\ang)});
				\node (R') at (R) {};
				\node (RR') at (RR) {};
				\node (-R') at (-R) {};
				\draw[->,line width=0.9] (-0.3*\xmax,0) -- (\xmax+0.05,0) node[right] {Re};
				\draw[->, line width=0.9] (0,-0.3*\ymax) -- (0,0.4*\ymax+0.05) node[left] {Im};
				\draw[line width = 1.2] (del) -- (R');
				\draw[name path=B,dotted, line width = 1.2] (O) -- (RR)  node[pos=0.55,above left=-2] {$\Gamma_+$};
				\draw[line width = 1.2] (-del) -- (-R') ;
				\draw[name path=A, dotted, line width = 1.2] (O) -- (-RR) node[pos=0.55,below left=-2] {$\Gamma_-$};
				
				\draw[name path=C,->,line width = 1.2](\ang:\delt) arc[radius=\delt, start angle=\ang, end angle=-\ang] node[pos=0.55,above = 7] {$\Gamma_0$};
				\draw[dotted,line width = 1.2] (0,0) circle (\delt);
				\draw[->,line width = 1.2] (0,0) -- (-120:\delt) node[scale = 0.75, left] {$\delta_0 $};
				\tikzfillbetween[of=A and B]{blue, opacity=0.1};
				
				\draw pic[->,"$~\theta$",black,draw=black,angle radius=23,angle eccentricity=1.24]
				{angle = X--O--R};
				\draw pic[<-,"$-\theta$"{right=2},black,draw=black,angle radius=20,angle eccentricity=1]
				{angle = -R--O--X};
				\tick{lambda1}{90} node[scale=1,above=3] {$\lambda_1$};
				\tick{lambda2}{90} node[scale=1,above=3] {$\lambda_2$};
				\tick{lambda3}{90} node[scale=1,above=3] {$\lambda_3$};
			\end{tikzpicture}
			\label{fig:nicecontour}}
	\end{minipage}%
	\caption{Contours used to define the Cauchy--Stieltjes integral representation of operators.} 
	\label{fig:contours}
\end{figure}	

\begin{proof}[Proof of \Cref{prop:det}]
	Let  $h\in(0,h_0)$, $f\in L^2(\cM)$ and let $p\in [0,1]$ such that  $\Vert \cL^pf\Vert_{L^2(\cM)}<\infty$. We  can therefore introduce $u=\cL^pf\in L^2(\cM)$. First, note using the triangle inequality, we have
	\begin{align*}
		\Vert\gamma(\cL_h)P_h f-\gamma(\cL)f\Vert_{L^2(\cM)} &\le \Vert\gamma(\cL_h)P_h f-P_h\gamma(\cL)f\Vert_{L^2(\cM)}\\
		&\quad +\Vert P_h\gamma(\cL)f-\gamma(\cL)f\Vert_{L^2(\cM)} = S_1+S_2
	\end{align*}
	where we take $S_1= \Vert\gamma(\cL_h)P_h f-P_h\gamma(\cL)f\Vert_{L^2(\cM)}$ and $S_2=\Vert P_h\gamma(\cL)f-\gamma(\cL)f\Vert_{L^2(\cM)}$. We now bound these two terms. 
	
	For the term $S_2$, let us introduce the function $\widehat{\gamma}$ defined as $\widehat{\gamma}(\lambda)=\gamma(\lambda)\lambda^{-p}$. In particular, note that since $\gamma$ is an $\alpha$-\psd, $\widehat{\gamma}$ is an $(\alpha+p)$-\psd, and we have, by definition of $u$, 
	\begin{equation*}
		S_2 = \Vert (I-P_h)\gamma(\cL)f\Vert_{L^2(\cM)}=\Vert (I-P_h)\widehat{\gamma}(\cL)u\Vert_{L^2(\cM)}
	\end{equation*}
	We then use the Bramble--Hilbert lemma (cf. \Cref{lem:bh}) with $t=2\min\lbrace\alpha+p;1\rbrace \in (0,2]$, while noting that, following \Cref{rem:regf}, $\Vert \cL^{t/2} (\widehat{\gamma}(\cL)u)\Vert_{L^2(\cM)}\lesssim \Vert u\Vert_{L^2(\cM)}<\infty$ (since $t/2 \le \alpha+p$):
	\begin{equation*}
		S_2 \lesssim h^t \Vert \cL^{t/2} (\widehat{\gamma}(\cL)u)\Vert_{L^2(\cM)} \lesssim h^t \Vert u\Vert_{L^2(\cM)} = h^{2\min\lbrace\alpha+p;1\rbrace} \Vert \cL^pf\Vert_{L^2(\cM)}.
	\end{equation*}

	Let us now bound the term $S_1$. 
	For any $z\in\Gamma$, set $\cF_h(z) = (z-\cL_h)^{-1}P_hf -P_h(z-\cL)^{-1}f$, which yields, using the integral representations of $\gamma(\cL)$ and $\gamma(\cL_h)$,
	\begin{align*}
		\gamma(\cL_h)P_h f-P_h\gamma(\cL)f &= \frac{1}{2\pi i} \int_{\Gamma} \gamma(z) \cF_h(z)\dd z.
	\end{align*}
	The definition of $\Gamma$ and its parametrization allow to decompose the integral as
	\begin{align*}
		\gamma(\cL_h)P_h f-P_h\gamma(\cL)f &= \frac{-1}{2\pi i} \int_{\delta_0}^\infty \gamma(g_+(t)) \cF_h(g_+(t))g'_+(t) \dd t \\
		&\quad - \frac{1}{2\pi i} \int_{-\theta}^{\theta} \gamma(g_0(t)) \cF_h(g_0(t))g_0'(t) \dd t
		\\
		&\quad+ \frac{1}{2\pi i} \int_{\delta_0}^\infty \gamma(g_-(t)) \cF_h(g_-(t))g'_-(t)\dd t,
	\end{align*}
	where we recall that $g_+'(t) = e^{i\theta}, ~g_-'(t)=e^{-i\theta}$ and $g_0'(t) = i \delta_0 e^{it}$. Taking norms on both sides of this equality and using the triangle inequality then gives
	\begin{align*}
		& S_1=\|\gamma(\cL_h)P_h f-P_h\gamma(\cL)f \|_{L^2(\cM)}\\	 
		& \quad \leq  \frac{1}{2\pi } \int_{\delta_0}^\infty |\gamma(g_+(t))| \left\|\cF_h(g_+(t))\right\|_{L^2(\cM)} \dd t
		+ \frac{\delta_0}{2\pi} \int_{-\theta}^{\theta} |\gamma(g_0(t))| \|\cF_h(g_0(t))\|_{L^2(\cM)} \dd t\\
		& \qquad + \frac{1}{2\pi}\int_{\delta_0}^\infty |\gamma(g_-(t))| \left\|\cF_h(g_-(t))\right\|_{L^2(\cM)} \dd t\\
		& \quad = I_+ + I_0 + I_-.
	\end{align*}
	
	Let us start by bounding $I_0$. 
	We apply \Cref{eq:fundamental_ineq} and \Cref{eq:lp} (with $\beta=1$) to obtain
	\begin{align*}
		I_0 
		&\lesssim   \int_{-\theta}^{\theta} |g_0(t)|^{-\alpha}~h^2  |g_0(t)|^{-p} \Vert \cL^pf\Vert_{L^2(\cM)} \dd t 
		= h^2 \|\cL^pf\|_{L^2(\cM)}\int_{-\theta}^{\theta} \delta_0^{-(\alpha+p)} \dd t \\
		&\lesssim h^2 \|\cL^pf\|_{L^2(\cM)}. 
	\end{align*}
	
	To bound $I_+$, we distinguish between the three cases $\alpha +p>1$, $\alpha+p <1$ and $\alpha+p =1$. 
	When $\alpha+p > 1$, we apply \Cref{eq:fundamental_ineq} and \Cref{eq:lp} (with $\beta=1$) to get
	\begin{align*}
		I_+	
		&\lesssim h^2 \|\cL^pf\|_{L^2(\cM)}  \int_{\delta_0}^\infty |g_+(t)|^{-(\alpha+p)} \dd t 
		= h^2  \|\cL^pf\|_{L^2(\cM)} \int_{\delta_0}^\infty |t|^{-(\alpha+p)} \dd t\\ 
		&= h^2 \|\cL^pf\|_{L^2(\cM)} \frac{\delta_0^{-(\alpha+p-1)}}{\alpha+p-1}.
	\end{align*}
	Hence, we can conclude that if $\alpha + p>1$, we have $I_+ \lesssim  h^{2} \|\cL^pf\|_{L^2(\cM)}$.
	
	For $\alpha+p<1$, recall that $0<h <h_0 < 1$ for some fixed $h_0$ such that $h_0^{-2} > \delta_0$. We then introduce $\tilde{\alpha}=(\alpha+p)/\alpha$ and  split $I_+=I_{+1}+I_{+2}$ with
	\begin{align*}
		&I_{+1}	 =  \int_{\delta_0}^{h^{-2\tilde{\alpha}}}   |\gamma(g_+(t))|\,  \left\|\cF_h(g_+(t))\right\|_{L^2(\cM)} \dd t, \\
		&I_{+2} =   \int_{h^{-2\tilde{\alpha}}}^\infty   |\gamma(g_+(t))|\,  \left\|\cF_h(g_+(t))\right\|_{L^2(\cM)} \dd t.
	\end{align*}
	Let then $e_h=(1-(\alpha+p))/\vert\log h\vert \in (0,1-(\alpha+p))$ and $\beta = \alpha + p + e_h \in (0,1)$. Note in particular that $p\in[0,(1+\beta)/2]$ since $(1+\beta)/2-p=(1-p+\alpha+e_h)/2>0$. 
	We can therefore use \Cref{eq:fundamental_ineq} and \Cref{eq:lp} to bound $I_{+1}$ by
	\begin{align*}
		I_{+1}	
		&\lesssim h^{2\beta} \|\cL^pf\|_{L^2(\cM)} \int_{\delta_0}^{h^{-2\tilde{\alpha}}} |g_+(t)|^{-(\alpha+1+p-\beta)} \dd t 
		=h^{2\beta} \|\cL^pf\|_{L^2(\cM)} \int_{\delta_0}^{h^{-2\tilde{\alpha}}} |t|^{-(1-e_h)} \dd t \\
		&\lesssim h^{2\beta} \|\cL^pf\|_{L^2(\cM)} \int_{0}^{h^{-2\tilde{\alpha}}} |t|^{-(1-e_h)} \dd t 
		= h^{2\beta} \|\cL^pf\|_{L^2(\cM)} \frac{h^{-2\tilde{\alpha}e_h}}{e_h} \\
		&= h^{2(\alpha+p)} \|\cL^pf\|_{L^2(\cM)} \frac{h^{-2(\tilde{\alpha}-1)e_h}}{e_h}%\lesssim h^{2(\alpha+p)} \|\cL^pf\|_{L^2(\cM)},
		\lesssim  h^{2(\alpha+p)} \vert \log h\vert \|\cL^pf\|_{L^2(\cM)},
	\end{align*}
	since $h^{-2\tilde{\alpha}e_h}=\exp(2\tilde{\alpha}(1-(\alpha+p)))\lesssim 1$.
	We proceed in the same manner to bound $I_{+2}$, using this time \Cref{eq:lp} (with $\beta =p$):  
	\begin{align*}
		I_{+2} 
		&\lesssim h^{2p} \|\cL^pf\|_{L^2(\cM)}  \int_{h^{-2\tilde{\alpha}}}^\infty   |g_+(t)|^{-(\alpha+1)} \dd t 
		= h^{2p}\|\cL^pf\|_{L^2(\cM)}  \int_{h^{-2\tilde{\alpha}}}^\infty | t|^{-(\alpha+1)} \dd t \\
		&\lesssim h^p h^{2\tilde{\alpha}\alpha} \|\cL^pf\|_{L^2(\cM)}
		\lesssim h^{2(\alpha+p)} \|\cL^pf\|_{L^2(\cM)}.
	\end{align*}
	Hence, we  conclude that if $\alpha + p<1$, we have $I_+ \lesssim  \vert\log h\vert~h^{2(\alpha+p)} \|\cL^pf\|_{L^2(\cM)}$.
	
	Finally, for $\alpha +p=1$ we repeat the same approach as for the case $\alpha+p<1$. On the one hand we use  \Cref{eq:lp} (with $\beta =1$) to get for the term $I_{+1}$ the bound
	\begin{align*}
		I_{+1}	
		&\lesssim h^2 \|\cL^pf\|_{L^2(\cM)} \int_{\delta_0}^{h^{-2\tilde{\alpha}}} |t|^{-1} \dd t 
		= h^2\|\cL^p f\|_{L^2(\cM)} (\log(h^{-2\tilde{\alpha}})-\log\delta_0 )\\
		& \lesssim h^2\|\cL^p f\|_{L^2(\cM)} \vert\log h\vert,
	\end{align*}
	where we used the fact that $h$ is upper-bounded by a fixed constant $h_0$ satisfying $h_0^{-2} > \delta_0$ to derive the last inequality. On the other hand, we once again use \Cref{eq:lp} with $\beta =p$  to bound the term $I_{+2}$ 
	\begin{align*}
		I_{+2} 
		&\lesssim
		h^{2p}\|\cL^pf\|_{L^2(\cM)}  \int_{h^{-2\tilde{\alpha}}}^\infty | t|^{-(\alpha+1)} \dd t
		\lesssim h^p h^{2\tilde{\alpha}\alpha} \|\cL^pf\|_{L^2(\cM)}
		\\
		&\lesssim h^{2(\alpha+p)} \|\cL^pf\|_{L^2(\cM)}=h^{2} \|\cL^pf\|_{L^2(\cM)}.
	\end{align*}
	Hence, we have $I_{+} \lesssim h^2 |\log h|~\|\cL^pf\|_{L^2(\cM)}$ when $\alpha+p=1$. 
	
	Putting together the three cases, we can therefore conclude that
	$$I_+\lesssim  C_{\alpha+p}(h) h^{2\min\lbrace\alpha+p;1\rbrace} \|\cL^pf\|_{L^2(\cM)},$$
	where $ C_{\alpha+p}(h)=\vert\log h\vert$ if $\alpha+p\le 1$, and  $ C_{\alpha+p}(h)>1$ if $\alpha+p\neq 1$.
	Finally, note that by symmetry, $I_-$ satisfies the same bounds as $I_+$, meaning that we also get
	$I_-\lesssim  C_{\alpha+p}(h) h^{2\min\lbrace\alpha+p;1\rbrace} \|\cL^pf\|_{L^2(\cM)}$.
	
	In conclusion, putting together the estimates for $I_0$, $I_+$ and $I_-$ we get 
	$$S_1 \lesssim  C_{\alpha+p}(h) h^{2\min\lbrace\alpha+p;1\rbrace} \|\cL^pf\|_{L^2(\cM)},$$
	and putting together the estimates pf $S_1$ and $S_2$ yields
	$$\Vert\gamma(\cL_h)P_h f-\gamma(\cL)f\Vert_{L^2(\cM)} \lesssim  C_{\alpha+p}(h) h^{2\min\lbrace\alpha+p;1\rbrace} \|\cL^pf\|_{L^2(\cM)},$$
	which concludes the proof.
\end{proof}

A result similar to \Cref{prop:det} can be derived to quantify the error between functions of the operators $\cL_h$ and $\sL_h$. To do so, we note that since $\mathsf{A}_{\cM_h}$ is continuous and coercive, the associated operator $\sL_h$ is also sectorial with some angle in $(0,\pi/2)$. Hence, without loss of generality, the angle $\theta \in (0,\pi/2)$ can be assumed to be large enough to ensure  that \Cref{eq:A5} also holds for $\sL_h$, i.e. that for any $z\in G_\theta$,
\begin{align}
	\|(z-\sL_h)^{-1}v_h\|_{L^2(\cM_h)} & \leq C_\theta |z|^{-1}\|v_h\|_{L^2(\cM_h)}, \quad v_h\in S_h,\label{eq:A5bis}
\end{align}
and that an integral representation similar to \Cref{eq:int_rep_operator} also holds for $\gamma(\sL_h)$, namely:
\begin{align*}
	\gamma(\sL_h)=\frac{1}{2\pi i } \int_{\Gamma} \gamma(z)(z-\sL_h)^{-1} \dd z.
\end{align*}

\begin{proposition}
	Let $\gamma$ be an $\alpha$-\psd with $\alpha>d/4$. Then, there exists some constant such that   for any $h\in(0,h_0)$, for any $\tilde{f}\in S_h^\ell$, 
	\begin{equation}
		\label{eq:err_pol_ld}
		\left\| \left(\gamma(\cL_h)\tilde{f}\right)^{-\ell} - \gamma(\sL_h)\sP_h(\sigma\tilde{f}^{-\ell})\right\Vert_{L^2(\cM_{h})}
		\lesssim  h^2 \, \| \cL_h^{-\min\lbrace \alpha+d/4;1\rbrace/2}\tilde{f}\Vert_{L^2(\cM)}.
	\end{equation}
	\label{lem:error_polyhedral}
\end{proposition}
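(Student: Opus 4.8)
The plan is to mirror the Cauchy--Stieltjes strategy used for \Cref{prop:det}, but now comparing the two discrete operators $\cL_h$ on $S_h^\ell$ and $\sL_h$ on $S_h$, whose spectra are already close by \Cref{lem:ev_bound}. The natural bridge between the two spaces is the map $\Phi\colon S_h^\ell\to S_h$, $\Phi(v)=\sP_h(\sigma v^{-\ell})$, which by \eqref{eq:sigma} and the projection property of $\sP_h$ satisfies the identity $(\Phi(v),w_h)_{L^2(\cM_h)}=(v,w_h^\ell)_{L^2(\cM)}$ for every $w_h\in S_h$. Since $\gamma$ is bounded and holomorphic on $G_\theta$ and both resolvent bounds \eqref{eq:A5} and \eqref{eq:A5bis} hold, the (linear) inverse lift commutes with the contour integral \eqref{eq:int_rep_operator}, so that
\[
(\gamma(\cL_h)\tilde f)^{-\ell}-\gamma(\sL_h)\sP_h(\sigma\tilde f^{-\ell})=\frac{1}{2\pi i}\int_\Gamma\gamma(z)\,\cG_h(z)\,\dd z, \qquad \cG_h(z)=\big((z-\cL_h)^{-1}\tilde f\big)^{-\ell}-(z-\sL_h)^{-1}\Phi(\tilde f).
\]
Everything thus reduces to a resolvent-difference estimate along $\Gamma$, and there is no projection term analogous to $S_2$ in \Cref{prop:det}, because both objects are already discrete.

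The core step is to prove, for $s=\min\{\alpha+d/4;1\}/2$, the bound $\|\cG_h(z)\|_{L^2(\cM_h)}\lesssim h^2|z|^{s-1}\|\cL_h^{-s}\tilde f\|_{L^2(\cM)}$ uniformly for $z\in\Gamma$, which I would obtain from a weak formulation. Writing $u_h=(z-\cL_h)^{-1}\tilde f\in S_h^\ell$, $w_h=(z-\sL_h)^{-1}\Phi(\tilde f)\in S_h$, and $\theta_h=u_h^{-\ell}-w_h\in S_h$, testing $(z-\sL_h)\theta_h$ against $w_h'\in S_h$ and eliminating $\tilde f=(z-\cL_h)u_h$ through $(\tilde f,(w_h')^\ell)_{L^2(\cM)}=z(u_h,(w_h')^\ell)_{L^2(\cM)}-\mathsf{A}_\cM(u_h,(w_h')^\ell)$ together with the defining identity of $\Phi$, the right-hand side collapses to two geometric consistency errors,
\[
((z-\sL_h)\theta_h,w_h')_{L^2(\cM_h)}=z\,E_1-E_2,
\]
where $E_1=(u_h^{-\ell},w_h')_{L^2(\cM_h)}-(u_h,(w_h')^\ell)_{L^2(\cM)}$ and $E_2=\mathsf{A}_{\cM_h}(u_h^{-\ell},w_h')-\mathsf{A}_\cM(u_h,(w_h')^\ell)$. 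By \eqref{eq:sigma} we have $E_1=\int_{\cM_h}(1-\sigma)u_h^{-\ell}\overline{w_h'}\dd A_h$, so the standard area-element estimate $\|1-\sigma\|_{L^\infty(\cM_h)}\lesssim h^2$ gives $|E_1|\lesssim h^2\|u_h\|_{L^2(\cM)}\|w_h'\|_{L^2(\cM_h)}$, while the classical SFEM geometric estimates for the diffusion and potential parts of the two bilinear forms (for smooth $\cD$ and bounded $V$, as in \cite{Dziuk2013,Bonito2024}) yield $|E_2|\lesssim h^2\|u_h\|_{H^1(\cM)}\|w_h'\|_{H^1(\cM_h)}$.

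Because $E_1$ pairs $w_h'$ in $L^2$ while $E_2$ pairs it in $H^1$, I would split $(z-\sL_h)\theta_h=\rho^{(1)}-\rho^{(2)}$ with $\|\rho^{(1)}\|_{L^2(\cM_h)}\lesssim|z|h^2\|u_h\|_{L^2(\cM)}$ and, using $\|w_h'\|_{H^1(\cM_h)}\sim\|\sL_h^{1/2}w_h'\|_{L^2(\cM_h)}$ from \eqref{eq:norm_equiv}, the discrete $H^{-1}$ bound $\|\sL_h^{-1/2}\rho^{(2)}\|_{L^2(\cM_h)}\lesssim h^2\|u_h\|_{H^1(\cM)}$. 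Since $\sL_h$ is self-adjoint and positive definite, the sectorial spectral bounds $|z-\lambda|^{-1}\lesssim|z|^{-1}$ and $\lambda^{1/2}|z-\lambda|^{-1}\lesssim|z|^{-1/2}$ on $G_\theta$ give $\|(z-\sL_h)^{-1}\|_{L^2\to L^2}\lesssim|z|^{-1}$ and $\|(z-\sL_h)^{-1}\sL_h^{1/2}\|_{L^2\to L^2}\lesssim|z|^{-1/2}$, whence $\|\theta_h\|_{L^2(\cM_h)}\lesssim h^2\big(\|u_h\|_{L^2(\cM)}+|z|^{-1/2}\|u_h\|_{H^1(\cM)}\big)$. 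Writing $u_h=(z-\cL_h)^{-1}\cL_h^{s}(\cL_h^{-s}\tilde f)$ and applying the same sectorial bounds for $\cL_h$ with $\|u_h\|_{H^1(\cM)}\sim\|\cL_h^{1/2}u_h\|_{L^2(\cM)}$ then yields $\|u_h\|_{L^2(\cM)}\lesssim|z|^{s-1}\|\cL_h^{-s}\tilde f\|_{L^2(\cM)}$ and $\|u_h\|_{H^1(\cM)}\lesssim|z|^{s-1/2}\|\cL_h^{-s}\tilde f\|_{L^2(\cM)}$ (valid for $s\le1/2$), which combine to the asserted estimate for $\cG_h(z)$.

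Inserting this into the contour integral and using $|\gamma(z)|\lesssim|z|^{-\alpha}$ from \eqref{eq:fundamental_ineq} leaves $\int_\Gamma|z|^{-\alpha+s-1}\,|\dd z|$, which converges precisely when $s<\alpha$; a direct check shows that $s=\min\{\alpha+d/4;1\}/2$ satisfies both $s<\alpha$ (using $\alpha>d/4$) and $s\le1/2$, so the integral along $\Gamma_\pm$ is finite, while the small arc $\Gamma_0$ contributes only a bounded factor, giving the claimed bound by a constant times $h^2\|\cL_h^{-s}\tilde f\|_{L^2(\cM)}$. The main obstacle is the resolvent-difference estimate itself: one must carry the two geometric consistency errors in the correct dual norms ($E_1$ in $L^2$, $E_2$ in discrete $H^{-1}$) and track the $|z|$-powers uniformly, which is exactly what forces the two-norm mapping of $(z-\sL_h)^{-1}$ and the spectral calibration $s\le1/2$; the borderline case $s=1/2$ (occurring when $\alpha+d/4\ge1$) relies on the strict inequality $\alpha>d/4$ to keep $\int_\Gamma|z|^{-\alpha-1/2}\,|\dd z|$ convergent.
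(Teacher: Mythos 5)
Your proposal is correct, and it reaches the same per-$z$ resolvent estimate $\lesssim h^2|z|^{p/2-1}\|\cL_h^{-p/2}\tilde f\|_{L^2(\cM)}$ with $p=\min\{\alpha+d/4;1\}$ that the paper obtains, after which the contour integration and the integrability check $\alpha-p/2\ge(\alpha-d/4)/2>0$ coincide with the paper's. The route to that estimate is genuinely different, though. The paper works purely algebraically at the operator level: it telescopes $\mathcal{P}_\ell(z-\cL_h)^{-1}-(z-\sL_h)^{-1}\sP_h\sigma\mathcal{P}_\ell$ into two pieces, one governed by the projection error $(I-\sP_h\sigma)\mathcal{P}_\ell$ (bounded by $h^2$ via a geometric estimate quoted from Bonito et al.) and one governed by the $z$-independent comparison of the discrete solution operators, $\|(\cL_h^{-1}v_h)^{-\ell}-\sL_h^{-1}\sP_h(\sigma v_h^{-\ell})\|_{H^1(\cM_h)}\lesssim h^2\|\cL_h^{-1/2}v_h\|_{L^2(\cM)}$ (its \Cref{eq:B}, itself proven from the geometric consistency of the bilinear forms); the $|z|$-decay is then recovered by sandwiching with the resolvent mapping properties \Cref{eq:A2,eq:A3,eq:Lhs}. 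You instead compare the two resolvents directly at each $z$ through their weak formulations, so the geometric errors appear as $zE_1-E_2$ with explicit $z$-weights, and you recover the decay by reading $E_1$ in $L^2$ and $E_2$ in the discrete $H^{-1}$ norm before applying $\|(z-\sL_h)^{-1}\|\lesssim|z|^{-1}$ and $\|(z-\sL_h)^{-1}\sL_h^{1/2}\|\lesssim|z|^{-1/2}$. Your version is more self-contained (it needs only the area-element estimate $\|1-\sigma\|_{L^\infty(\cM_h)}\lesssim h^2$ and the geometric consistency of $\mathsf{A}_\cM$ versus $\mathsf{A}_{\cM_h}$, which is the paper's \Cref{lem:geometric-consistency-A} and is \emph{not} classical here because of the diffusion matrix $\cD$ — do cite it rather than ``standard SFEM estimates''), at the price of having to calibrate the exponent constraint $s\le 1/2$ by hand; the paper's modular version reuses its intermediate lemmas elsewhere and keeps the $z$-dependence confined to standard resolvent bounds. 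Both arguments are sound.
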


This proposition can be seen as an extension of \cite[Lemma 4.4]{Bonito2024} relying on extensions of the estimates proven in \cite[Lemma A.1]{Bonito2024}. Its proof is similar and therefore postponed to \Cref{app:error_discr_op}.

\section{Stochastic theory: random fields on surfaces and convergence of SFEM approximation}
\label{sec:stoch}

In this section, we introduce random fields and white noise on surfaces, thus allowing us to make sense of \Cref{eq:fielddef} in \Cref{sec:intro}. 
Further, we give approximation methods based on SFEM and prove strong error bounds.  
\subsection{Random fields on surfaces}
\label{sec:rf}

Let $(\Omega, \cS, \IP)$ be a complete probability space. 
We are interested in Gaussian random fields on $\cM$ defined as $L^2(\cM)$-valued random variables through expansions of the form
\begin{align}
	\label{eq:KL}
	\cZ = \sum_{i\in\N} Z_i e_i,
\end{align}
where $\lbrace e_i\rbrace_{i\in\N}$ denotes a real-valued orthonormal basis of $L^2(\cM)$ composed of eigenfunctions of the operator $\cL$ (cf. \Cref{prop:spectral}), and $\lbrace Z_i\rbrace_{i\in\N}$ is a sequence of real Gaussian random variables such that $\E[Z_i]=0$ for any $i\in\N$ and  $\sum_{i \in \N} \E[| Z_i|^2] < \infty$. As such, $\cZ$ can be seen as en element of the Hilbert space  $L^2(\Omega;L^2(\cM))$ of  $L^2(\cM)$-valued random variables, to which we associate the inner product $(\cdot,\cdot)_{L^2(\Omega;L^2(\cM))}$ (and  norm $\| \cdot\Vert_{L^2(\Omega;L^2(\cM))}$) defined by
\begin{align*}
	(\cZ,\cZ')_{L^2(\Omega;L^2(\cM))} = \E\left[(\cZ,\cZ')_{L^2(\cM)}\right], \quad \cZ,\cZ'\in L^2(\Omega;L^2(\cM)).
\end{align*}
Finally, in analogy to \Cref{eq:KL}, we formally define the Gaussian white noise on $\cM$ by the expansion
\begin{align}
	\label{eq:whitenoise}
	\cW = \sum_{i \in \N} W_i e_i,
\end{align}
where $\lbrace W_i\rbrace_{i\in\N}$ is a sequence of independent real standard Gaussian random variables. We observe that even though this expansion does not converge in $L^2(\Omega;L^2(\cM))$, it does however converge in $L^2(\cM; H^s(\cM))$ for $s < -d/2$. 
Moreover, we have that for any $\phi\in L^2(\cM)$, the expansion $( \phi,\cW)_{L^2(\cM)}=  \sum_{i \in \N} W_i (\phi,e_i )_{L^2(\cM)}$ converges in $L^2(\Omega)$. 
Further, $(\cW, \phi)_{L^2(\cM)}$ defines a complex Gaussian variable with mean $0$, and for any $\phi'\in L^2(\cM)$,  $\Cov((\phi,\cW)_{L^2(\cM)}, (\phi',\cW)_{L^2(\cM)})=\E[(\phi,\cW)_{L^2(\cM)}\cj{(\phi',\cW)_{L^2(\cM)}}]=(\phi, \phi')_{L^2(\cM)}$. 
As such, the Gaussian white noise~\eqref{eq:whitenoise} can be interpreted as a \emph{generalized Gaussian random field} over~$L^2(\cM)$.

Circling back to the class of random fields defined in \Cref{sec:intro}, we can now make sense of \Cref{eq:fielddef} through \Cref{eq:def_func_of_op} and \Cref{eq:whitenoise}, thus yielding the definition
\begin{equation}\label{eq:defZ}
	\mathcal{Z}=\gamma(\cL) \cW = \sum_{i\in\N}\gamma(\lambda_{i})W_ie_i, 
\end{equation}
which results in $\mathcal{Z}\in L^2(\Omega;H^{s}(\cM))$ for any $s\ge0$ such that $4\alpha -d > 2s$. Note in particular that all summands in \Cref{eq:defZ} are real-valued functions, and that therefore $\cZ$ is real-valued.
In \Cref{fig:maternex} we illustrate the influence of the parameter choices on the resulting field on~$\IS^2$ for generalized \emph{non-stationary Whittle--Mat\'ern} fields on~$\IS^2$
\begin{align*}
	\cZ = (\cL)^{-\alpha}	\cW,
\end{align*}
where $\alpha >d/2$. 
By selecting $D_{ij}(x) = \delta_{ij}$ and $V(x) = \kappa^2$ with $\kappa>0$ one recovers the classical, stationary Whittle--Mat\'ern fields studied in various settings in for instance \cite{Cox2020,BKK20,Bonito2024,Lindgren2011,JKL22,W63}.
In Figures~\ref{fig:maternlowreg} and~\ref{fig:maternhighreg} we show this case with $\kappa^2 = 10$ for a rougher field with $\alpha = 0.55$ and a smoother one with $\alpha = 1.5$, respectively.

Two non-stationary fields are shown in \Cref{fig:maternvarpot} and \Cref{fig:maternvarmet}, obtained by varying the coefficient functions $D_{ij}$ and $V$ and setting $\alpha = 0.75$. 
In \Cref{fig:maternvarpot}, we keep $D_{ij} = \delta_{ij}$ but use
\begin{align*}
	V(x) =
	\begin{cases}
		10^5 & \text{for } x_2^6+x_1^3-x_3^2 \in  (0.1,0.5),	 \\
		10    & \text{else}
	\end{cases}
\end{align*}
resulting in the observed localized behavior, where the field is essentially turned off in the region with large~$V$. 
More specifically, $V$ describes the local correlation length, where a large $V(x)$ corresponds to a small correlation length around~$x$. 

Finally, setting $V = 10$ constant again, we show the influence of varying parameters $\cD$ in \Cref{fig:maternvarmet}.
To derive suitable coefficients, we select a smooth function~$f$ and compute its gradient~$\nabla_{\IS^2} f$ as well as its skew-gradient~$X_f$ given by $x \times \nabla f(x)$ at each point $x \in \IS^2$. 
We set for fixed $\rho_1, \rho_2 > 0$, $\cD(x)u=\rho_1(\nabla f(x)\cdot u)\nabla f(x)+\rho_2(X_f(x)\cdot u)X_f(x)$ for any $u\in T_x\IS^2$. 
Here, the inner product refers to the Riemannian inner product associated with the standard round metric on $\IS^2$.  
Since $\nabla_{\IS^2} f(x) $ and $X_f(x)$ both are in $T_x \IS^2$, $\cD(x)$ is a linear mapping from $T_x \IS^2$ into itself. 
Further, as $\nabla_{\IS^2} f$ is perpendicular to $X_f(x)$, by selecting $\rho_1$ and $\rho_2$, we obtain a field that is elongated either orthogonally to the level sets of~$f$
(large $\rho_1$, small $\rho_2$) or tangentially to the level sets (small $\rho_1$, and large $\rho_2$). 
To generate \Cref{fig:maternvarmet}, we selected  $f(x)= x_2$, i.e., the function returning the second coordinate in Cartesian coordinates, $\rho_1 = 1$ and $\rho_2 = 25$.
\begin{figure}[tb]
	\subfigure[Stationary Whittle--Mat\'ern field with low decay parameter.]{
		\includegraphics[width=0.21\textwidth, trim=150mm 5mm 150mm 5mm, clip]{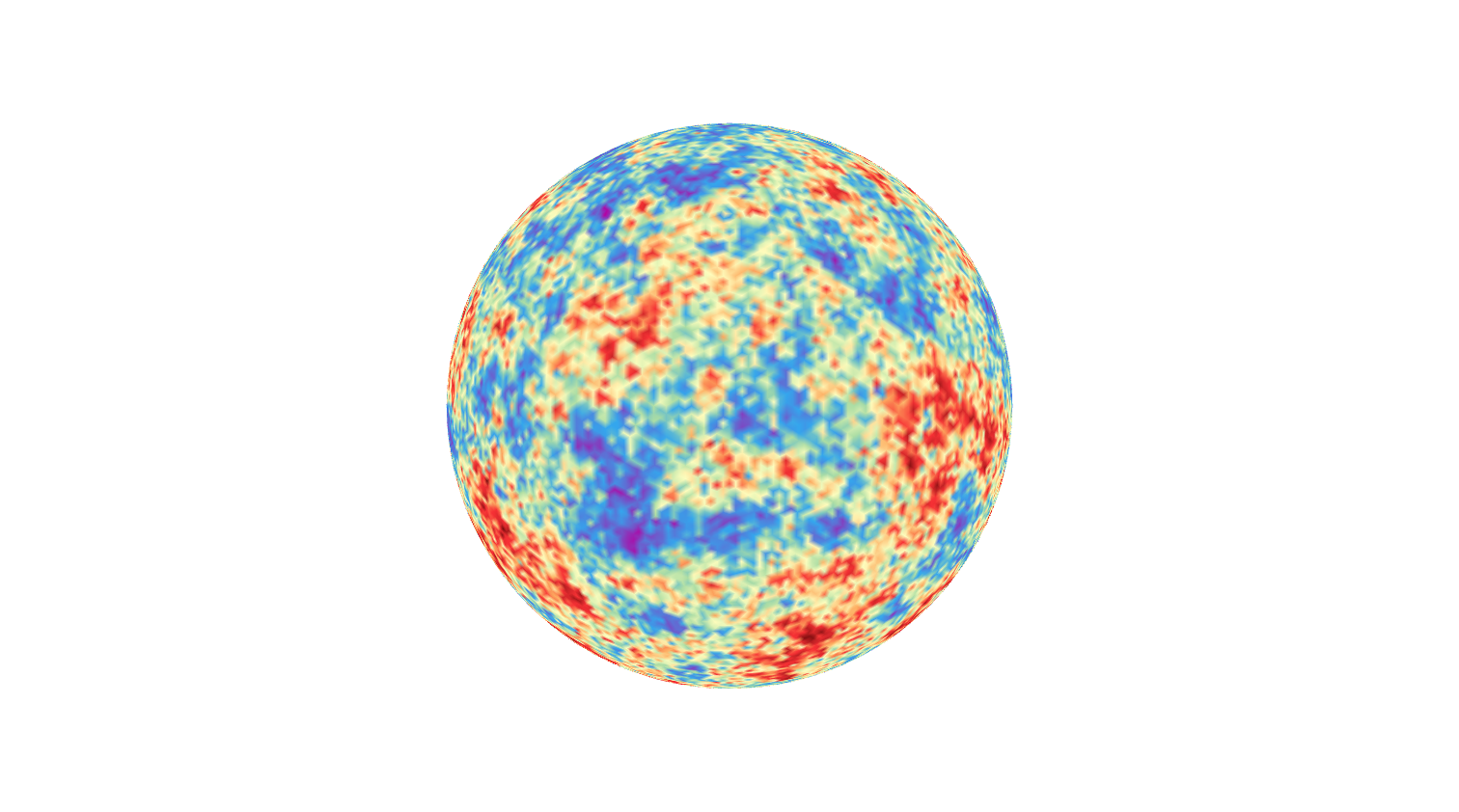}
		\label{fig:maternlowreg}
	}\hspace*{0.5em}
	\subfigure[Stationary Whittle--Mat\'ern field with high decay parameter.]{
		\includegraphics[width=0.21\textwidth, trim=150mm 5mm 150mm 5mm, clip]{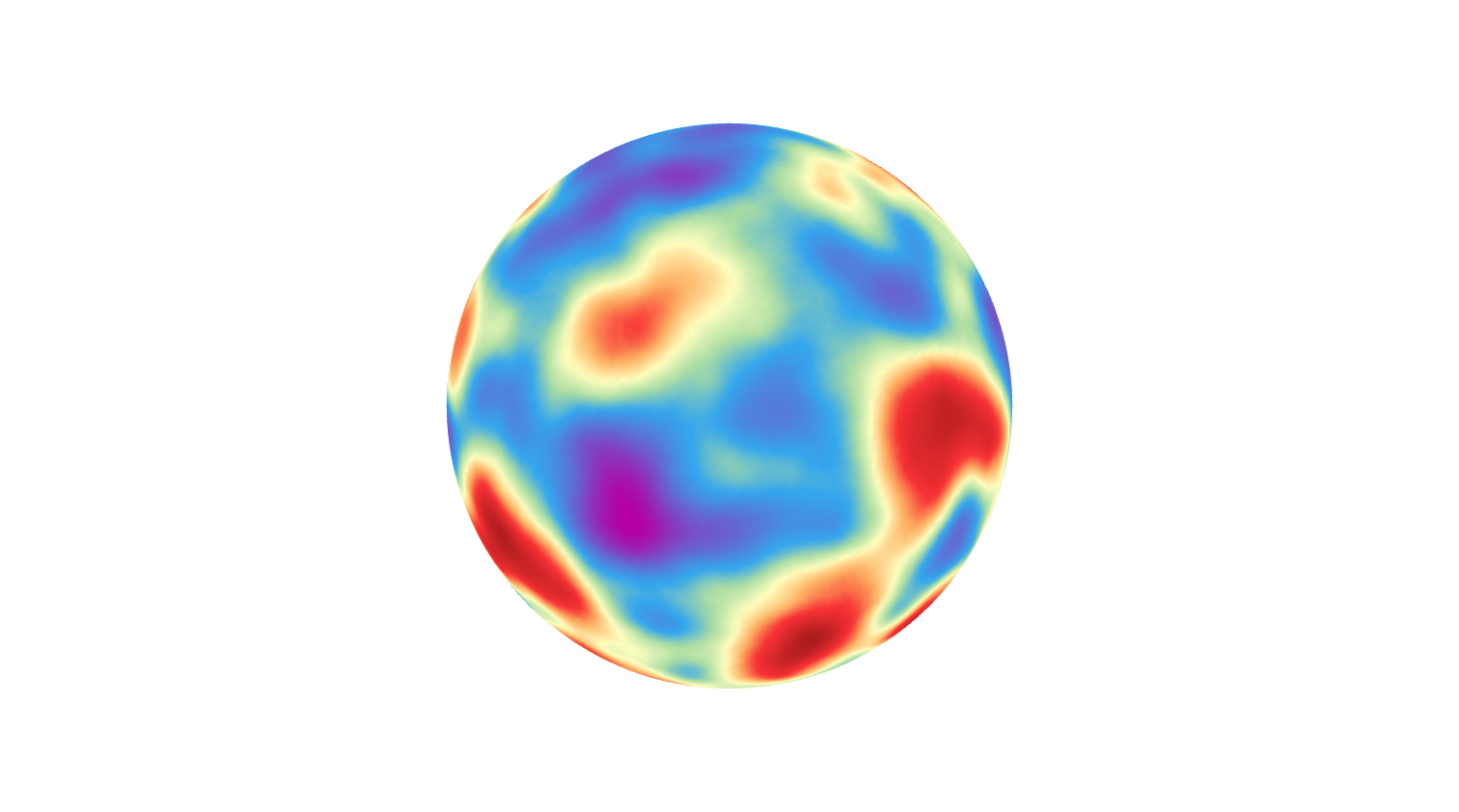}
		\label{fig:maternhighreg}
	}\hspace*{0.5em}
	\subfigure[Non-stationary Whittle--Mat\'ern field by varying potential.]{
		\includegraphics[width=0.21\textwidth, trim=150mm 5mm 150mm 5mm, clip]{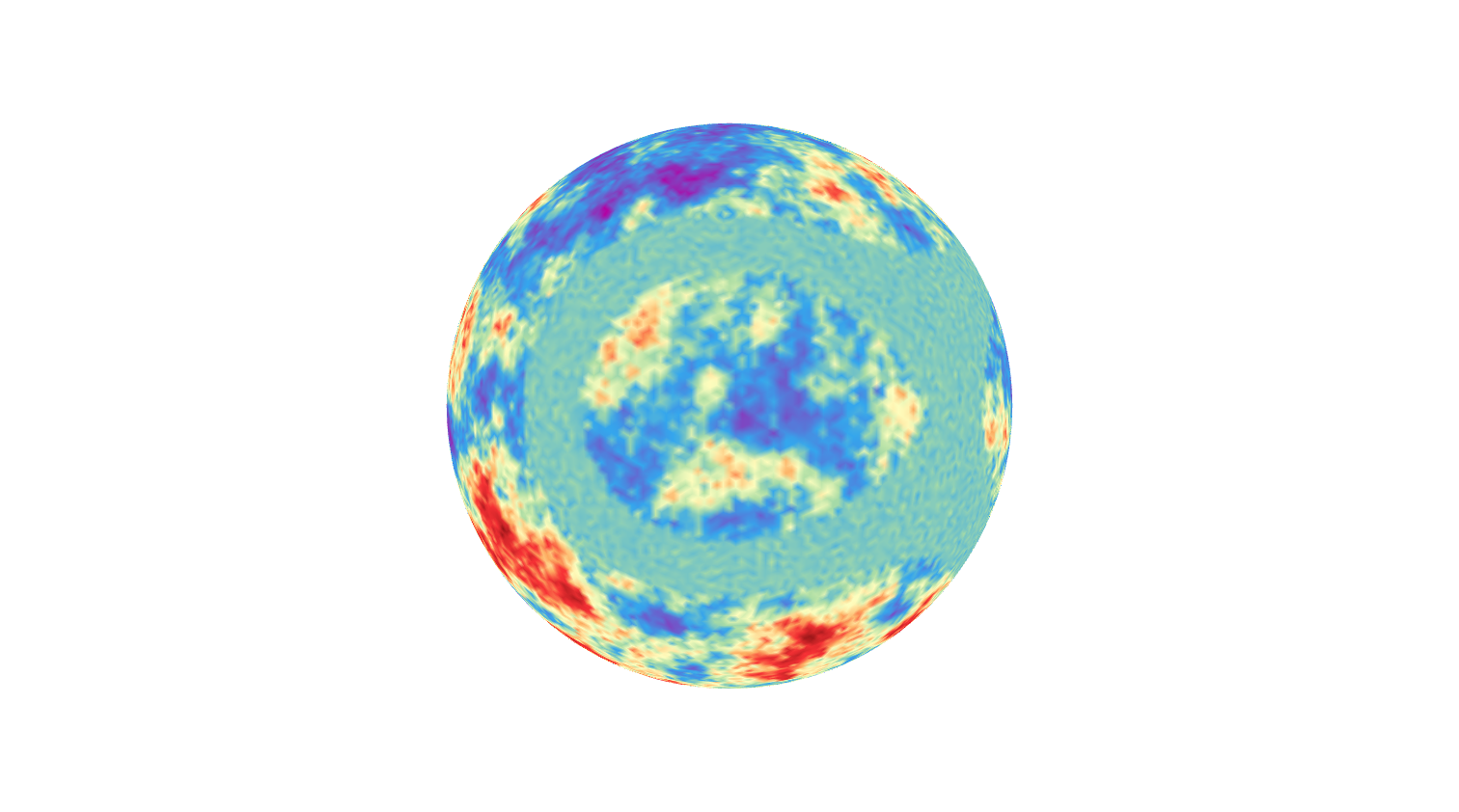}
		\label{fig:maternvarpot}
	}\hspace*{0.5em}
	\subfigure[Non-stationary Whittle--Mat\'ern field by varying diffusion matrix.]{
		\includegraphics[width=0.21\textwidth, trim=150mm 5mm 150mm 5mm, clip]{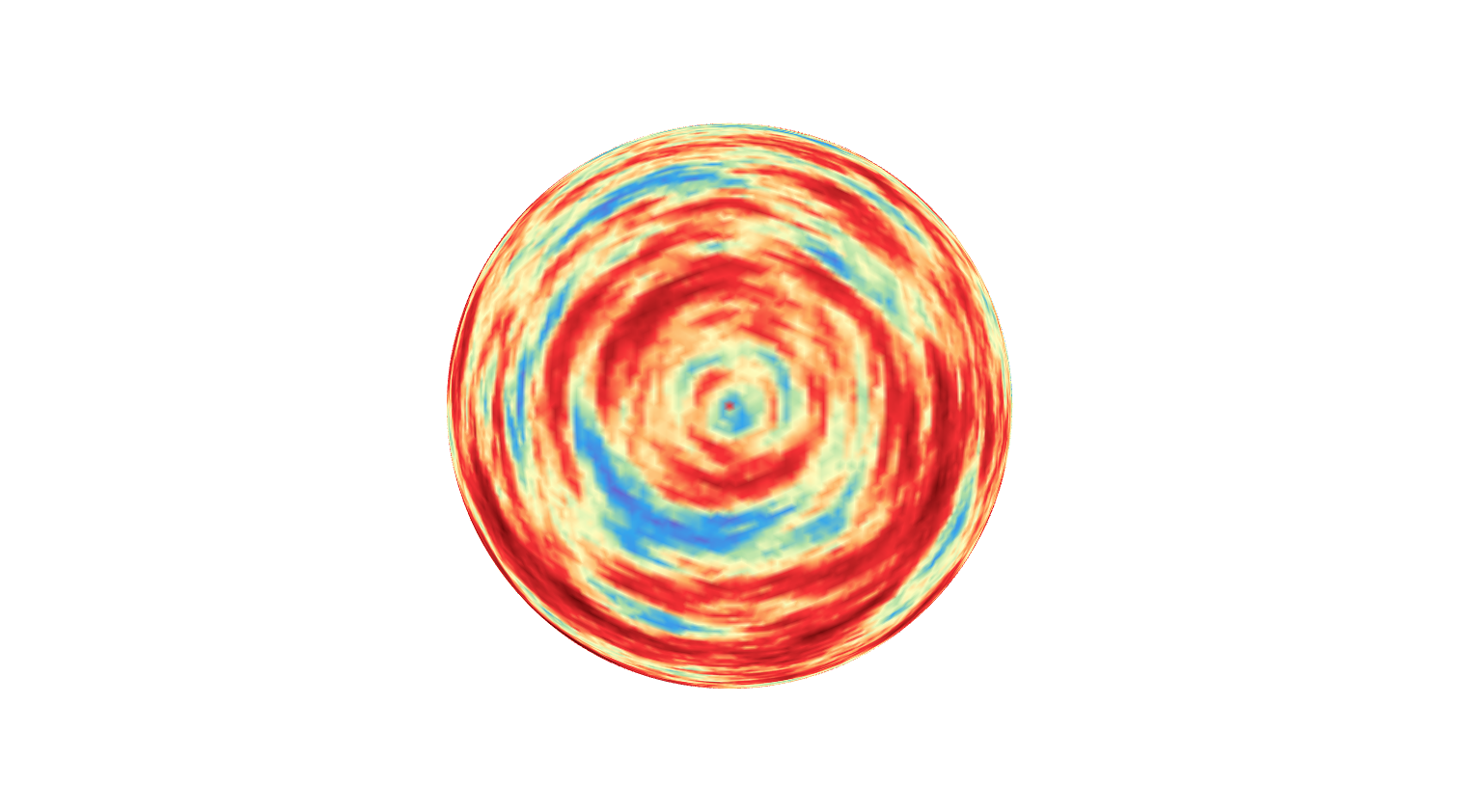}\label{fig:maternvarmet}}
	\caption{Some examples to highlight the influence of the different model parameters. \label{fig:maternex}
	}
\end{figure}
Finally, we remark that other types of \psds can be considered. For instance, to generate \Cref{fig:third_example}, we used an \psd of the form $\gamma(\lambda) = \sin(\lambda) \lambda^{-0.6}$.

\subsection{Approximation of random fields with surface finite elements}
\label{sec:method}

Let $\gamma$ be an $\alpha$-\psd with $\alpha>d/4$.
Following the approach presented by \cite{Lang2023}, the field $\cZ$ is approximated by an expansion similar to that of \Cref{eq:defZ}, but involving only quantities defined on the polyhedral surface $\cM_{h}$. 
More precisely, we define the SFEM--Galerkin approximation $\sZ_h$ of the field $\cZ$ by the relation 
\begin{equation}\label{eq:z_galer}
	\sZ_h=\sum_{i=1}^{N_h} \gamma(\Lambda_{i}^h)W_i^hE_{i}^h,
\end{equation}
where $\lbrace W_i^h \rbrace_{1\le i\le N_h}$ is a sequence of independent standard Gaussian random variables, whose precise definition is clarified later in this section, and $\lbrace (\Lambda_{i}^h, E_i^h)\rbrace_{1\le i\le N_h}$ are the eigenpairs of $\sL_h$ introduced in \Cref{sec:sfem}. 
Then, as proven in \cite[Theorem 3.4]{Lang2023}, $\sZ_h$ can be decomposed in the nodal basis $\lbrace \psi_i\rbrace_{1\le i\le N_h}$ of $S_h$ as 
\begin{equation*}
	\sZ_h =\sum_{i=1}^{N_h} Z_i\psi_i,
\end{equation*}
where the weights $(Z_1, \dots, Z_{N_h})$ form a centered Gaussian vector which covariance matrix $\bm\Sigma_{Z}$  can be expressed using the matrices $\bm C$ and $\bm S$ introduced in \Cref{eq:def_CG,eq:def_S} as follows:
\begin{equation}
	\bm\Sigma_{Z}=\big(\sqrt{\bm C}\big)^{-T}\,\gamma(\bm S)^2\,\big(\sqrt{\bm C}\big)^{-1}, 
	\label{eq:cov_weights}
\end{equation}
and, following \Cref{eq:ortho_S}, the function of matrix $\gamma(\bm S)$ is defined as 
\begin{equation}
	\gamma(\bm S)^2
	=
	\bm V
	\Diag\left(\gamma\big(\Lambda_{1}^h\big)^2 , \dots, \gamma\big(\Lambda_{1}^{N_h }\big)^2\right)
	\bm V^T.
\end{equation}

Note that sampling the weights $(Z_1, \dots, Z_{N_h})$, and therefore the field $\sZ_h$, using 
directly the expression of their covariance matrix requires  in practice to fully diagonalize  $\bm S$ (since \Cref{eq:cov_weights} involves a function of a matrix). Such an operation would result in a prohibitive computational cost (of order $\mathcal{O}(N_h^3)$ operations). 
To avoid this cost, we use the Chebyshev trick proposed by \cite[Section 4]{Lang2023}, and approximate $\sZ_h$ by the field $\sZ_{h,M}$ defined by 
\begin{equation}\label{eq:z_cheb}
	\sZ_{h,M}=\sum_{i=1}^{N_h} P_{\gamma,M}(\Lambda_{i}^h)W_i^h E_i^h,
\end{equation}
where $P_{\gamma, M}$ is a Chebyshev polynomial approximation of degree $M\in\N$ of $\gamma$ over an interval $[\lambda_{\min},\lambda_{\max}]$ containing all the eigenvalues $\lbrace\Lambda_{i}^h\rbrace_{1\le i\le N_h}$ of $\sL_h$ (which we recall, coincide with the eigenvalues of $\bm S$). 
Such an interval can be obtained as follows. On the one hand, one can take $\lambda_{\min}=V_-$. 
On the other hand, following \cite{Lang2023}, a candidate for $\lambda_{\max}$ is obtained by applying the Gershgorin circle theorem to $\bm S$. 

The field $\sZ_{h,M}$, called Galerkin--Chebyshev approximation of $\cZ$,  is in essence defined by just replacing the \psd $\gamma$  by the polynomial~ $P_{\gamma, M}$ in the definition of SFEM--Galerkin approximation $\sZ_{h}$. The expansion of $\sZ_{h,M}$ into the nodal basis,
\begin{equation}\label{eq:decomp_gc}
	\sZ_{h,M}=\sum_{i=1}^{N_h} Z_i^{(M)}\psi_i,
\end{equation}
is such that the weights $(Z_1^{(M)}, \dots, Z_{N_h}^{(M)})$ now form a centered Gaussian vector with covariance matrix $\bm\Sigma_{Z^{(M)}}$ given by
\begin{equation}
	\bm\Sigma_{Z^{(M)}}=\big(\sqrt{\bm C}\big)^{-T}\,P_{\gamma, M}^2(\bm S)\,\big(\sqrt{\bm C}\big)^{-1}.
\end{equation}
Hence, the matrix function in \Cref{eq:cov_weights} is now replaced by a matrix polynomial $P_{\gamma, M}^2(\bm S)$.
This eliminates the eigendecomposition need associated with matrix functions and therefore speeds up the computations. Indeed, the weights $(Z_1^{(M)}, \dots, Z_{N_h}^{(M)})$ can be sampled through
\begin{equation}\label{eq:compute_gc}
	\begin{psmallmatrix}
		Z_1^{(M)} \\
		\vdots\\
		Z_{N_h}^{(M)}
	\end{psmallmatrix}
	=\big(\sqrt{\bm C}\big)^{-T}P_{\gamma, M}(\bm S)\bm W
\end{equation}
where $\bm W = (w_1, \dots, w_{N_h})^T$ is a vector of independent standard Gaussian random variables, and the matrix-vector product by $P_{\gamma, M}(\bm S)$ can be computed iteratively while just requiring products between $\bm S$ and vectors. In the next two subsections, we provide error estimates quantifying the error between our target random field $\cZ$ and its successive SFEM and polynomial approximations by $\sZ_h$ and~$\sZ_{h,M}$.

\subsection{Error analysis of the SFEM discretization}
\label{sec:convergence}

We start with analyzing the error between the random field $\cZ$ defined on $\cM$ and its SFEM approximation $\sZ_h$, as stated in the next theorem.

\begin{theorem}
	Let $\gamma$ be an $\alpha$-\psd with $\alpha>d/4$.
	Then, for any $h\in(0,h_0)$,
	the strong approximation error of the random field $\mathcal{Z}$ by its discretization $\sZ_h^\ell$  satisfies the bound 
	\begin{equation}
		\label{eq:err_str}
		\|\cZ- \sZ_h^\ell\|_{L^2(\Omega;L^2(\cM))}   \leq  C_{\alpha}(h)h^{2\min\{\alpha-d/4;1\}},
	\end{equation}
	where  $C_\alpha(h)=\vert \log h\vert$ if $d/4<\alpha \le 1$, $C_\alpha(h)=\vert \log h\vert^{3/2}$ if $1<\alpha < 1+d/2$, and $C_\alpha(h)=\vert \log h\vert^{1/2}$ if $\alpha \ge 1+d/2$.
	\label{th:err_fem}
\end{theorem}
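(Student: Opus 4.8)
The plan is to compare $\cZ$ and $\sZ_h^\ell$ by expanding both in terms of the eigenfunctions of their respective operators and controlling the discrepancy through a combination of the deterministic estimate in \Cref{prop:det} and a tail-truncation argument. The crucial observation is that since $\cW$ is white noise with i.i.d.\ standard Gaussian coefficients, the squared $L^2(\Omega;L^2(\cM))$-norm decouples: writing $\cZ = \gamma(\cL)\cW$ and recalling that $\sZ_h = \gamma(\sL_h)\sP_h(\cdots)$ (equivalently $\sZ_h^\ell$ is built from $\gamma(\cL_h)$ applied to a projected white noise), the strong error can be written as a sum over the Gaussian coefficients. The orthonormality of the eigenbases and the independence of the $W_i$ reduce $\|\cZ - \sZ_h^\ell\|_{L^2(\Omega;L^2(\cM))}^2$ to a sum of deterministic operator-difference norms applied to basis functions, so the heart of the matter becomes estimating a Hilbert--Schmidt-type norm of $\gamma(\cL) - \gamma(\cL_h)P_h$ (and the associated polyhedral correction from \Cref{lem:error_polyhedral}).

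First I would decompose the error into two pieces via the triangle inequality: the genuine SFEM error $\gamma(\cL)\cW - \gamma(\cL_h)P_h\cW$ on $\cM$, and the discrepancy coming from transferring to the polyhedral surface (the difference between $\gamma(\cL_h)$ and $\gamma(\sL_h)$), the latter being controlled by \Cref{lem:error_polyhedral}. For the first piece, the idea is to test the deterministic bound of \Cref{prop:det} against each white-noise coefficient. Concretely, I expect to write the squared strong norm using the covariance structure as a trace, $\|\gamma(\cL)\cW - \gamma(\cL_h)P_h\cW\|_{L^2(\Omega;L^2(\cM))}^2 = \sum_i \|(\gamma(\cL) - \gamma(\cL_h)P_h)e_i\|_{L^2(\cM)}^2$, and then apply \Cref{prop:det} with $f = e_i$ and an appropriate choice of $p$. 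Since $\cL e_i = \lambda_i e_i$, one has $\|\cL^p e_i\|_{L^2(\cM)} = \lambda_i^p$, so the deterministic bound contributes $C_{\alpha+p}(h)^2 h^{4\min\{\alpha+p;1\}}\lambda_i^{2p}$ for each $i$. The remaining task is to sum these over $i$ and invoke the Weyl-type growth $\lambda_i \asymp i^{2/d}$ from \Cref{eq:gbounds_ev} to guarantee summability.

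The summation step is where the three regimes for $C_\alpha(h)$ and the logarithmic powers emerge, and this is the main obstacle. To make $\sum_i \lambda_i^{2p}\,h^{4\min\{\alpha+p;1\}}$ converge and produce the stated rate $h^{2\min\{\alpha-d/4;1\}}$, the free parameter $p$ must be chosen carefully: it must be small enough that $\sum_i \lambda_i^{2p} = \sum_i i^{4p/d}$ converges, which by \Cref{eq:gbounds_ev} requires $4p/d < -1$\,---\,but $p \ge 0$, so in fact summability of the \emph{full} series fails and one must instead balance the truncation. The genuine strategy is to split the index sum at a cutoff (analogous to the $I_{+1}/I_{+2}$ split in the proof of \Cref{prop:det}): for small eigenvalues use the full rate $h^{2\min\{\alpha+p;1\}}$, while for large eigenvalues exploit the intrinsic decay $|\gamma(\lambda_i)| \lesssim \lambda_i^{-\alpha}$ together with $\sum_{i} \lambda_i^{-2\alpha} \asymp \sum_i i^{-4\alpha/d}$, which converges precisely when $\alpha > d/4$. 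Optimizing the cutoff as a function of $h$ yields the rate $h^{2\min\{\alpha-d/4;1\}}$, with the loss $d/4$ being exactly the "spectral dimension" penalty, and the borderline cases $\alpha = 1$ and $\alpha = 1 + d/2$ producing the various powers of $|\log h|$ recorded in the statement. I expect the trickiest bookkeeping to lie in tracking how the logarithmic factor $C_{\alpha+p}(h)$ from \Cref{prop:det} interacts with the cutoff-dependent sum, since the $|\log h|$ factors compound differently in each of the three regimes for $\alpha$.
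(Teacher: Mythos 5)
Your overall architecture is reasonable and partially matches the paper's: a triangle-inequality split into an ``on-$\cM$'' SFEM error plus a polyhedral transfer error controlled by \Cref{lem:error_polyhedral}, with \Cref{prop:det} as the deterministic engine and the Weyl bounds \eqref{eq:gbounds_ev} supplying summability. Two gaps remain, one minor and one central. The minor one: \Cref{lem:error_polyhedral} compares $\gamma(\cL_h)\tilde f$ with $\gamma(\sL_h)\sP_h(\sigma\tilde f^{-\ell})$, whereas the actual SFEM--Galerkin field is $\sZ_h=\gamma(\sL_h)\sW$ with $\sW=\sP_h(\sigma^{1/2}\widetilde{\cW}^{-\ell})$; a third error term comparing the noises weighted by $\sigma$ and by $\sigma^{1/2}$ is needed (the paper's \Cref{corol:err_2sfem}, which yields an $O(h^2)$ contribution via $\Vert\sigma^{1/2}-1\Vert_{L^\infty}\lesssim h^2$). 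Your two-term decomposition silently identifies $\sW$ with $\sP_h(\sigma\widetilde{\cW}^{-\ell})$.

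The central gap is in the summation step. Writing $\|(\gamma(\cL)-\gamma(\cL_h)P_h)\cW\|_{L^2(\Omega;L^2(\cM))}^2=\sum_i\|(\gamma(\cL)-\gamma(\cL_h)P_h)e_i\|_{L^2(\cM)}^2$ is correct, but your proposed tail bound is not: for large $i$ you can control $\|\gamma(\cL)e_i\|=|\gamma(\lambda_i)|\lesssim\lambda_i^{-\alpha}$, but \emph{not} $\|\gamma(\cL_h)P_he_i\|$, because $P_he_i$ is not an eigenfunction of $\cL_h$ and does not become small as $i\to\infty$ (high-frequency modes alias onto $S_h^\ell$, where $\gamma(\cL_h)$ only damps them according to the discrete eigenvalues $\lambda_j^h\lesssim h^{-2}$). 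Estimating $\sum_{i>I_0}\|\gamma(\cL_h)P_he_i\|^2=\sum_j|\gamma(\lambda_j^h)|^2\,\|(I-Q_{I_0})e_j^h\|_{L^2(\cM)}^2$ using only the $H^1$-regularity of $e_j^h$ caps the attainable rate at $O(h)$ and does not reach $h^{2\min\{\alpha-d/4;1\}}$ for smooth $\gamma$; similarly, the head sum $\sum_{i\le I_0}\lambda_i^{2p}h^{4\min\{\alpha+p;1\}}$ over-counts relative to what is needed. The paper avoids both problems by splitting off $\gamma(\cL)(I-P_h)\cW$ first (so that $\gamma(\cL_h)P_h$ only ever acts on the finite-dimensional element $P_h\cW$, to which \Cref{prop:det} is applied \emph{once} with $\|P_h\cW\|_{L^2(\Omega;L^2(\cM))}=N_h^{1/2}\lesssim h^{-d/2}$), and, for $\alpha>1$, by factoring $\gamma(\lambda)=\tilde\gamma(\lambda)\lambda^{-\zeta/2}$ so that \Cref{prop:det} is applied to $\cL^{-\zeta/2}\cW\in L^2(\Omega;L^2(\cM))$ rather than to individual eigenfunctions; the latter case also produces a genuinely new commutator term $\tilde\gamma(\cL_h)(P_h\cL^{-\zeta/2}-\cL_h^{-\zeta/2}P_h)\cW$ with no counterpart in your sketch, and it is precisely from these two pieces that the exponents $3/2$ and $1/2$ on $|\log h|$ in the regimes $1<\alpha<1+d/2$ and $\alpha\ge 1+d/2$ arise --- your proposal asserts these logarithmic factors rather than deriving them.
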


To prove the strong error estimate, we rely on the deterministic error bounds proven in the previous section, and on several intermediate approximations of $\cZ$ defined on the spaces $S_h^\ell$ (i.e., on~$\cM$) and $S_h$ (i.e., on~$\cM_h$). These intermediate approximations require in turn to define approximations of the Gaussian white noise $\cW$ on the spaces $S_h^\ell$ and $S_h$.

We first define on $S_h^\ell$ (i.e., on $\cM$), the projected white noise $\widetilde{\cW}$ as
\begin{align}\label{def:wtilde}
	\widetilde{\cW} = \sum_{j=1}^{N_h} \xi_j e_j^h, 
\end{align}
where we recall that $\lbrace e_j^h\rbrace_{1\le i\le N_h}$ denotes an orthonormal basis of eigenfunctions of $\cL_h$, and for any $j\in\lbrace 1,\dots,N_h\rbrace$, we take $\xi_j =\big(e_j^h,\cW\big)_{L^2(\cM)}$. In particular, this last relation implies (by definition of the white noise $\cW$) that  $\xi_1,\dots,\xi_{N_h}$ are independent standard Gaussian random variables.

\begin{remark}\label{rem:tildeW}
	By injecting the representation~\eqref{eq:whitenoise} of $\cW$ in the definition of $\xi_j$, we get that $\widetilde{\cW}$ can itself be formally represented by $\widetilde{\cW} = \sum_{k=1}^\infty W_k P_h e_k=P_h\cW$. This explains why we refer to it as a projected white noise. In particular, note that $\tilde{\cW} \in L^2(\Omega; L^2(\cM))$ and $\Vert \tilde{\cW} \Vert_{L^2(\Omega;L^2(\cM))}^2 = \Vert \sum_{j=1}^{N_h} \xi_j e_j^h \Vert_{L^2(\Omega;L^2(\cM))}^2 =N_h < \infty$.
\end{remark}

Based on $\widetilde{\cW}$, we can then introduce a first approximation, on the space $S_h^\ell$, of the field $\cZ$. We denote this approximation by $\tilde \cZ_h$ and define it in analogy to \Cref{eq:defZ} as
\begin{align}\label{def:ztildeh}
	\tilde \cZ_h & = \gamma(\cL_h) \widetilde{\cW}= \sum_{j=1}^{N_h}\gamma(\lambda_{j}^h)\xi_j e_j^h.
\end{align}

Now, on the space $S_h$, we define \emph{two} white noise approximations $\sWb$ and $\sW$ which are based on the projected white noise $\widetilde{\cW}$:
\begin{align*}
	\sWb = \sP_h(\sigma \widetilde{\cW}^{-\ell}) \quad \text{and} \quad \sW = \sP_h(\sigma^{1/2} \widetilde{\cW}^{-\ell}),
\end{align*}
where $\sigma$ is the ratio of area measures introduced in \Cref{sec:sfem}. 
On the one hand, we associate to $\sWb$ an approximation $\sZb_h$ of the field $\cZ$ on $S_h$, which we define in analogy to \Cref{eq:defZh} as
\begin{align*}
	\sZb_h = \gamma(\sL_h)\sWb.
\end{align*}
On the other hand, by expanding $\sWb$ and $\sW$ in the orthonormal basis $\lbrace E_j^h\rbrace_{1\le i\le N_h}$ of eigenfunctions of $\sL_h$, we obtain alternative representations of these fields, and we can draw a link between $\sW$ and the SFEM--Galerkin approximation $\sZ_{h}$, as stated in the next lemma.

\begin{lemma}
	\label{lem:noise-lemma}
	The noises $\sWb$ and  $\sW$ can be written as 
	\begin{align*}
		\sWb 
		= \sum_{j=1}^{N_h} \alpha_j E_j^h
		\quad \text{and} \quad 
		\sW 
		= \sum_{j=1}^{N_h} \beta_j E_j^h,
	\end{align*}
	where $(\alpha_1,\ldots,\alpha_{N_h})$ and $(\beta_1,\ldots,\beta_{N_h})$  are multivariate normal with mean $0$ 
	and respective covariance matrices $\bm A=[( \sigma E_i^h,E_j^h)_{L^2(\cM_h)}]_{1\le i,j\le N_h}$ and $\bm B = \bm I$. 
	In particular, it holds 
	that the SFEM--Galerkin approximation $\sZ_h$ defined in \Cref{eq:z_galer} satisfies
	\begin{align}\label{eq:defZh}
		\sZ_h = \gamma(\sL_h)\sW,
	\end{align}
	where we take for any $j\in\lbrace 1,\dots,N_h\rbrace$, $W_j^h=\beta_j$.
\end{lemma}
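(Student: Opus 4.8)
The plan is to compute the covariance structure of the two noise expansions by expanding $\sWb$ and $\sW$ in the $L^2(\cM_h)$-orthonormal eigenbasis $\{E_j^h\}$ of $\sL_h$ and exploiting the definition of the projected white noise $\widetilde{\cW}$. First I would write the coefficients as $\alpha_j = (\sWb, E_j^h)_{L^2(\cM_h)}$ and $\beta_j = (\sW, E_j^h)_{L^2(\cM_h)}$. Since $\sWb = \sP_h(\sigma\widetilde{\cW}^{-\ell})$ and $E_j^h \in S_h$, the projection $\sP_h$ is transparent against the inner product with $E_j^h$, so $\alpha_j = (\sigma\widetilde{\cW}^{-\ell}, E_j^h)_{L^2(\cM_h)}$, and likewise $\beta_j = (\sigma^{1/2}\widetilde{\cW}^{-\ell}, E_j^h)_{L^2(\cM_h)}$. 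Each is a linear functional of the Gaussian field $\widetilde{\cW}$, hence $(\alpha_j)_j$ and $(\beta_j)_j$ are jointly Gaussian with mean zero. It remains only to identify their covariance matrices.

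Next I would compute the covariances. For $\bm B$, the key is to pass from the integral over $\cM_h$ to an integral over $\cM$ using the change-of-variables relation \Cref{eq:sigma}. Writing $\beta_j = \int_{\cM_h}\sigma^{1/2}\widetilde{\cW}^{-\ell}E_j^h\dd A_h$ and noting that $\widetilde{\cW}^{-\ell}$ and $(E_j^h)^\ell$ are lifts, I would rewrite the product $\sigma^{1/2}\widetilde{\cW}^{-\ell}E_j^h$ so that one factor of $\sigma$ converts $\dd A_h$ into $\dd A$; a clean way is to observe $\beta_j = (\widetilde{\cW}, (E_j^h)^\ell \sigma^{-1/2}\cdot\sigma^{1/2})\dots$ — more carefully, since $\int_{\cM_h}\sigma\, u^{-\ell}\dd A_h = \int_\cM u\dd A$, the half-power of $\sigma$ is precisely the factor that makes $\{\sigma^{1/2}(E_i^h)^{-\ell}\}$ behave like an orthonormal system on $\cM$ transported from the $L^2(\cM_h)$-orthonormal system $\{E_i^h\}$. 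Carrying out $\Cov(\beta_i,\beta_j) = (\sigma^{1/2}(E_i^h), \sigma^{1/2}(E_j^h))\to (E_i^h,E_j^h)_{L^2(\cM_h)} = \delta_{ij}$, using that $\widetilde{\cW}=P_h\cW$ is white noise projected onto $S_h^\ell$ and that the transported functions land in $S_h^\ell$, gives $\bm B = \bm I$. The analogous computation for $\alpha_j$ carries an extra factor of $\sigma$, leaving $\Cov(\alpha_i,\alpha_j) = (\sigma E_i^h, E_j^h)_{L^2(\cM_h)}$, which is exactly $\bm A$.

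The main subtlety — and the step I would be most careful with — is the bookkeeping of the lift and the area element $\sigma$ when moving the white noise covariance between $\cM$ and $\cM_h$. The white noise $\cW$ has the defining covariance $\Cov((\phi,\cW),(\phi',\cW)) = (\phi,\phi')_{L^2(\cM)}$ on $\cM$, and $\widetilde{\cW}=P_h\cW$ projects test functions onto $S_h^\ell$; so every inner product must ultimately be reduced to an $L^2(\cM)$ inner product of functions lying in $S_h^\ell$. I would therefore track precisely which functions are lifts, inverse lifts, or honest elements of $S_h$ versus $S_h^\ell$, and apply \Cref{eq:sigma} exactly once per covariance entry to absorb the relevant power of $\sigma$. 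Getting the powers $\sigma$ versus $\sigma^{1/2}$ to cancel correctly is what distinguishes the identity covariance of $\bm B$ from the non-trivial Gram matrix $\bm A$.

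Finally, for \Cref{eq:defZh}, I would match the expansion $\sZ_h = \sum_{i}\gamma(\Lambda_i^h)W_i^h E_i^h$ from \Cref{eq:z_galer} against $\gamma(\sL_h)\sW = \sum_j \gamma(\Lambda_j^h)\beta_j E_j^h$, using $\sL_h E_j^h = \Lambda_j^h E_j^h$ and the spectral definition of $\gamma(\sL_h)$. Since $\bm B=\bm I$, the $\beta_j$ are independent standard Gaussians, so setting $W_j^h=\beta_j$ makes the two expansions coincide termwise and simultaneously supplies the promised distributional specification of the $\{W_j^h\}$ left open in \Cref{sec:method}. This closes the lemma.
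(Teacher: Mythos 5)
Your proposal follows essentially the same route as the paper's proof: expand $\sWb$ and $\sW$ in the orthonormal eigenbasis $\lbrace E_j^h\rbrace$, use that the projection $\sP_h$ is transparent against test functions in $S_h$, rewrite each coefficient as a linear functional of $\cW$ via \Cref{eq:sigma} and the definition of $\widetilde{\cW}$, read off the covariances from the white-noise isometry (the extra factor of $\sigma$ giving $\bm A$, the cancelling half-powers of $\sigma$ giving $\bm B=\bm I$), and finally match the spectral expansion of $\gamma(\sL_h)\sW$ against \Cref{eq:z_galer}. Apart from minor notational slips (you write $(E_i^h)^{-\ell}$ where the lift $(E_i^h)^{\ell}$ is meant, and the covariance computation for $\beta$ should read $\Cov(\beta_i,\beta_j)=\big((\sigma^{-1/2}E_i^h)^\ell,(\sigma^{-1/2}E_j^h)^\ell\big)_{L^2(\cM)}=(E_i^h,E_j^h)_{L^2(\cM_h)}=\delta_{ij}$), this is the paper's argument.
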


\begin{proof}
	As $\sWb \in S_h$, we can expand it in the orthonormal basis  $\lbrace E_j^h\rbrace_{1\le i\le N_h}$ to get
	\begin{align*}
		\sWb = \sum_{j=1}^{N_h} \alpha_j E_j^h,
	\end{align*}
	where $\alpha_j = (\sWb,E_j^h)_{L^2(\cM_h)} = (\sigma \widetilde{\cW}^{-\ell},E_j^h)_{L^2(\cM_h)}$. 
	Then, by definition of $\sigma$, of $\widetilde{\cW}$ and since $(E_j^h)^\ell \in S_h^\ell$, we further get for any $1\le j\le N_h$,
	\begin{align*}
		\alpha_j &= ( \widetilde{\cW},(E_j^h)^\ell)_{L^2(\cM)} 
		=\sum_{k=1}^{N_h} (\cW,e_k^h)_{L^2(\cM)}  (e_k^h,(E_j^h)^\ell)_{L^2(\cM)}\\
		&=\bigg( \cW, \sum_{k=1}^{N_h} (e_k^h,(E_j^h)^\ell)_{L^2(\cM)}e_k^h\bigg)_{L^2(\cM)}  
		= ( \cW,(E_j^h)^\ell)_{L^2(\cM)} .
	\end{align*}
	Therefore, by definition of the white noise $\cW$, we can conclude that for any $1\le i,j\le N_h$,  $\alpha_j$ is normally distributed with mean $0$, and that
	\begin{align*}
		\E[\alpha_i\alpha_j] =\Cov(\alpha_i,\alpha_j)
		= \left((E_i^h)^\ell ,(E_j^h)^\ell\right)_{L^2(\cM)} = \left(\sigma E_i^h ,E_j^h \right)_{L^2(\cM_h)} = A_{ij}.
	\end{align*}
	Hence, $(\alpha_1,\ldots,\alpha_{N_h})$ is indeed multivariate normal (any linear combination of the $\alpha_j$ being Gaussian by definition of $\cW$) with mean $0$ and covariance matrix $\bm A$. 
	
	Similarly, since $\sW \in S_h$, we can write again
	\begin{align*}
		\sW = \sum_{j=1}^{N_h} \beta_j E_j^h,	
	\end{align*}
	where $\beta_j = (\sW,E_j^h)_{L^2(\cM_h)} = (\sigma^{1/2} \widetilde{\cW}^{-\ell},E_j^h)_{L^2(\cM_h)}=( \sigma\widetilde{\cW}^{-\ell}, \sigma^{-1/2}E_j^h)_{L^2(\cM_h)}$,
	and the same computations as before yield that $\beta_j = \big( \cW,(\sigma^{-1/2}E_j^h)^\ell\big)_{L^2(\cM)}$.
	Hence, we conclude this time that for any $1\le i,j\le N_h$,  $\beta_j$ is also normally distributed with mean $0$, and that
	\begin{align*}
		\E[\beta_i\beta_j]
		& = \left(\sigma (\sigma^{-1/2}E_i^h) , (\sigma^{-1/2}E_j^h) \right)_{L^2(\cM_h)} 
		=  \left(\sigma (\sigma^{-1/2}E_i^h) , (\sigma^{-1/2}E_j^h) \right)_{L^2(\cM_h)} =B_{ij},
	\end{align*}
	by orthonormality of $\lbrace E_j^h\rbrace_{1\le i\le N_h}$.
	In conclusion, $(\beta_1,\ldots,\beta_{N_h})$ is indeed multivariate normal  with mean $0$ and covariance matrix $\bm B=\bm I$. In particular, this means that $\beta_1,\dots,\beta_{N_h}$ are independent standard Gaussian variables.
	
	Finally, note that we can write
	\begin{align*}
		\gamma(\sL_h)\sP_h\sW
		&=\sum_{j=1}^{N_h}\gamma(\Lambda_j^h)\big(\sP_h\sW,E_j^h\big)_{L^2(\cM_{h})}E_j^h
		=\sum_{j=1}^{N_h}\gamma(\Lambda_j^h)\big(\sW,E_j^h\big)_{L^2(\cM_{h})}E_j^h \\
		&=\sum_{j=1}^{N_h}\gamma(\Lambda_j^h)\beta_jE_j^h
		= \sZ_h
	\end{align*}
	where we take $W_j^h=\beta_j$ in \Cref{eq:z_galer}.
\end{proof}

We now circle back to proving \Cref{th:err_fem}. Using the intermediate approximations $\tilde \cZ_h$ and $\sZb_h$ and the equivalence of the $L^2$ norms on $\cM$ and $\cM_{h}$, we can upper-bound the error between the field $\cZ$ and its SFEM--Galerkin approximation $\sZ_h$ by
\begin{align*}
	\|&\cZ-  \sZ_h^\ell\|_{L^2(\Omega;L^2(\cM))} \\
	&\quad\lesssim \|\cZ-  \tilde \cZ_h\|_{L^2(\Omega;L^2(\cM))}  + \| \tilde \cZ_h-  \sZb_h^\ell\|_{L^2(\Omega;L^2(\cM))} + \|  \sZb_h^\ell-  \sZ_h^\ell\|_{L^2(\Omega;L^2(\cM))}  \\
	&\quad\lesssim \|\cZ-  \tilde \cZ_h\|_{L^2(\Omega;L^2(\cM))}  
	{+} \big\|\big(\tilde \cZ_h\big)^{-\ell}{-}\sZb_h\big\|_{L^2(\Omega;L^2(\cM_h))} 
	{+}\big\|\sZb_h -\sZ_h\big\|_{L^2(\Omega;L^2(\cM_h) )}.
\end{align*}
We derive error estimates for each one of the three terms obtained in the last inequality. We start with the term $\|\cZ-  \tilde \cZ_h\|_{L^2(\Omega;L^2(\cM))}$.

\begin{lemma}
	Let $h\in(0,h_0)$. It holds that 
	\begin{align*}
		\|\cZ- \tilde \cZ_h\|_{L^2(\Omega;L^2(\cM))} \lesssim
		C_\alpha(h) h^{2\min\{\alpha-d/4;1\}},
	\end{align*}
	where  $C_\alpha(h)=\vert \log h\vert$ if $d/4<\alpha \le 1$, $C_\alpha(h)=\vert \log h\vert^{3/2}$ if $1<\alpha < 1+d/2$, and $C_\alpha(h)=\vert \log h\vert^{1/2}$ if $\alpha \ge 1+d/2$.
	%$C_\alpha(h)=|\log h|$ if $\alpha=1$,  $C_\alpha(h)=|\log h|^{3/2}$ if $\alpha>1+d/4$ and  $h=e^{-\frac{\min\lbrace d/2;\alpha-d/2 \rbrace}{2(\alpha-1-d/4)}}<h_0$, and  $C_\alpha(h)=|\log h|^{1/2}$ otherwise.
	\label{lem:finest}
\end{lemma}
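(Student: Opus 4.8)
The plan is to use the Gaussian structure to turn the mean-square error into a purely deterministic spectral quantity, and then to split it into a \emph{best-approximation} part and a \emph{consistency} part. Since $\cZ=\sum_{k}\gamma(\lambda_k)W_ke_k$ and, by \Cref{rem:tildeW} and \Cref{lem:noise-lemma}, $\tilde\cZ_h=\sum_{j=1}^{N_h}\gamma(\lambda_j^h)\xi_je_j^h$ with $\xi_j=(e_j^h,\cW)_{L^2(\cM)}$ and $\E[W_k\xi_j]=(e_k,e_j^h)_{L^2(\cM)}$, I would expand $\E\|\cZ\|^2-2\E(\cZ,\tilde\cZ_h)+\E\|\tilde\cZ_h\|^2$. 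Using $\gamma(\lambda_j^h)e_j^h=\gamma(\cL_h)e_j^h$ and $\|(I-P_h)e_k\|^2+\|P_he_k\|^2=1$, the cross terms recombine into the exact identity
\[
\|\cZ-\tilde\cZ_h\|_{L^2(\Omega;L^2(\cM))}^2
= \underbrace{\sum_{k\in\N}\gamma(\lambda_k)^2\,\|(I-P_h)e_k\|_{L^2(\cM)}^2}_{=:A}
+ \underbrace{\sum_{j=1}^{N_h}\big\|[\gamma(\cL)-\gamma(\cL_h)]e_j^h\big\|_{L^2(\cM)}^2}_{=:B},
\]
where $A$ is the $L^2$-projection error onto $S_h^\ell$ weighted by the decay of $\gamma$, and $B$ is the Galerkin consistency error.

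For $A$ I would use the finite-element best-approximation bound $\|(I-P_h)e_k\|_{L^2(\cM)}\lesssim\min\{1,h^2\lambda_k\}$, valid because $e_k\in H^2(\cM)$ with $\|e_k\|_{H^2(\cM)}\sim\lambda_k$, together with $|\gamma(\lambda_k)|\lesssim\lambda_k^{-\alpha}$ and Weyl's law \eqref{eq:gbounds_ev}. Splitting at the cutoff $\lambda_k\sim h^{-2}$ and comparing $\sum_k\lambda_k^{s}$ with $\int \ell^{s+d/2-1}\dd\ell$ gives $A\lesssim h^{4\min\{\alpha-d/4;1\}}$, with a single logarithmic factor appearing only at the critical exponent $\alpha=1+d/4$; in every regime $A$ is dominated by the claimed bound.

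The term $B$ is the crux. Writing $[\gamma(\cL)-\gamma(\cL_h)]e_j^h=[\gamma(\cL)-\gamma(\cL_h)P_h]e_j^h$ (as $P_he_j^h=e_j^h$), I would apply \Cref{prop:det} with datum $f=e_j^h$, using the norm equivalences \eqref{eq:norm_equiv} and \Cref{lem:ev_bound} to get $\|\cL^pe_j^h\|_{L^2(\cM)}\lesssim(\lambda_j^h)^p\lesssim\lambda_j^p$ for $p\in[0,1/2]$, and then summing over $j\le N_h$ with $\sum_{j\le N_h}\lambda_j^{2p}\lesssim h^{-d-4p}$. Optimising $p$ and tracking $C_{\alpha+p}(h)$ reproduces $B\lesssim|\log h|^2 h^{4\alpha-d}$ for $d/4<\alpha\le1$, matching the claim. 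For $\alpha>1$ this term-by-term argument saturates at $h^{4-d}$ and is no longer sharp, so here I would instead exploit the extra smoothness of $\gamma$: by Cauchy's estimate $|\gamma'(z)|\lesssim|z|^{-\alpha-1}$ on $H_{\pi/2}$, on the spectral bulk $\lambda\approx\lambda_j^h$ one gains a factor $|\lambda-\lambda_j^h|$ that is controlled through the eigenvalue closeness $|\lambda_j-\lambda_j^h|\lesssim h^2\lambda_j$ of \Cref{lem:ev_bound}, while on the high-frequency tail one uses $|\gamma(\lambda)-\gamma(\lambda_j^h)|\lesssim\lambda^{-\alpha}+(\lambda_j^h)^{-\alpha}$ with Weyl's law. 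A frequency split over $j$ into the regimes cut out by the thresholds $\alpha=1$ and $\alpha=1+d/2$, mirroring the case analysis in the proof of \Cref{prop:det}, should then yield $h^{4\min\{\alpha-d/4;1\}}$ with the squared-norm logarithmic powers $|\log h|^{3}$ for $1<\alpha<1+d/2$ and $|\log h|$ for $\alpha\ge1+d/2$.

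I expect the sharp treatment of $B$ for $\alpha>1$ to be the main obstacle. The naive per-eigenfunction consistency bound loses a full power $h^{d}$ precisely because the discrete eigenfunctions $e_j^h$ lie only in $H^{3/2-\varepsilon}(\cM)$, so their spectral mass $\mu_j=\sum_k(e_j^h,e_k)^2\delta_{\lambda_k}$ has a heavy tail and infinite second moment, which is exactly the aliasing one must tame. Turning the heuristic "$\mu_j$ concentrates near $\lambda_j^h$" into quantitative estimates \emph{without} invoking explicit eigenfunction approximation bounds—relying only on the resolvent/functional-calculus estimates of \Cref{lem:lemma_error1}, Weyl's law, and the decay of $\gamma$ and $\gamma'$—is the delicate part, as is the careful bookkeeping of the exact logarithmic exponents across the three $\alpha$-regimes.
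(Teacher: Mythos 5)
Your exact decomposition of the mean-square error into the projection part $A=\sum_{k}\gamma(\lambda_k)^2\|(I-P_h)e_k\|_{L^2(\cM)}^2$ and the consistency part $B=\sum_{j=1}^{N_h}\|[\gamma(\cL)-\gamma(\cL_h)]e_j^h\|_{L^2(\cM)}^2$ is correct: it recombines to $\sum_k\gamma(\lambda_k)^2-2\sum_{j,k}\gamma(\lambda_k)\gamma(\lambda_j^h)(e_k,e_j^h)_{L^2(\cM)}^2+\sum_j\gamma(\lambda_j^h)^2$, which is indeed $\E[\|\cZ-\tilde\cZ_h\|_{L^2(\cM)}^2]$. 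Your treatment of $A$, and of $B$ in the regime $d/4<\alpha\le 1$ via \Cref{prop:det} at $p=0$ with $\sum_{j\le N_h}1\propto h^{-d}$, is sound and essentially parallels the paper's splitting into $S_1+S_2$. The gap is in the case $\alpha>1$, which is where two of the three claimed logarithmic regimes live. You correctly observe that the term-by-term application of \Cref{prop:det} to $f=e_j^h$ saturates at $h^{4-d}$, but the repair you sketch --- a Cauchy estimate $|\gamma'(z)|\lesssim|z|^{-\alpha-1}$ combined with the eigenvalue closeness $|\lambda_j-\lambda_j^h|\lesssim h^2\lambda_j$ of \Cref{lem:ev_bound} --- does not close. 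The quantity to control is $\sum_k(\gamma(\lambda_k)-\gamma(\lambda_j^h))^2(e_j^h,e_k)_{L^2(\cM)}^2$, and a mean-value bound only helps on the portion of the spectral measure of $e_j^h$ that sits near $\lambda_j^h$; quantifying how little mass sits far away is precisely an eigenfunction-approximation statement of the kind the paper is constructed to avoid, and, as you note yourself, the second spectral moment of $e_j^h$ is infinite, so no cheap Chebyshev-type concentration is available. You flag this as the ``delicate part'' but supply no estimate, so the proof is incomplete exactly where the rates $|\log h|^{3/2}$ and $|\log h|^{1/2}$ are claimed.

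For comparison, the paper circumvents this by a regularity-shifting factorization rather than by spectral concentration: for $\alpha>1$ it writes $\gamma(\lambda)=\tilde\gamma(\lambda)\lambda^{-\zeta/2}$ with $\tilde\gamma$ an $(\eta/2)$-\psd whose decay exponent sits just above $d/4$, so that $\cL^{-\zeta/2}\cW$ becomes a genuine $L^2(\Omega;L^2(\cM))$ datum with $\|\cL^p\cL^{-\zeta/2}\cW\|_{L^2(\Omega;L^2(\cM))}$ finite for $p$ up to roughly $\min\{\alpha-d/2;1\}$. This allows \Cref{prop:det} to be applied once, with a large $p$, to $(\tilde\gamma(\cL)-\tilde\gamma(\cL_h)P_h)\cL^{-\zeta/2}\cW$, recovering the full rate $h^{2\min\{\alpha-d/4;1\}}$; the leftover commutator $\tilde\gamma(\cL_h)(P_h\cL^{-\zeta/2}\cW-\cL_h^{-\zeta/2}P_h\cW)$ is the only place where a per-eigenfunction argument is used, and there $\sum_j\tilde\gamma(\lambda_j^h)^2\lesssim|\log h|$ converges because $\tilde\gamma$ decays just fast enough. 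If you wish to keep your decomposition, you would need to import this idea into the estimate of $B$; as written, the $\alpha>1$ case is a heuristic rather than a proof.
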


\begin{proof}

	Our aim is to bound the error between $\tilde \cZ_h$ and $\cZ$, where we remark in particular that $\cZ = \gamma(\cL)\cW$ and $\cZ_h = \gamma(\cL_h)P_h \cW$ (cf. \Cref{rem:tildeW}). Recall that we consider $h\in(0,h_0)$ (meaning in particular that $\vert \log h\vert> 1$).

	Similarly to the proof of \cite[Theorem 5.2]{Bonito2024},  we introduce $\varepsilon_h = \min\lbrace d/2;\alpha-d/2 \rbrace/(2\vert\log h\vert)$, $\eta = 2(d/4 + \varepsilon_h)$, and let $\zeta = 2(\alpha-d/4 -\varepsilon_h) >0$, so that $\alpha = (\eta+\zeta)/2$. Note in particular that $\varepsilon_h >0$ since $d\in\lbrace 1, 2\rbrace$ and $\alpha>1>d/2$, and $\varepsilon_h<(\alpha-d/2)/2<\alpha-d/2<\alpha-d/4$. Besides, $\eta > d/2$ and $\zeta >0$.
	We distinguish two cases: if $\alpha\le 1$, and if $\alpha>1$.
	
	\medspace
	
	\paragraph{\underline{Case $\alpha\le 1$}} The proof follows the same approach as \cite[Theorem 5.2]{Bonito2024}. 	Indeed, note that the triangle inequality yields
	\begin{align*}
		\| \cZ- \tilde \cZ_h\|_{L^2(\Omega;L^2(\cM))} 
		= \| \gamma(\cL)\cW - \gamma(\cL_h)P_h \cW\|_{L^2(\Omega;L^2(\cM))} &\leq S_1 + S_2,
	\end{align*}
	where we take 
	\begin{align*}
		S_1 &
		= \| \gamma(\cL)\cW - \gamma(\cL)P_h \cW\|_{L^2(\Omega;L^2(\cM))} ,\quad
		S_2 
		= \| \gamma(\cL)P_h\cW - \gamma(\cL_h)P_h \cW\|_{L^2(\Omega;L^2(\cM))} .
	\end{align*}
	
	To bound the term $S_1$, we use 
	\begin{align*}
		S_1^2 
		&= \| \gamma(\cL)(I - P_h )\cW\|_{L^2(\Omega;L^2(\cM))}^2
		=\| \sum_{i \in \N} \gamma(\lambda_{i})W_i (I - P_h )e_i\|_{L^2(\Omega;L^2(\cM))}^2 \\
		&=\sum_{i \in \N} \gamma(\lambda_{i})^2\Vert (I - P_h )e_i\Vert_{L^2(\cM)}^2,
	\end{align*}
	and since $\gamma$ is an $\alpha$-\psd, we get
	\begin{equation}\label{eq:e1}
		S_1^2 \lesssim
		\sum_{i \in \N}\lambda_{i}^{-2\alpha}\Vert (I - P_h )e_i\Vert_{L^2(\cM)}^2,
	\end{equation}
	%	Recall then that the  Bramble–Hilbert lemma (cf. \cite[Equation (4.8)]{Bonito2024}) yields that for any $t\in(0,2)$ and any $e_i \in H^t(\cM)$,
	%	\begin{equation}\label{eq:bh}
		%		\Vert (I - P_h )e_i\Vert_{L^2(\cM)} \lesssim h^t\Vert e_i\Vert_{H^t(\cM)}
		%		\lesssim h^t\Vert e_i\Vert_{H^t(\cM)} 
		%		\lesssim  h^t\Vert \cL^{t/2} e_i\Vert_{H^t(\cM)} = h^t \lambda_{i}^{t/2}
		%	\end{equation}
	%	where we used the equivalence of Sobolev norms dot-spaces norms in the last inequality.
	Applying the Bramble--Hilbert lemma~\eqref{eq:bh} with $t=2((\alpha-d/4)-\varepsilon_h)\in (0,2)$ together with \Cref{eq:e1} gives
	\begin{equation*}
		S_1^2 \lesssim
		\sum_{i \in \N}h^{2t}\lambda_{i}^{-2\alpha+t}
		=h^{4(\alpha-d/4-\varepsilon_h)}\sum_{i \in \N}\lambda_{i}^{-2(d/4+\varepsilon_h)}
		\lesssim
		h^{4(\alpha-d/4)}\sum_{i \in \N}\lambda_{i}^{-2(d/4+\varepsilon_h)},
	\end{equation*}
	since $h^{-4\varepsilon_h} = e^{4(\alpha-d/4)}\lesssim 1$.
	We can then write, using \Cref{eq:gbounds_ev},
	\begin{equation*}
		\begin{aligned}
			S_1^2 
			&\lesssim h^{4(\alpha-d/4)}\sum_{i \in \N}i^{-(1+4\varepsilon_h/d)}
			%\lesssim h^{4(\alpha-d/4)}\sum_{i \in \N} \int_{i}^{i+1}t^{-(1+4\varepsilon_h/d)}\dd t\\
			\lesssim h^{4(\alpha-d/4)} \int_{1}^{\infty}t^{-(1+4\varepsilon_h/d)}\dd t \\
			&\lesssim  \frac{1}{4\varepsilon_h/d}h^{4(\alpha-d/4)}
			\lesssim \vert \log h\vert h^{4(\alpha-d/4)},
			%\le h^{4\min\lbrace(\alpha-d/4); 1\rbrace}\frac{(1+4\varepsilon_h/d)}{4\varepsilon_h/d},
		\end{aligned}
	\end{equation*}
	where we used the definition of $\varepsilon_{h}$ for the last inequality. So, we conclude $S_1 \lesssim \vert \log h\vert^{1/2} h^{2(\alpha-d/4)}$.
	
	For the term $S_2$, we note that 
	\begin{equation*}
		S_2 = \| \gamma(\cL)P_h\cW - \gamma(\cL_h)P_h( P_h\cW)\|_{L^2(\Omega;L^2(\cM))},
	\end{equation*}
	where, following \Cref{def:wtilde}, $\Vert P_h\cW \Vert_{L^2(\Omega;L^2(\cM))}^2 = \Vert \sum_{j=1}^{N_h} \xi_j e_j^h \Vert_{L^2(\Omega;L^2(\cM))}^2 =N_h < \infty$ (cf. \Cref{rem:tildeW}).
	Hence, we can use \Cref{prop:det} to deduce that since $\alpha \le 1$,
	\begin{equation*}
		S_2 \lesssim |\log h| h^{2\alpha} \Vert P_h\cW \Vert_{L^2(\Omega;L^2(\cM))}. 	
	\end{equation*}
	%where $ K_\alpha(h)=|\log h|$ if $\alpha=1$, and $ K_\alpha(h)=1$ otherwise.
	Note then that since $N_h \propto h^{-d}$ (uniform mesh assumption), we can write $\Vert P_h\cW \Vert_{L^2(\Omega;L^2(\cM))} = N_h^{1/2} \lesssim h^{-d/2}$. Therefore, we can conclude that
	\begin{equation*}
		S_2 \lesssim |\log h| h^{2\alpha -d/2}.
	\end{equation*}
	Putting together the bounds obtained for $S_1$ and $S_2$, we can then conclude that if $\alpha \le 1$, 
	\begin{align*}
		\| \cZ- \tilde \cZ_h\|_{L^2(\Omega;L^2(\cM))} 
		&\leq S_1 + S_2 \lesssim  |\log h| h^{2\alpha -d/2} =|\log h| h^{2\min\lbrace(\alpha-d/4); 1\rbrace},
	\end{align*}
	where
	%we take $K_0(h)=\vert \log h\vert$ if $\alpha=1$, and $K_0(h)=\vert \log h\vert^{1/2}$ otherwise, and 
	we use the fact that $\alpha-d/4<\alpha<1$ to derive the last inequality.

	\medspace
	
	\paragraph{\underline{Case $\alpha>1$}} We now assume that $\alpha >1$. 
	%We introduce $\varepsilon_h = \min\lbrace d/2;\alpha-d/2 \rbrace/(2\vert\log h\vert)$, $\eta = 2(d/4 + \varepsilon_h)$, and let $\zeta = 2(\alpha-d/4 -\varepsilon_h) >0$, so that $\alpha = (\eta+\zeta)/2$. Note in particular that $\varepsilon_h >0$ since $d\in\lbrace 1, 2\rbrace$ and $\alpha>1>d/2$, and $\varepsilon_h<(\alpha-d/2)/2<\alpha-d/2<\alpha-d/4$. Besides, $\eta > d/2$ and $\zeta >0$.
	We define the function $\tilde \gamma(x) = \gamma(x)x^{\zeta/2}$. 
	Note that $\gamma$ is an $\alpha$-\psd and $\tilde{\gamma}$ decays as $|\tilde \gamma(x)| \lesssim |x|^{-\left(\alpha-\zeta/2\right)} = |x|^{-\eta/2}$. Hence,
	$\tilde{\gamma}$ is a $(\eta/2)$-\psd.
	Now, by definition of $\tilde\gamma$,
	\begin{align*}
		\| \cZ- \tilde \cZ_h\|_{L^2(\Omega;L^2(\cM))} 
		&= \| \tilde \gamma(\cL)\cL^{-\zeta/2}\cW- \tilde{\gamma}(\cL_h)\cL_h^{-\zeta/2}P_h \cW\|_{L^2(\Omega;L^2(\cM))},
	\end{align*}
	so that the triangle inequality yields $\| \cZ- \tilde \cZ_h\|_{L^2(\Omega;L^2(\cM))} \leq E_1 + E_2$
	where we take 
	\begin{align*}
		E_1 &
		=\| \big(\tilde \gamma(\cL)-\tilde\gamma(\cL_h)P_h\big) \cL^{-\zeta/2}\cW\|_{L^2(\Omega;L^2(\cM))},\\ 
		E_2 &
		=\| \tilde \gamma(\cL_h)\big( P_h\cL^{-\zeta/2}\cW-\cL_h^{-\zeta/2}P_h \cW\big)\|_{L^2(\Omega;L^2(\cM))}.
	\end{align*}

	For the term $E_1$, we first note that $\zeta =d/2+2(\alpha-d/2 -\varepsilon_h)>d/2$ since $\varepsilon_h <\alpha-d/2$. 
	Therefore, we can use \cite[Lemma 4.2]{Bonito2024} and the fact that $h$ (and therefore $\varepsilon_{h}$) is upper-bounded to deduce that
	\begin{align*}
		\| \cL^{-\zeta/2}\cW\|_{L^2(\Omega;L^2(\cM))}^2 \leq \frac{\zeta}{\zeta-d/2} \lesssim 1.
	\end{align*}
	In particular, $\cL^{-\zeta/2}\cW$ is in $L^2(\Omega;L^2(\cM))$. Let then $p=\min\{\alpha-d/2; 1\}-2\varepsilon_{h}$. In particular,  we have $p\in (0,1)$ since $p\le 1-2\varepsilon_{h}<1$ and $p > \min\{\alpha-d/2; 1\}-\min\{\alpha-d/2; d/2\}\ge 0$.
	Moreover, we have $\zeta/2-p=d/4+(\alpha-d/2)-\min\{\alpha-d/2; 1\}+\varepsilon_{h}\ge d/4 + \varepsilon_{h} >d/4$. Hence, using once again \cite[Lemma 4.2]{Bonito2024}, we can conclude that
	\begin{align*}
		\| \cL^p\cL^{-\zeta/2}\cW\|_{L^2(\Omega;L^2(\cM))}^2
		&=\| \cL^{-(\zeta/2-p)}\cW\|_{L^2(\Omega;L^2(\cM))}^2\\
		&\lesssim \frac{\zeta/2-p}{\zeta/2-p-d/4}\lesssim
		\begin{cases}
			\vert \log h\vert, &\text{if } \alpha-d/2\le 1 \\
			1, &\text{if } \alpha-d/2> 1
		\end{cases}
	\end{align*}
	where we used the definition of $\varepsilon_h$ to derive the last inequality. 
	Hence, we can apply \Cref{prop:det} to get
	\begin{align*}
		E_1^2 &= \E[\| \big(\tilde \gamma(\cL)-\tilde\gamma(\cL_h)P_h\big) \cL^{-\zeta/2}\cW\|_{L^2(\cM)}^2] \\
		&\lesssim C_{\eta/2+p}(h)^2 h^{4\min\{\eta/2+p; 1\}}\E[\|  \cL^p\cL^{-\zeta/2}\cW\|_{L^2(\cM)}^2] \\
		&\lesssim C_{\eta/2+p}(h)^2 h^{4\min\{\eta/2+p; 1\}}K'_\alpha(h)^2,
	\end{align*}
	where we take on the one hand, $C_{\eta/2+p}(h)=\vert \log h\vert$ if $\eta/2+p\le1$, and $C_{\eta/2+p}(h)=1$ otherwise, and on the other hand  $K'_{\alpha}(h)= \vert \log h\vert^{1/2}$ if $\alpha \le 1+d/2$ and $K'_\alpha(h)=1$ otherwise.
	
	Finally, note that according to \Cref{lem:min}, $\eta/2+p=\min\{\alpha-d/2; 1\}+d/4-\varepsilon_{h}\ge \min\{\alpha-d/4; 1\}-\varepsilon_{h}$ which implies that $h^{4\min\{\eta/2+p; 1\}} \le h^{4(\min\{\alpha-d/4; 1\}-\varepsilon_{h})}=h^{4\min\{\alpha-d/4; 1\}}\exp(2\min\lbrace d/2;\alpha-d/2 \rbrace)$.
	Besides, we note that if $\alpha>1+d/2$, then $\eta/2+p=1+d/4-\varepsilon_h>1$ (since by construction $\varepsilon_{h}<d/4$), and therefore $C_{\eta/2+p}(h)=1$.
	We can therefore conclude that
	\begin{equation*}
		E_1 \lesssim K_{1}(h)  h^{2\min\{\alpha-d/4; 1\}}
	\end{equation*}
	where  $K_{1}(h)=\vert \log h\vert^{3/2}$ if $1<\alpha \le 1+d/4+\varepsilon_{h}$, and $K_{1}(h)=\vert \log h\vert^{1/2}$ if $1+d/4+\varepsilon_{h}< \alpha\le 1+d/2$ and $K_{1}(h)=1$ if $\alpha >1+d/2$. 
	
	For $E_2$, we first use the definition of the projection operator $P_h$ and the self-adjointness of $\cL$ and $\cL_h$ (which is a consequence of the definition of the associated bilinear form $\mathsf{A}_{\cM}$)
	\begin{equation*}
		\begin{aligned}
			E_2^2
			&=\bigg\| \sum_{i=1}^{N_h} \tilde{\gamma}(\lambda_i^h)\bigg(\big(P_h\cL^{-\zeta/2}\cW, e_i^h\big)_{L^2(\cM)}-\big(\cL_h^{-\zeta/2}P_h\cW, e_i^h\big)_{L^2(\cM)}\bigg)e_i^h\bigg\|_{L^2(\Omega;L^2(\cM))}^2\\
			&=\bigg\| \sum_{i=1}^{N_h} \tilde{\gamma}(\lambda_i^h)\bigg(\big(\cL^{-\zeta/2}\cW, e_i^h\big)_{L^2(\cM)}-\big(P_h\cW, \cL_h^{-\zeta/2}e_i^h\big)_{L^2(\cM)}\bigg)e_i^h\bigg\|_{L^2(\Omega;L^2(\cM))}^2\\
			&=\bigg\| \sum_{i=1}^{N_h} \tilde{\gamma}(\lambda_i^h)\bigg( \big(\cW,\cL^{-\zeta/2}e_i^h\big)_{L^2(\cM)}-\big(\cW, \cL_h^{-\zeta/2}e_i^h\big)_{L^2(\cM)}\bigg)e_i^h\bigg\|_{L^2(\Omega;L^2(\cM))}^2 \\
			&=\bigg\| \sum_{i=1}^{N_h} \tilde{\gamma}(\lambda_i^h) \big(\cW,\cL^{-\zeta/2}e_i^h- \cL_h^{-\zeta/2}e_i^h\big)_{L^2(\cM)}e_i^h\bigg\|_{L^2(\Omega;L^2(\cM))}^2.
		\end{aligned}
	\end{equation*}
	Using the orthonormality of the basis $\lbrace e_i^h\rbrace_{1\le i\le N_h}$ and the definition of the white noise $\cW$, we conclude that
	\begin{equation*}
		\begin{aligned}
			E_2^2
			&=\E\bigg[ \sum_{i=1}^{N_h} \tilde{\gamma}(\lambda_i^h)^2 \vert\big(\cW,\cL^{-\zeta/2}e_i^h- \cL_h^{-\zeta/2}e_i^h\big)_{L^2(\cM)}\vert^2 \bigg] \\
			&=\sum_{i=1}^{N_h} \tilde{\gamma}(\lambda_i^h)^2 \Vert\cL^{-\zeta/2}e_i^h- \cL_h^{-\zeta/2}e_i^h\Vert_{L^2(\cM)}^2.
		\end{aligned}
	\end{equation*}
	
	On the one hand, using \Cref{prop:det} with $f=e_i^h\in S_h^\ell$ ($ 1\le i\le N_h$), which has norm $\Vert e_i^h\Vert_{L^2(\cM)}=1$, we have
	\begin{align*}
		\Vert\cL^{-\zeta/2}e_i^h- \cL_h^{-\zeta/2}e_i^h\Vert_{L^2(\cM)}
		\lesssim C_{\zeta}(h)h^{2\min\{\zeta/2; 1\}}, \quad\text{where}\quad C_\zeta(h) = 
		\begin{cases}
			1 , & \zeta > 2,	 \\
			|\log h|   , & \zeta \le 2.
		\end{cases}	
	\end{align*}
	Note that by definition of $\zeta$ and using \Cref{lem:min}, we have $\min\{\zeta/2; 1\}\ge \min\{\alpha -d/4; 1\} -\varepsilon_{h} >0$ (since $\varepsilon_{h} \in (0, \alpha-d/4)$), which in turn gives $h^{2\min\{\zeta/2; 1\}} \le h^{2(\min\{\alpha -d/4; 1\} -\varepsilon_{h})}\lesssim h^{2\min\{\alpha -d/4; 1\}}$. Hence, we can conclude that
	\begin{align*}
		\Vert\cL^{-\zeta/2}e_i^h- \cL_h^{-\zeta/2}e_i^h\Vert_{L^2(\cM)}
		\lesssim C_{\zeta}(h)h^{2\min\{\alpha - d/4; 1\}}
	\end{align*}
	and therefore, $E_2^2
	\lesssim C_{\zeta}(h)^2 h^{4\min\{\alpha - d/4; 1\}} \sum_{i=1}^{N_h} \tilde{\gamma}(\lambda_i^h)^2$.
	On the other hand,  the term $\sum_{i=1}^{N_h} \tilde\gamma(\lambda_i^h)^2 $can be bounded using \Cref{eq:SF_growth} and \Cref{eq:gbounds_ev} which yield that 
	\begin{align*}
		\sum_{i=1}^{N_h} \tilde\gamma(\lambda_i^h)^2  \lesssim \sum_{i=1}^{N_h} (\lambda_i^h)^{-\eta} \lesssim \sum_{i=1}^{N_h} i^{-2\eta/d} 
		\lesssim \int_{1}^{\infty}t^{-2\eta/d} = \frac{1}{2\eta/d-1}
		\lesssim \vert\log h\vert,
	\end{align*}
	since $2\eta/d = 1 +4\varepsilon_{h}/d > 1$ and by definition of $\varepsilon_{h}$. Hence, we can  conclude that %$E_2^2 \lesssim  C_{\zeta}(h)^2  h^{4\min\{\alpha - d/4; 1\}}\vert \log h\vert $ and therefore
	\begin{equation*}
		\begin{aligned}
			E_2
			&
			\lesssim  C_{\zeta}(h) \vert \log h\vert^{1/2} h^{2\min\{\alpha - d/4; 1\}}
			\lesssim  K_2(h) h^{2\min\{\alpha - d/4; 1\}},
		\end{aligned}
	\end{equation*}
	where $K_{2}(h)=C_\zeta(h) \vert \log h\vert^{1/2}$, i.e., $K_2(h) = \vert \log h\vert^{3/2}$ if $1<\alpha \le 1+d/4+\varepsilon_{h}$, and $K_{2}(h)= \vert \log h\vert^{1/2}$ if $\alpha > 1+d/4+\varepsilon_{h}$. 
	%Note in particular that, by definition of $\varepsilon_{h}$, $\alpha \le 1+d/4+\varepsilon_{h}$ is only possible if $\varepsilon_{h}\ge \alpha -1-d/4$ and $h=e^{-\frac{\min\lbrace d/2;\alpha-d/2 \rbrace}{2(\alpha-1-d/4)}}\in(0,h_0)$. Hence, we have $K_{2}(h)=\vert \log h\vert^{3/2}$ if $\alpha > 1+d/4$ and $h=e^{-\frac{\min\lbrace d/2;\alpha-d/2 \rbrace}{2(\alpha-1-d/4)}}\in(0,h_0)$, and $K_{2}(h)=\vert \log h\vert^{1/2}$ otherwise. 
	
	Hence, by putting together the estimates for $E_1$ and $E_2$, and noting that $K_1(h)\le K_2(h)$, we can conclude that if $\alpha>1$, then
	\begin{align*}
		\| \cZ- \tilde \cZ_h\|_{L^2(\Omega;L^2(\cM))} 
		\leq E_1 + E_2 \lesssim K_2(h) h^{2\min\{\alpha - d/4; 1\}}.
	\end{align*}
	In particular note that for any $\alpha > 1$, we can clearly upper-bound $K_2(h)$ by $K_2(h) \lesssim  \vert\log h\vert^{3/2}$. But if moreover $\alpha\ge 1+d/2$, we have  y definition of $\varepsilon_{h}$, $1 + d/4 +\varepsilon_{h}< 1+d/4+ \min\lbrace d/2;\alpha-d/2 \rbrace/2 \le 1+d/2 \le \alpha$, and therefore $K_2(h)=\vert \log h\vert^{1/2}$. Hence, we can conclude that if $\alpha>1$, $\| \cZ- \tilde \cZ_h\|_{L^2(\Omega;L^2(\cM))}\lesssim  K'_2(h) h^{2\min\{\alpha - d/4; 1\}}$ where $K'_2(h)=\vert\log h\vert^{3/2}$ if $1<\alpha< 1+d/2$ and $K'_2(h)=\vert\log h\vert^{1/2}$ if $\alpha\ge 1+d/2$.
	And finally gathering the estimates obtained in the two cases $\alpha\le 1$ and $\alpha>1$, we obtain the result stated in the lemma.
\end{proof}

For the error between $\big(\tilde \cZ_h\big)^{-\ell}$ and $\sZb_h$ we get the following estimate, inspired by \cite[Lemma 4.4]{Bonito2024}.

\begin{lemma}
	It holds that
	\begin{align*}
		\big\|\big(\tilde \cZ_h\big)^{-\ell} - \sZb_h\big\|_{L^2(\Omega;L^2(\cM_h))} \lesssim |\log(h)|^{(d-1)/2} ~h^2. 		
	\end{align*}
	\label{lem:lemma_bonito_error}
\end{lemma}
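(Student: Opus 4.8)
The plan is to reduce this random estimate to the deterministic \Cref{lem:error_polyhedral} applied pathwise, and then to control the resulting right-hand side in expectation via Weyl's law.

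First I would observe that $\widetilde{\cW}=\sum_{j=1}^{N_h}\xi_j e_j^h$ belongs to $S_h^\ell$ for every realization, so that \Cref{lem:error_polyhedral} applies with $\tilde f=\widetilde{\cW}$. Since $\sWb=\sP_h(\sigma\widetilde{\cW}^{-\ell})$ by definition, the two quantities appearing in that proposition are exactly $\big(\gamma(\cL_h)\widetilde{\cW}\big)^{-\ell}=\big(\tilde\cZ_h\big)^{-\ell}$ and $\gamma(\sL_h)\sP_h(\sigma\widetilde{\cW}^{-\ell})=\gamma(\sL_h)\sWb=\sZb_h$. Hence \Cref{lem:error_polyhedral} yields the pathwise bound
\begin{align*}
	\big\|\big(\tilde\cZ_h\big)^{-\ell}-\sZb_h\big\|_{L^2(\cM_h)}\lesssim h^2\,\big\|\cL_h^{-s}\widetilde{\cW}\big\|_{L^2(\cM)},\qquad s=\tfrac{1}{2}\min\{\alpha+d/4;1\}.
\end{align*}
Squaring and taking the expectation over $\Omega$ then gives $\big\|\big(\tilde\cZ_h\big)^{-\ell}-\sZb_h\big\|_{L^2(\Omega;L^2(\cM_h))}^2\lesssim h^4\,\E\big[\|\cL_h^{-s}\widetilde{\cW}\|_{L^2(\cM)}^2\big]$.

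Next I would compute this expectation explicitly. Expanding $\cL_h^{-s}\widetilde{\cW}=\sum_{j=1}^{N_h}(\lambda_j^h)^{-s}\xi_j e_j^h$ and using that $\{e_j^h\}_{1\le j\le N_h}$ is $L^2(\cM)$-orthonormal together with $\E[\xi_i\xi_j]=\delta_{ij}$, I obtain
\begin{align*}
	\E\big[\|\cL_h^{-s}\widetilde{\cW}\|_{L^2(\cM)}^2\big]=\sum_{j=1}^{N_h}(\lambda_j^h)^{-2s}.
\end{align*}
To bound this sum I would use the eigenvalue comparison $\lambda_j\le\lambda_j^h$ from \Cref{eq:SF_growth} together with the Weyl lower bound $\lambda_j\ge c_\cM j^{2/d}$ from \Cref{eq:gbounds_ev}, which gives $(\lambda_j^h)^{-2s}\lesssim j^{-\beta}$ with $\beta=2\min\{\alpha+d/4;1\}/d$, so that $\E[\,\cdot\,]\lesssim\sum_{j=1}^{N_h}j^{-\beta}$.

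Finally I would run the exponent bookkeeping. Using $\alpha>d/4$ one checks that $\beta\ge1$ in every case, with equality precisely when $d=2$ (where $\min\{\alpha+d/4;1\}=1=d/2$) and strict inequality whenever $d=1$. Consequently $\sum_{j=1}^{N_h}j^{-\beta}\lesssim1$ for $d=1$, whereas for $d=2$ the sum is harmonic and $\sum_{j=1}^{N_h}j^{-1}\lesssim\log N_h\lesssim|\log h|$ since $N_h\propto h^{-d}$. In both cases this reads $\E[\,\cdot\,]\lesssim|\log h|^{\,d-1}$, whence $\big\|\big(\tilde\cZ_h\big)^{-\ell}-\sZb_h\big\|_{L^2(\Omega;L^2(\cM_h))}\lesssim|\log h|^{(d-1)/2}h^2$ as claimed. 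The only delicate point is this last step: one must verify that the exponent $\beta$ is always at least $1$ and lands exactly on the critical value $1$ only in dimension two, which is what produces the single logarithmic factor $|\log h|^{(d-1)/2}$. Beyond this counting there is no genuine analytic difficulty, since the deterministic work has already been absorbed into \Cref{lem:error_polyhedral}.
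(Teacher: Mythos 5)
Your proposal is correct and follows essentially the same route as the paper: apply \Cref{lem:error_polyhedral} pathwise to the $S_h^\ell$-valued noise $\widetilde{\cW}$, take expectations, and reduce the problem to bounding $\sum_{j=1}^{N_h}(\lambda_j^h)^{-2s}$ via the eigenvalue comparison \eqref{eq:SF_growth} and Weyl's law \eqref{eq:gbounds_ev}. The only cosmetic difference is that you compute the Gaussian expectation and the critical harmonic sum for $d=2$ directly (getting $\log N_h\lesssim|\log h|$), whereas the paper cites \cite[Lemma 4.2]{Bonito2024} for $d=1$ and uses an $\varepsilon=|\log h|^{-1}$ regularization for $d=2$; both give the same $|\log h|^{(d-1)/2}$ factor.
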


\begin{proof}
	Let 
	\begin{equation*}
		\cE_h = \big\|\left(\tilde \cZ_h\right)^{-\ell}- \sZb_h\big\|_{L^2(\Omega;L^2(\cM_h))}=\big\| \left(\gamma(\cL_h)\widetilde{\cW}\right)^{-\ell} - \gamma(\sL_h)\sP_h(\sigma\widetilde{\cW}^{-\ell})\big\Vert_{L^2(\Omega;L^2(\cM_h))}.
	\end{equation*}
	We note that $\widetilde{\cW}$ is an $S_h^\ell$-valued random variable. Therefore, we can apply \Cref{lem:error_polyhedral} to realizations of $\widetilde{\cW}$, and take the expectation on both sides to get
	\begin{align*}
		\cE_h &\lesssim h^2 \| \cL_h^{-\min\lbrace \alpha+d/4;1\rbrace/2}\widetilde{\cW}\Vert_{L^2(\Omega;L^2(\cM))}.
	\end{align*}
	We then distinguish two cases. First, if $d=1$, then we note that $\min\lbrace \alpha+d/4;1\rbrace \ge \alpha+d/4 > d/2$ since $\alpha >d/4$.
	Besides, \cite[Lemma 4.2]{Bonito2024} yields that for all $r \in (d/2,2)$, there is a constant $C>0$ such that
	\begin{align}
		\label{eq:bonitolem42}
		\|\cL_h^{-r/2} \widetilde{\cW}\|_{L^2(\Omega;L^2(\cM))}^2 \leq C\frac{r}{r-d/2}. 
	\end{align}
	Hence, using estimate~\eqref{eq:bonitolem42}, we conclude that
	\begin{align*}
		\cE_h &\lesssim h^2 \frac{\min\lbrace \alpha+d/4;1\rbrace}{\min\lbrace \alpha+d/4;1\rbrace-d/2}\lesssim h^2.
	\end{align*}
	If now $d=2$, since  $\alpha+d/4>d/2 =1$, we have $\min\lbrace \alpha+d/4;1\rbrace =1$. 
	We then note that by the proof of \cite[Lemma 4.2]{Bonito2024}, for any $\varepsilon >0$ 
	\begin{align*}
		\| \cL_h^{-1/2}\widetilde{\cW}\Vert_{L^2(\Omega;L^2(\cM))}^2 \lesssim \sum_{j=1}^{N_h} \lambda_j^{-1} \lesssim \lambda_{N_h}^\varepsilon \sum_{j=1}^{N_h} \lambda_j^{-1-\varepsilon}. 
	\end{align*}
	Apply now \Cref{eq:gbounds_ev} to see that 
	\begin{align*}
		\|\cL_h^{-1/2} \widetilde{\cW}\|_{L^2(\Omega;L^2(\cM))}^2 \lesssim N_h^{\varepsilon} \sum_{j=1}^{N_h} j^{-(1+\varepsilon)} \lesssim N_h^{\varepsilon}\frac{1+\varepsilon}{\varepsilon} \lesssim h^{-2\varepsilon}\frac{1+\varepsilon}{\varepsilon}, 
	\end{align*}
	meaning that for any $\varepsilon >0$, 
	\begin{align*}
		\cE_h	 \lesssim \bigg(\frac{1+\varepsilon}{\varepsilon} \bigg)^{1/2}h^{2-\varepsilon} .
	\end{align*}
	In particular, taking $\varepsilon = |\log(h)|^{-1}$ yields  
	\begin{align*}
		\cE_h	 \lesssim |\log(h)|^{1/2}h^{2},
	\end{align*}
	which concludes the proof.
\end{proof}

Finally, for the error between $\sZb_h$ and $\sZ_h$ we get the following estimate.

\begin{lemma}\label{corol:err_2sfem}
	Let $h\in(0,h_0)$. It holds that  there is a constant such that 
	\begin{align*}
		\|\sZ_h - \sZb_h\|_{L^2(\Omega;L^2(\cM_h) )} \lesssim h^2. 
	\end{align*}
\end{lemma}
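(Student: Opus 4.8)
The plan is to diagonalise both fields in the $L^2(\cM_h)$-orthonormal eigenbasis $\{E_j^h\}_{1\le j\le N_h}$ of $\sL_h$, reducing the error to a single weighted sum of scalar variances. By \Cref{lem:noise-lemma} we have $\sWb=\sum_{j}\alpha_j E_j^h$ and $\sW=\sum_j\beta_j E_j^h$, hence
\begin{align*}
\sZb_h=\gamma(\sL_h)\sWb=\sum_{j=1}^{N_h}\gamma(\Lambda_j^h)\alpha_j E_j^h,\qquad \sZ_h=\gamma(\sL_h)\sW=\sum_{j=1}^{N_h}\gamma(\Lambda_j^h)\beta_j E_j^h.
\end{align*}
Because the $E_j^h$ are orthonormal in $L^2(\cM_h)$, all off-diagonal terms cancel and
\begin{align*}
\|\sZ_h-\sZb_h\|_{L^2(\Omega;L^2(\cM_h))}^2=\sum_{j=1}^{N_h}\gamma(\Lambda_j^h)^2\,\E\big[(\beta_j-\alpha_j)^2\big].
\end{align*}
So it suffices to bound each variance $\E[(\beta_j-\alpha_j)^2]$ uniformly by $\lesssim h^4$ and to show that $\sum_j\gamma(\Lambda_j^h)^2$ stays bounded in $h$.

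For the variances I would reuse the explicit representations obtained in the proof of \Cref{lem:noise-lemma}, namely $\alpha_j=(\cW,(E_j^h)^\ell)_{L^2(\cM)}$ and $\beta_j=(\cW,(\sigma^{-1/2}E_j^h)^\ell)_{L^2(\cM)}$. Since the lift is multiplicative and the test functions are real-valued, their difference is $\beta_j-\alpha_j=(\cW,((\sigma^{-1/2}-1)E_j^h)^\ell)_{L^2(\cM)}$, and the white-noise covariance identity for $\cW$ turns this into
\begin{align*}
\E\big[(\beta_j-\alpha_j)^2\big]=\big\|\big((\sigma^{-1/2}-1)E_j^h\big)^\ell\big\|_{L^2(\cM)}^2=\int_{\cM_h}\sigma\,(\sigma^{-1/2}-1)^2\,|E_j^h|^2\dd A_h,
\end{align*}
where the last equality is the change of measure \eqref{eq:sigma}. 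The algebraic simplification $\sigma(\sigma^{-1/2}-1)^2=(\sigma^{1/2}-1)^2$ then gives $\E[(\beta_j-\alpha_j)^2]\le\|\sigma^{1/2}-1\|_{L^\infty(\cM_h)}^2$, using $\|E_j^h\|_{L^2(\cM_h)}=1$. The standard SFEM geometric estimate $\|\sigma-1\|_{L^\infty(\cM_h)}\lesssim h^2$ (of the type underlying \Cref{lem:error_polyhedral}, cf.\ \cite{Dziuk2013,Bonito2020}), together with $|\sigma^{1/2}-1|\le|\sigma-1|$, yields $\E[(\beta_j-\alpha_j)^2]\lesssim h^4$ independently of $j$.

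For the spectral sum I would exploit that $\gamma$ is an $\alpha$-\psd, so $\gamma(\Lambda_j^h)^2\lesssim(\Lambda_j^h)^{-2\alpha}$, together with a uniform lower bound on the discrete eigenvalues: by \eqref{eq:BDO_error} one has $\Lambda_j^h\ge(1-Ch^2)\lambda_j^h\ge\tfrac12\lambda_j^h$ for $h\le h_0$ small, and then \eqref{eq:SF_growth} and \eqref{eq:gbounds_ev} give $\Lambda_j^h\gtrsim\lambda_j\gtrsim j^{2/d}$. Consequently
\begin{align*}
\sum_{j=1}^{N_h}\gamma(\Lambda_j^h)^2\lesssim\sum_{j=1}^{N_h}j^{-4\alpha/d}\le\sum_{j=1}^{\infty}j^{-4\alpha/d}<\infty,
\end{align*}
the convergence holding precisely because $\alpha>d/4$. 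Combining the two estimates yields $\|\sZ_h-\sZb_h\|_{L^2(\Omega;L^2(\cM_h))}^2\lesssim h^4$, which is the claimed bound.

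The main obstacle is the variance identity in the second step: one must push the lift through the product $\sigma^{-1/2}E_j^h$, apply \eqref{eq:sigma} in the correct direction, and spot the cancellation that turns $\sigma(\sigma^{-1/2}-1)^2$ into $(\sigma^{1/2}-1)^2$. This cancellation is exactly what upgrades the mere $L^\infty$-closeness of $\sigma$ to $1$ into the quadratic rate $h^4$; without it one would lose a power of $h$. The summability step is routine once $\alpha>d/4$ is invoked, and orthonormality removes any cross-term subtleties.
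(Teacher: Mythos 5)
Your proposal is correct and follows essentially the same route as the paper's proof: diagonalize in the eigenbasis $\{E_j^h\}$, reduce to $\sum_j\gamma(\Lambda_j^h)^2\,\E[|\alpha_j-\beta_j|^2]$, identify the variance as $\|(\sigma^{1/2}-1)E_j^h\|_{L^2(\cM_h)}^2\le\|\sigma^{1/2}-1\|_{L^\infty(\cM_h)}^2\lesssim h^4$ via the geometric estimate $\|\sigma-1\|_{L^\infty(\cM_h)}\lesssim h^2$, and bound the spectral sum by $\sum_j j^{-4\alpha/d}<\infty$ using the eigenvalue comparison lemma and Weyl-type growth. The only (immaterial) differences are your cleaner inequality $|\sigma^{1/2}-1|\le|\sigma-1|$ in place of the paper's Taylor-type bound, and invoking \eqref{eq:BDO_error} as a multiplicative lower bound on $\Lambda_j^h$ rather than the paper's ratio formulation.
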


\begin{proof}
	To prove this statement, we note that, using the same notations as the ones in the proof of \Cref{lem:noise-lemma},
	\begin{align*}
		\|\sZ_h - \sZb_h\|_{L^2(\Omega;L^2(\cM_h) )}^2
		&=\left\|\gamma(\sL_h)\left(\sWb-\sW\right)\right\|_{L^2(\Omega;L^2(\cM_h) )}^2 \\
		&= \E\left[\left\|\sum_{j=1}^{N_h} \gamma(\Lambda_j^h)(\alpha_j-\beta_j)E_j^h\right \|_{L^2(\cM_h)}^2\right] = \sum_{j=1}^{N_h} \gamma(\Lambda_j^h)^2 \E[\vert\alpha_j-\beta_j\vert^2],
	\end{align*}
	where we applied the orthogonality of the eigenfunctions in the last step. 
	Now, following the definition of $\alpha_j$ and $\beta_j$ given in the proof of \Cref{lem:noise-lemma},
	\begin{align*}
		\E[\vert\alpha_j-\beta_j\vert^2] 
		&=\E\bigg[\bigg\vert\big(\cW,\big((1-\sigma^{-1/2})E_j^h\big)^\ell\big)_{L^2(\cM)}\bigg\vert^2\bigg]
		= \left\|\left((1-\sigma^{-1/2})E_j^h\right)^\ell\right\|^2_{L^2(\cM)}\\
		& = \left\|(\sigma^{1/2}-1)E_j^h\right\|^2_{L^2(\cM_h)} \leq \left\|(\sigma^{1/2}-1)\right\|^2_{L^\infty(\cM_h)}.
	\end{align*}
	Besides, for all $x \in \cM_h$, it holds that 
	\begin{align*}
		\sigma^{1/2}(x)-1 {=} \sqrt{1+\sigma(x)-1} -1 \leq 	\sqrt{1+|\sigma(x)-1|} -1 \leq 1 + \frac{1}{2} |\sigma(x)-1| -1 {=} \frac{1}{2} |\sigma(x)-1|.
	\end{align*}
	Therefore, 
	\begin{align*}
		\left\|(\sigma^{1/2}-1)\right\|^2_{L^\infty(\cM_h)} \leq 	 \left\|(\sigma-1)\right\|^2_{L^\infty(\cM_h)} \leq Ch^2,
	\end{align*}
	where \cite[Lemma 4.1]{Dziuk2013} was applied in the final step. 
	We conclude that 
	\begin{align*}
		\|\sZ_h - \sZb_h\|_{L^2(\Omega;L^2(\cM_h) )}^2 \leq Ch^4\sum_{j=1}^{N_h} \gamma(\Lambda_j^h)^2,
	\end{align*}
	and it remains to show that $\sum_{j=1}^{N_h} \gamma(\Lambda_j^h)^2	$ is bounded by a constant. 
	To this end, we use \Cref{lem:ev_bound}, which implies that there exists some constant $C_\lambda>0$ such that 
	\begin{align*}
		|\Lambda_j^h/\lambda_j^h-1| \leq C_\lambda h^2. 
	\end{align*}
	Recall then that the mesh size $h$ satisfies $h<h_0$, where $h_0\in (0,1)$. Without loss of generality, let us further assume that $C_\lambda h_0 <1$. Now, by the growth assumption on $\gamma$, 
	\begin{align*}
		\sum_{j=1}^{N_h} \gamma(\Lambda_j^h)^2 
		& \lesssim \sum_{j=1}^{N_h} |\Lambda_j^h|^{-2\alpha} 
		= \sum_{j=1}^{N_h} |\lambda_j^h|^{-2\alpha} \left|1+\left(\frac{\Lambda_j^h}{\lambda_j^h}-1\right)\right|^{-2\alpha}  
		\le (1-Ch_0^2)^{-2\alpha}\sum_{j=1}^{N_h} |\lambda_j^h|^{-2\alpha}\\
		& \lesssim \sum_{j=1}^{N_h} |\lambda_j^h|^{-2\alpha}.
	\end{align*}
	Using \Cref{eq:SF_growth} and \Cref{eq:gbounds_ev}, we bound
	\begin{align*}
		\sum_{j=1}^{N_h} |\lambda_j^h|^{-2\alpha} 
		\leq\sum_{j=1}^{N_h} |\lambda_j|^{-2\alpha} 
		\lesssim  \sum_{j=1}^{\infty} j^{-4\alpha/d} = \zeta(4\alpha/d),
	\end{align*}
	where $\zeta$ denotes the Riemann zeta function. 
	Thus,  $\sum_{j=1}^{N_h} \gamma(\Lambda_j^h)^2 $ is bounded by a constant and
	\begin{align*}
		\|\sZ_h - \sZb_h\|_{L^2(\Omega;L^2(\cM_h) )}^2 \leq Ch^4,
	\end{align*}
	which proves the lemma. 
\end{proof}

Equipped with the estimates derived in \Cref{corol:err_2sfem,lem:lemma_bonito_error,lem:finest} we are ready to prove \Cref{th:err_fem}.

\begin{proof}[Proof of \Cref{th:err_fem}]
	By summing the three estimates derived in \Cref{corol:err_2sfem,lem:lemma_bonito_error,lem:finest}, we get
	\begin{align*}
		\|\cZ-  \sZ_h^\ell\|_{L^2(\Omega;L^2(\cM))} \lesssim
		C_\alpha(h) h^{2\min\{\alpha-d/4;1\}} +|\log(h)|^{(d-1)/2} ~h^2 + h^2.
	\end{align*}
	Noe then that $2\min\{\alpha-d/4;1\} \le 2$ and that $|\log(h)|^{(d-1)/2} \lesssim C_\alpha(h)$. Hence, we have the bound $|\log(h)|^{(d-1)/2} ~h^2 \lesssim C_\alpha(h) h^{2\min\{\alpha-d/4;1\}}$ which allows us to conclude that 
	\begin{align*}
		\|\cZ-  \sZ_h^\ell\|_{L^2(\Omega;L^2(\cM))} \lesssim
		C_\alpha(h) h^{2\min\{\alpha-d/4;1\}},
	\end{align*}
	and the proof is complete.
\end{proof}

Having bounded the SFEM error, we are now ready to derive the additional error associated with the Chebyshev approximation which we use to compute SFEM--Galerkin approximations in practice.

\subsection{Error analysis of the Galerkin--Chebyshev approximation}
\label{sec:conv_cheb_ap}

The error between the field $\cZ$ and its Galerkin--Chebyshev approximation $\sZ_{h,M}$ described in \Cref{sec:method} is derived by combining \Cref{th:err_fem} with an error bound between the SFEM--Galerkin approximation $\sZ_{h}$ and the Galerkin--Chebyshev approximation $\sZ_{h,M}$ obtained using \cite[Theorem 5.8]{Lang2023}. The latter bound is expressed in the next result.

\begin{lemma}	
	Let $[\lambda_{\min},\lambda_{\max}] \subset (0,\infty)$ be the interval on which the Chebyshev polynomial approximation $P_{\gamma, M}$ is computed, and let $\xi=\lambda_{\min}/(\lambda_{\max}-\lambda_{\min})$. Then, there exists a constant $C>0$ such that the error between the discretized field $\sZ_{h}$ and its approximation $\sZ_{h,M}$ is upper-bounded by
	\begin{equation}
		\begin{aligned}
			\| \sZ_{h,M}-\sZ_{h}\Vert_{L^2(\Omega;L^2(\cM_h))}
			\le   C 
			{h}^{-d/2} 
			\epsilon_\xi^{-1}(1+\epsilon_\xi)^{-M}
		\end{aligned}\label{eq:exprate}
	\end{equation}
	with $\epsilon_\xi= \xi+\sqrt{\xi(2+\xi)}$. In particular, setting $\lambda_{\min}=V_{-}$ and $\lambda_{\max}=\Lambda_{N_h}^h$, the error is bounded by
	\begin{equation}
		\begin{aligned}
			\| \sZ_{h,M}-\sZ_{h}\Vert_{L^2(\Omega;L^2(\cM_h))}
			\le   C_{V}^{-1}
			{h}^{-(d/2+1)} \exp(-C_V hM)
		\end{aligned}\label{eq:exprate_bis}
	\end{equation}
	for some constant $C_{V}>0$ proportional to $\sqrt{V_{-}}$.
	\label{prop:conv_cheb_ap}
\end{lemma}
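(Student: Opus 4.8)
The plan is to reduce the stochastic error to a deterministic uniform (sup-norm) Chebyshev approximation error, and then invoke the Bernstein-ellipse estimate quantified in \cite[Theorem 5.8]{Lang2023}. First I would expand both fields in the $L^2(\cM_h)$-orthonormal eigenbasis $\lbrace E_i^h\rbrace_{1\le i\le N_h}$ of $\sL_h$. By \Cref{lem:noise-lemma}, the weights $W_i^h=\beta_i$ appearing in \Cref{eq:z_galer} and \Cref{eq:z_cheb} are independent standard Gaussian variables, and since $\sZ_{h,M}$ differs from $\sZ_h$ only by replacing $\gamma$ with $P_{\gamma,M}$, orthonormality together with $\E[\vert W_i^h\vert^2]=1$ gives
\begin{align*}
	\Vert \sZ_{h,M}-\sZ_h\Vert_{L^2(\Omega;L^2(\cM_h))}^2
	&= \E\Bigg[\Bigg\Vert \sum_{i=1}^{N_h}\big(P_{\gamma,M}(\Lambda_i^h)-\gamma(\Lambda_i^h)\big)W_i^hE_i^h\Bigg\Vert_{L^2(\cM_h)}^2\Bigg]
	= \sum_{i=1}^{N_h}\big\vert P_{\gamma,M}(\Lambda_i^h)-\gamma(\Lambda_i^h)\big\vert^2.
\end{align*}
Since every eigenvalue $\Lambda_i^h$ of $\sL_h$ lies in $[\lambda_{\min},\lambda_{\max}]$ by construction of the interval, each summand is controlled by the uniform approximation error, whence
\begin{equation*}
	\Vert \sZ_{h,M}-\sZ_h\Vert_{L^2(\Omega;L^2(\cM_h))}\le N_h^{1/2}\sup_{\lambda\in[\lambda_{\min},\lambda_{\max}]}\big\vert P_{\gamma,M}(\lambda)-\gamma(\lambda)\big\vert .
\end{equation*}

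Next I would control the uniform error. Because $\gamma$ is an $\alpha$-\psd it is holomorphic on the right half-plane $H_{\pi/2}$, hence analytic in a complex neighbourhood of the positive interval $[\lambda_{\min},\lambda_{\max}]$. After the affine change of variable sending $[\lambda_{\min},\lambda_{\max}]$ to $[-1,1]$, the largest Bernstein ellipse on which $\gamma$ remains analytic has parameter $1+\epsilon_\xi$, where $\epsilon_\xi=\xi+\sqrt{\xi(2+\xi)}$ is exactly the geometric quantity computed in \cite[Theorem 5.8]{Lang2023} from $\xi=\lambda_{\min}/(\lambda_{\max}-\lambda_{\min})$. The classical Bernstein estimate for truncated Chebyshev series of functions analytic inside an ellipse of parameter $\rho=1+\epsilon_\xi$ then yields $\sup_{[\lambda_{\min},\lambda_{\max}]}\vert P_{\gamma,M}-\gamma\vert\lesssim \epsilon_\xi^{-1}(1+\epsilon_\xi)^{-M}$. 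Combining this with the mesh assumption $N_h\propto h^{-d}$, so that $N_h^{1/2}\lesssim h^{-d/2}$, establishes \Cref{eq:exprate}.

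It remains to specialise to $\lambda_{\min}=V_-$ and $\lambda_{\max}=\Lambda_{N_h}^h$ to obtain \Cref{eq:exprate_bis}, and this asymptotic analysis is the main point of the argument. By \Cref{lem:ev_bound} together with the Weyl-type bound \Cref{eq:gbounds_ev}, the top eigenvalue satisfies $\Lambda_{N_h}^h\sim\lambda_{N_h}\sim N_h^{2/d}\sim h^{-2}$, so that $\xi=V_-/(\Lambda_{N_h}^h-V_-)\sim V_- h^2\to 0$ as $h\to0$. For small $\xi$ one has $\sqrt{\xi(2+\xi)}=\sqrt{2\xi}\,(1+\cO(\xi))$, hence $\epsilon_\xi=\sqrt{2\xi}\,(1+o(1))=\sqrt{2V_-}\,h\,(1+o(1))$, i.e.\ $\epsilon_\xi\sim C_V h$ with $C_V$ proportional to $\sqrt{V_-}$. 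Consequently $\epsilon_\xi^{-1}\lesssim C_V^{-1}h^{-1}$, and using the elementary bound $\log(1+\epsilon_\xi)\ge \epsilon_\xi/2$ for $\epsilon_\xi$ small gives $(1+\epsilon_\xi)^{-M}=\exp(-M\log(1+\epsilon_\xi))\le\exp(-C_V hM)$. Substituting these two estimates into \Cref{eq:exprate} and retaining the factor $h^{-d/2}$ produces the claimed bound $C_V^{-1}h^{-(d/2+1)}\exp(-C_V hM)$. The delicate step throughout is the small-$\xi$ expansion of $\epsilon_\xi$: one must track the constants carefully, uniformly in $h$, so that $C_V$ can legitimately be taken proportional to $\sqrt{V_-}$ and independent of $h$, and so that the crossover between the prefactor $\epsilon_\xi^{-1}$ and the exponential decay is captured correctly.
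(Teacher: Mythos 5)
Your argument is correct and follows essentially the same route as the paper: the paper's proof simply verifies that $\gamma$ is holomorphic and bounded on the Bernstein ellipse with foci $\lambda_{\min},\lambda_{\max}$ and semi-major axis $\lambda_{\max}/2$ (which yields exactly the parameter $1+\epsilon_\xi$ you compute) and then defers to \cite[Theorem 5.8]{Lang2023}, whose content is precisely your eigenbasis reduction to the sup-norm Chebyshev error, the Bernstein estimate, and the small-$\xi$ asymptotics $\epsilon_\xi\gtrsim\sqrt{V_-}\,h$ coming from $\Lambda_{N_h}^h\lesssim h^{-2}$. Your write-up just makes explicit the details that the paper delegates to the citation.
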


\begin{proof}
	Let $E_\xi\subset \C$ be the ellipse centered at $z=(\lambda_{\min}+\lambda_{\max})/2$, with foci $z_1=\lambda_{\min}$ and $z_2=\lambda_{\max}$, and semi-major axis $a_\xi=\lambda_{\max}/2$. 
	In particular, note that $E_\xi \subset H_{\pi/2}$ and for any $z\in E_\xi$, $\Re(z)\ge \lambda_{\min}/2 >0$.  
	Hence, since $\gamma$ is an \psd, by definition $\gamma$ is holomorphic and bounded inside $E_\xi$.
	We can then adapt the same proof as in \cite[Theorem 5.8]{Lang2023} to obtain the stated proposition. 
\end{proof}

Hence, for a fixed mesh size $h$, the approximation error $\| \sZ_{h,M}-\sZ_{h}\Vert_{L^2(\Omega;L^2(\cM_h))}$ converges to $0$ as the order of the polynomial approximation $M$ goes to infinity.  
Choosing $M$ as a function of $h$ that grows fast enough then allows us to ensure the convergence of the approximation error as $n$ goes to infinity \cite[Section 5.2]{Lang2023}.
In particular, by taking $M=M_h=(d/2+3)C_V^{-1}h^{-1}\vert \log h\vert=(d/2+3)C_V^{-1}h^{-1} \log (h^{-1})$ the bound~\eqref{eq:exprate_bis} becomes
\begin{equation}
	\begin{aligned}
		\| \sZ_{h,M}-\sZ_{h}\Vert_{L^2(\Omega;L^2(\cM_h))}
		&\le   C_{V}^{-1} \exp((d/2+1)\log (h^{-1})-(d/2+3)\log (h^{-1}))\\
		%= C_{V}^{-1} \exp(-2\log (h^{-1}))
		&= C_{V}^{-1} h^2.
	\end{aligned}\label{eq:exprate_ter}
\end{equation}

Putting together \Cref{th:err_fem} and \Cref{eq:exprate_ter}, we can now state the following result which provides a bound between the field $\cZ$ and its Galerkin--Chebyshev approximation $\sZ_{h,M_h}$.

\begin{theorem}
	Let $\gamma$ be an $\alpha$-\psd with $\alpha>d/4$, let $h\in(0,h_0)$ and take $M_h=(d/2+3)C_V^{-1}h^{-1}\vert \log h\vert$. Then,
	The strong error between the random field $\mathcal{Z}$ by its Galerkin--Chebyshev approximation $\sZ_{h,M_h}^\ell$  satisfies the bound 
	\begin{equation}
		\label{eq:err_str_gc}
		\|\cZ-  \sZ_{h,M_h}^\ell\|_{L^2(\Omega;L^2(\cM))} \lesssim
		C_\alpha(h) h^{2\min\{\alpha-d/4;1\}},
	\end{equation}
	where  $C_\alpha(h)=\vert \log h\vert$ if $d/4<\alpha \le 1$, $C_\alpha(h)=\vert \log h\vert^{3/2}$ if $1<\alpha < 1+d/2$, and $C_\alpha(h)=\vert \log h\vert^{1/2}$ if $\alpha \ge 1+d/2$.
	\label{th:err_gc}
\end{theorem}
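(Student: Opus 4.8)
The plan is to combine the two error estimates already established---the SFEM discretization error of \Cref{th:err_fem} and the Chebyshev approximation error of \Cref{eq:exprate_ter}---via a single application of the triangle inequality. First I would split
\begin{align*}
	\|\cZ-  \sZ_{h,M_h}^\ell\|_{L^2(\Omega;L^2(\cM))}
	&\le \|\cZ-  \sZ_h^\ell\|_{L^2(\Omega;L^2(\cM))} + \|\sZ_h^\ell - \sZ_{h,M_h}^\ell\|_{L^2(\Omega;L^2(\cM))}.
\end{align*}
The first term is controlled directly by \Cref{th:err_fem}, which yields the bound $C_\alpha(h) h^{2\min\{\alpha-d/4;1\}}$ with exactly the logarithmic factor $C_\alpha(h)$ claimed in the statement.

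For the second term, I would pass from $\cM$ to $\cM_h$ using the lift. Since both $\sZ_h^\ell$ and $\sZ_{h,M_h}^\ell$ are lifts of functions in $S_h$, and since the $L^2(\cM)$ and $L^2(\cM_h)$ norms on lifted finite element functions are equivalent (the lift being an isomorphism with bounded distortion governed by the area element $\sigma$), we have
\begin{align*}
	\|\sZ_h^\ell - \sZ_{h,M_h}^\ell\|_{L^2(\Omega;L^2(\cM))} \lesssim \|\sZ_h - \sZ_{h,M_h}\|_{L^2(\Omega;L^2(\cM_h))}.
\end{align*}
The right-hand side is bounded by $C_V^{-1} h^2$ for the prescribed choice $M_h=(d/2+3)C_V^{-1}h^{-1}\vert \log h\vert$, exactly as computed in \Cref{eq:exprate_ter} (which in turn rests on \Cref{prop:conv_cheb_ap}).

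The final step is to absorb the Chebyshev error into the SFEM error. Since $\min\{\alpha-d/4;1\}\le 1$ and $h\in(0,h_0)\subset(0,1)$ with $C_\alpha(h)\ge 1$, we have $h^{2}\le h^{2\min\{\alpha-d/4;1\}}\le C_\alpha(h)\,h^{2\min\{\alpha-d/4;1\}}$, so the term $C_V^{-1}h^2$ is dominated by the SFEM rate. Combining the two bounds then gives the claimed estimate with the stated $\alpha$-dependent logarithmic factor. I do not expect a genuine obstacle here: all the analytic content is packaged in \Cref{th:err_fem} and \Cref{prop:conv_cheb_ap}, and the only points requiring a little care are the norm equivalence used to transport the Chebyshev error from $\cM_h$ back to $\cM$, and the elementary verification that the $h^2$ rate is never slower than the overall rate $h^{2\min\{\alpha-d/4;1\}}$.
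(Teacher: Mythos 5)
Your proposal is correct and follows exactly the paper's (largely implicit) argument: a triangle inequality splitting into the SFEM error controlled by \Cref{th:err_fem} and the Chebyshev error controlled by \Cref{eq:exprate_ter}, with the $C_V^{-1}h^2$ term absorbed into $C_\alpha(h)h^{2\min\{\alpha-d/4;1\}}$ since the exponent is at most $1$ and $C_\alpha(h)\ge 1$. The transport of the Chebyshev bound from $\cM_h$ back to $\cM$ via the equivalence of the lifted $L^2$ norms is also the same device the paper uses in the proof of \Cref{th:err_fem}, so no new content is needed.
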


\begin{remark}\label{rem:cheb}
	In practice, it is common to \q{chop} a Chebyshev polynomial approximation, i.e. to truncate the polynomial approximation at an early order.  Indeed, let us assume that we have computed the coefficients $\lbrace c_k\rbrace_{0\le k\le M_h}$ of the Chebyshev series up to the order $M=M_h$ and let $c_{\max} = \max_{0\le k \le M_h} \vert c_k\vert$. Let $\varepsilon>0$ be fixed but arbitrary and assume that there exists  $m_\varepsilon \in \lbrace 0,\dots, M_h-1\rbrace$ such that for any $k>m_\varepsilon$, $\vert c_k \vert / c_{\max} < \varepsilon$. The error between the field $\sZ_{h,M_h}$ and its \q{chopped} counterpart $\sZ_{h,m_\varepsilon}$ is given by
	\begin{equation*}
		\begin{aligned}
			\| \sZ_{h,M_h}-\sZ_{h,m_\varepsilon}\Vert_{L^2(\Omega;L^2(\cM_h))}^2
			&=\| \sum_{i=1}^{N_h} (P_{\gamma,M_h}(\Lambda_{i}^h)-P_{\gamma,m_\varepsilon}(\Lambda_{i}^h))W_i^h E_i^h\Vert_{L^2(\Omega;L^2(\cM_h))}^2\\
			&=\sum_{i=1}^{N_h} (P_{\gamma,M_h}(\Lambda_{i}^h)-P_{\gamma,m_\varepsilon}(\Lambda_{i}^h))^2
		\end{aligned}
	\end{equation*}
	where $P_{\gamma,M_h}(\lambda)=\sum_{k=0}^{M_h}c_k T_k(\lambda)$ is the $M_h$-th order Chebyshev approximation of $\gamma$ and $T_k$ denotes the $k$-th (shifted) Chebyshev polynomial. In particular, we have $\vert P_{\gamma,M_h}(\Lambda_{i}^h)-P_{\gamma,m_\varepsilon}(\Lambda_{i}^h)\vert =\vert\sum_{k=m_\varepsilon+1}^{M_h}c_k T_k(\lambda)\vert \le \sum_{k=m_\varepsilon+1}^{M_h}\vert c_k \vert~\vert T_k(\lambda)\vert \le \sum_{k=m_\varepsilon+1}^{M_h}\vert c_k \vert \le (M_h-m_\varepsilon)c_{\max} \varepsilon \le M_hc_{\max}\varepsilon$. Hence, the error between $\sZ_{h,M_h}$ and its \q{chopped} counterpart $\sZ_{h,m_\varepsilon}$ satisfies
	\begin{equation*}
		\begin{aligned}
			\| \sZ_{h,M_h}-\sZ_{h,m_\varepsilon}\Vert_{L^2(\Omega;L^2(\cM_h))}
			%&=\bigg(\sum_{i=1}^{N_h} (P_{\gamma,M_h}(\Lambda_{i}^h)-P_{\gamma,m_\varepsilon}(\Lambda_{i}^h))^2\bigg)^{1/2}\\
			&\le N_h^{1/2} M_h c_{\max}\varepsilon \lesssim h^{-(1+d/2)}\vert \log h\vert~\varepsilon
		\end{aligned}
	\end{equation*}
	Hence, if we have $\varepsilon \lesssim \vert \log h \vert^{-1} h^{(3+d/2)}$ then we can conclude that $	\| \sZ_{h,M_h}-\sZ_{h,m_\varepsilon}\Vert_{L^2(\Omega;L^2(\cM_h))} \lesssim h^2$, thus implying that once again we have $	\|\cZ-  \sZ_{h,m_\varepsilon}^\ell\|_{L^2(\Omega;L^2(\cM))} \lesssim
	C_\alpha(h) h^{2\min\{\alpha-d/4;1\}}$. This means that in practice, if we notice that the coefficients of the Chebyshev polynomial approximation $\lbrace c_k\rbrace_{0\le k\le M_h}$ decay fast enough that we can find some $m_\varepsilon \in \lbrace 0,\dots, M_h-1\rbrace$ such that for any $k>m_\varepsilon$, $\vert c_k \vert / c_{\max} < \varepsilon \lesssim \vert \log h \vert^{-1} h^{(3+d/2)}$, we can replace the field $\sZ_{h,M_h}$ by its chopped counterpart while still maintaining the same strong error bound with respect to the field $\cZ$.
\end{remark}

\section{Numerical experiments}
\label{sec:num}
In this section, we present numerical experiments confirming the strong error bound in \Cref{eq:err_str_gc}. We consider two cases: when the manifold $\cM$ is a circle (hence $d=1$), and when the manifold $\cM$ is a sphere (hence $d=2$). Indeed, in both cases we are able to easily define nested meshes of various sizes $h$, and define appropriate noise projections at the different levels.

In each case, the field $\cZ$ is approximated by a Galerkin--Chebyshev field $\sZ_{h_{\text{fine}},M_{h_{\text{fine}}}}$ computed at a very fine mesh size $h_{\text{fine}}$. More precisely, 25 samples of 
$\lbrace \sZ_{h_{\text{fine}},M_{h_{\text{fine}}}}^{(i)} \rbrace_{1\le i\le 25}$ of the field $\sZ_{h_{\text{fine}},M_{h_{\text{fine}}}}$ are computed.
In particular, let us denote by $\sW^{(i)}_{h_{\text{fine}}}$ the projected white noise used to compute the sample $\cZ^{(i)}=P_{\gamma,M_{h_{\text{fine}}}}(\sL_{h_{\text{fine}}})\sW^{(i)}_{h_{\text{fine}}}$. 
For a coarse mesh size $h>h_{\text{fine}}$, each sample $\cZ^{(i)}$ is compared to the sample of $\sZ_{h,M_h}^{(i)}$ computed from the white noise $\sW^{(i)}_{h}$ obtained  by projecting the noise $\sW^{(i)}_{h_{\text{fine}}}$ (defined in the fine mesh) down to the coarse mesh.
Note that, following \Cref{rem:cheb}, the Galerkin--Chebyshev fields are systematically computed by chopping the Chebyshev polynomial approximations at a small level $\epsilon=10^{-12}$.
Besides, when computing the representation of a Galerkin--Chebyshev field in the nodal basis through \eqref{eq:compute_gc}, the matrix $\sqrt{\bm C}$ is computed as a Cholesky factorization of the mass matrix $\bm C$ in the first experiment, and is approximate using a (diagonal) mass lumping approach in the second experiment. Finally, the strong error at a mesh size $h$ is then estimated from the samples using 
\begin{equation}
	\|\cZ-  \sZ_{h,M_h}^\ell\|_{L^2(\Omega;L^2(\cM))}
	\approx \bigg(\frac{1}{25} \sum_{i=1}^{25}  \|\sZ_{h_{\text{fine}},M_{h_{\text{fine}}}}^{(i)}-  \sZ_{h,M_h}^{(i)}\|_{L^2(\cM_h)}^2 \bigg)^{1/2}.\label{eq:rmse}
\end{equation}

In the first experiment, we consider a random field $\cZ$ defined on the unit circle $\cM$ with \psd given by the formula $\gamma(\lambda)=(V_{0})^{\alpha}(\lambda+\cos(0.9\pi)\sqrt{\lambda})^{-\alpha}$ where $V_0=10^4$ and for $\alpha \in\lbrace 0.5,1.05,1.5\rbrace$. The bilinear form $\cM$ is taken as follows. The diffusion matrix $\mathcal{D}$ is defined by $\mathcal{D}(x)= I -\nu(x)\nu(x)^T$, $x\in\cM$, where $\nu(x)=x/\Vert x\Vert$ is the normal vector to a point $x\in\cM$ of the unit circle.  The function $V$ is defined as $V(x)=3V_0$, $x\in \cM$, if $\theta(x)\in ]\pi/2, 3\pi/2[$ and $V(x)=V_0$ otherwise, and $\theta(x)\in [0,2\pi[$ denotes the circular coordinate of the point $x\in\cM$. 
The fine mesh size is taken to be $h_{\text{fine}}=2^{-19}\pi$ and the coarse mesh sizes are $h=\lbrace 2^{-13}\pi, 2^{-14}\pi, 2^{-15}\pi, 2^{-16}\pi\rbrace$. An example of sample (on the fine mesh) obtained with this choice of parameters is displayed in \Cref{fig:err_circle_sample}. The results of this first experiment are presented in \Cref{fig:err_circle}, where the log of the strong error~\eqref{eq:rmse} is plotted against the log of the mesh size $h$, for the three \psd obtained by taking $\alpha \in\lbrace 0.5,1.05,1.5\rbrace$. As shown in the figure, we retrieve the rates (respectively $0.5$,$1.6$ and $2$) predicted by \Cref{th:err_gc}.

\begin{figure}
	\centering
	\subfigure[Sample (in green) of the random field with $\alpha=1.75$ on the fine circle mesh (in black). The distance to the circle indicates the value of taken by the field.]{\includegraphics[width=0.35\textwidth]{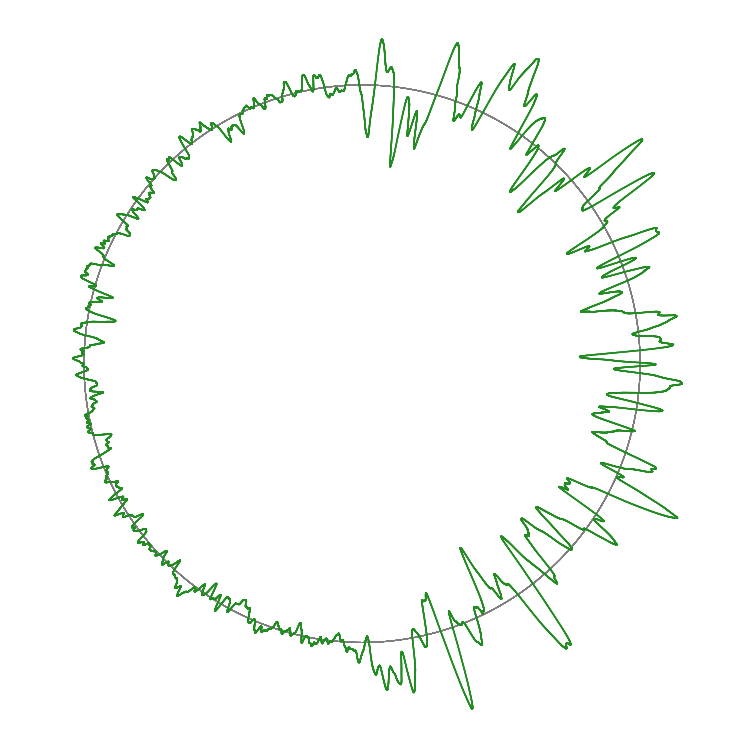}\label{fig:err_circle_sample}}
	\subfigure[Strong error as a function the mesh size.]{\includegraphics[width=0.64\textwidth]{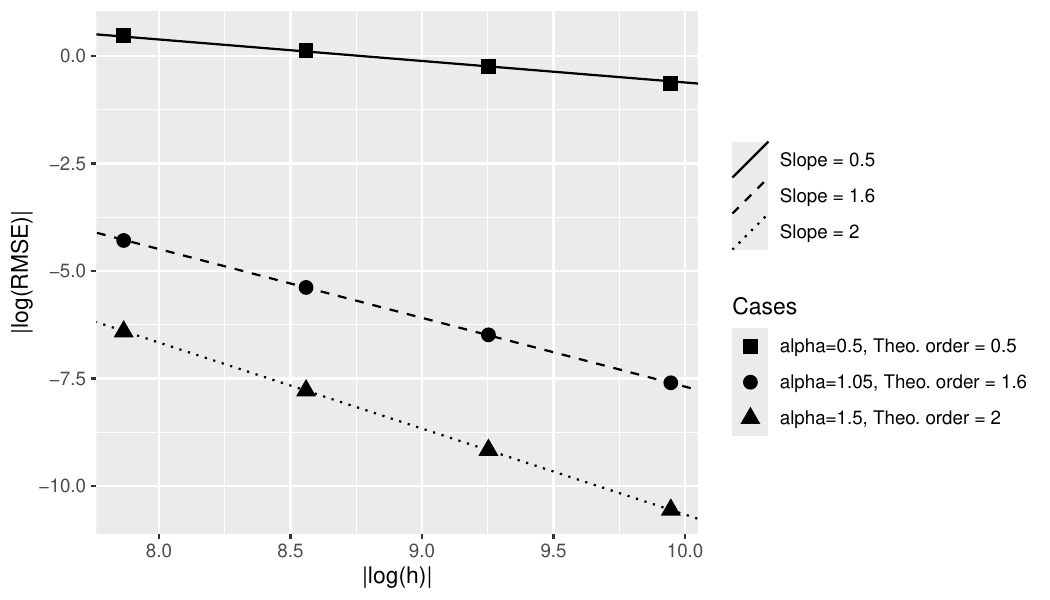}\label{fig:err_circle}}
	\caption{First numerical experiment (on the circle).}
\end{figure}

In the second experiment, we consider a random field $\cZ$ defined on the unit sphere $\cM$ with \psd given by the formula $\gamma(\lambda)=C_{0}\lambda^{-\alpha}$ where $C_0=500$ and for $\alpha \in\lbrace 0.75,1.25,2.25\rbrace$. The bilinear form $\cM$ is taken as follows. The diffusion matrix $\mathcal{D}$ is defined $\mathcal{D}(x)= \nabla_{\cM}f(x)(\nabla_{\cM}f(x))^{T} +\rho(x)\nabla_{\cM}^\perp f(x)(\nabla_{\cM}^\perp f(x))^{T}$, where  $f(x)=2\cos(\theta(x))\cos(\phi(x))\sin^2(\theta(x))$ and $\rho(x)=0.1+0.6/(1+e^{-4\cos(\theta(x))})$. Here $x\in\cM$ and  $(\theta(x),\phi(x))$ denotes spherical coordinates.
The function $V$ is defined as $V(x)=500(1+5\cos^2(\pi\theta(x)))$. 
The fine mesh size is taken to be $h_{\text{fine}}= 2.16\times 10^{-3}$ and the coarse mesh sizes are $h=\lbrace %6.88\times 10^{-2}, 
3.45\times 10^{-2}, 1.73\times 10^{-2},  8.63\times 10^{-3},  4.32\times 10^{-3} \rbrace$.  An example of sample (on the fine mesh) obtained with this choice of parameters is displayed in \Cref{fig:err_sphere_sample}.
The results of this second experiment are presented in \Cref{fig:err_sphere}, where once again the log of the strong error~\eqref{eq:rmse} is plotted against the log of the mesh size $h$, for the three \psd obtained by taking $\alpha \in\lbrace 0.75,1.25,2.25\rbrace$. As seen in \Cref{fig:err_sphere}, we retrieve the rates (respectively $0.5$,$1.5$ and $2$) predicted by \Cref{th:err_gc}.

\begin{figure}
	\centering
	\subfigure[Sample of the random field with $\alpha=1.75$ on the fine mesh. The colors indicate the value of taken by the field.]{\includegraphics[trim={0 0 2.1cm 0},clip,width=0.35\textwidth]{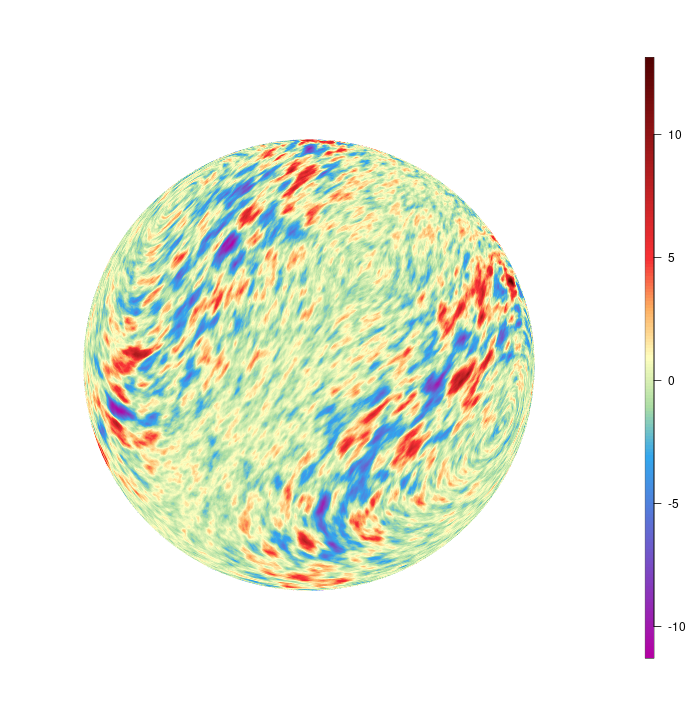}\label{fig:err_sphere_sample}}
	\subfigure[Strong error as a function the mesh size.]{\includegraphics[width=0.64\textwidth]{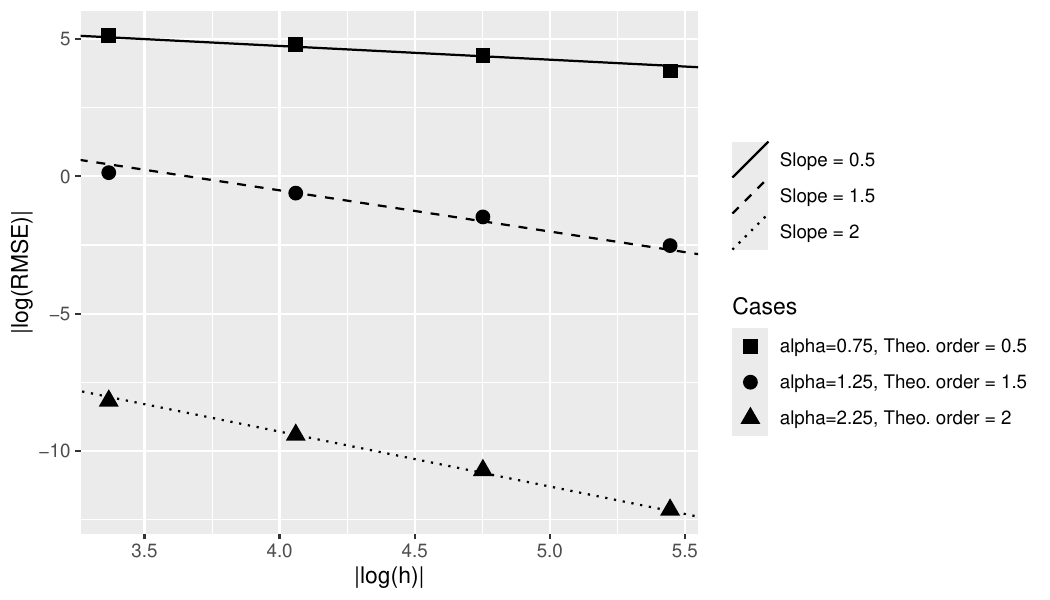}\label{fig:err_sphere}}
	\caption{Second numerical experiment (on the sphere).}
\end{figure}

\bibliography{references.bib}

\appendix

\section{Deterministic proofs}
\label{app:detproofs}

\subsection{Proof of \Cref{prop:spectral}}

\begin{proof}
	A standard result in the spectral theory of elliptic operators on compact Riemannian manifolds (see e.g. \cite[Section 8]{Shubin2001}) ensures that there exists a set of eigenpairs $\lbrace (\lambda_i, f_i)\rbrace_{i\in\N}$ of $\cL$
	such that  $0\le\lambda_1 \le \lambda_2 \le \cdots $ with $\lambda_i \rightarrow +\infty$ as $i\rightarrow +\infty$, and  $\lbrace f_i\rbrace_{i\in\N}$ is an orthonormal basis of $L^2(\cM)$ composed of possibly complex-valued functions. 
	
	Hence, let us prove that $\lambda_{1}\ge \delta$ and that we can build an orthonormal basis $\lbrace e_i\rbrace_{i\in\N}$ of~$L^2(\cM)$ such that each $e_i$ is real-valued and an eigenfunction of $\cL$ with eigenvalue $\lambda_{i}$.
	On the one hand, by definition of $\cL$, $\lambda_{1}$ and $f_1$, we obtain
	\begin{equation*}
		\lambda_{1}=\lambda_{1}(f_1, f_1)_{L^2(\cM)}=\mathsf{A}_{\cM}(f_1,f_1) \ge \delta \Vert f_1\Vert_{H^1(\cM)}^2 \ge \delta \Vert f_1\Vert_{L^2(\cM)}^2 = \delta,
	\end{equation*}
	where for the last two inequalities we used the coercivity of $\mathsf{A}_{\cM}$ and the definition of the $H^1$-norm.
	
	On the other hand, let $\lambda >0$ be one of the eigenvalues of $\cL$, and $E_\lambda \subset L^2(\cM)$ the associated eigenspace. Following again the results from \cite[Section 8]{Shubin2001}, we get that $E_\lambda \subset C^\infty(\cM)$, $\dim E_{\lambda} < \infty$ and that if $\lambda' \neq \lambda$ is another eigenvalue of $\cL$, then $E_\lambda$ and $E_{\lambda'}$ are orthogonal. Besides,  $E_\lambda$ is in fact generated by the set $\lbrace f_j\rbrace_{j\in J_\lambda}$, where $J_\lambda =\lbrace i \in \N : \lambda_{i} = \lambda\rbrace$ is finite (since $\dim E_{\lambda} < \infty$).
	
	Take then $u\in E_\lambda$. Hence, for any $v\in H^1(\cM)$, $\mathsf{A}_{\cM}(u,v) = \lambda (u,v)_{L^2(\cM)}$. But also, by definition of $\mathsf{A}_{\cM}$,
	\begin{equation*}
		\mathsf{A}_{\cM}(\cj{u},v) = \mathsf{A}_{\cM}(\cj{v},u)=\cj{\mathsf{A}_{\cM}(u,\cj{v})}
		=\cj{\lambda (u,\cj{v})_{L^2(\cM)}}=\lambda (\cj{u},v)_{L^2(\cM)},
	\end{equation*}
	where used the fact that $\mathcal{D}$ is a real symmetric matrix and $V$ is real-valued for the first two equalities. Consequently, we also have $\cj{u}\in E_\lambda$, and so, the real-valued functions $\Re(u)=(u+\cj{u}))/2$ and $\Im(u)=(u-\cj{u}))/2i$ (corresponding to real and imaginary parts of $u$) are also in $E_\lambda$. 
	
	Circling back to the orthonormal basis $\lbrace f_j\rbrace_{j\in J_\lambda}$ of $E_\lambda$, we consider the set of real-valued functions $F_\lambda = \lbrace \Re(f_j)\rbrace_{j\in J_\lambda}\cup \lbrace \Im(f_j)\rbrace_{j\in J_\lambda} \subset E_\lambda$, and the subspace $V_\lambda \subset E_\lambda$ generated by $F_\lambda$. In particular $\dim V_\lambda \le \dim E_\lambda$. By applying the Gram--Schmidt orthogonalization process to $F_\lambda$, we get an orthonormal basis $\lbrace e_k\rbrace_{1\le k\le \dim V_\lambda}$ of $V_\lambda$ which by construction is composed of real-valued functions (since $F_\lambda$ is composed of real-valued functions). Let us show that $\lbrace e_k\rbrace_{1\le k\le \dim V_\lambda}$ is in fact a basis of $E_\lambda$, or equivalently that $\dim V_\lambda = \dim E_\lambda$.
	
	We proceed by contradiction. Assume that $\dim V_\lambda < \dim E_\lambda$. This means in particular that the orthogonal complement of $V_\lambda $ in $E_\lambda$, denoted by $V_\lambda^\perp$, is not reduced to $0$. Let then $0\neq w \in V_\lambda^\perp$. By linearity, we have, for any $j\in J_\lambda$, $(f_j,w)_{L^2(\cM)}=(\Re(f_j),w)_{L^2(\cM)} + i(\Im(f_j),w)_{L^2(\cM)} = 0$, since $\Re(f_j),\Im(f_j) \in F_\lambda \subset V_\lambda$. Hence, since $w\in E_\lambda$ and $\lbrace f_j\rbrace_{j\in J_\lambda}$ is an orthonormal basis of $E_\lambda$, it must hold that $w=0$, which contradicts our initial claim. Consequently, $\dim V_\lambda = \dim E_\lambda$, and therefore $\lbrace e_k\rbrace_{1\le k\le \dim V_\lambda}$ is an orthonormal basis of $E_\lambda$.
	
	Finally, by repeating the construction above to each eigenspace $E_\lambda$ associated with distinct eigenvalues, and concatenating the obtained bases, we obtain an orthonormal basis $L^2(\cM)$ (due to the fact that these eigenspaces are orthogonal to one another and span $L^2(\cM)$). Each element in this basis is an eigenfunction of $\cL$ since it is built from a given eigenspace, and is a real-valued function. This concludes our proof.
\end{proof}

\subsection{A useful inequality}
We end this section by introducing a lemma which will be used to obtain practical error bounds.

\begin{lemma}\label{lem:min}
	For any $a,b\in\R$ and any $\varepsilon>0$, $\min\{a-\varepsilon; b\} + \varepsilon \ge \min\{a; b\} $. 
\end{lemma}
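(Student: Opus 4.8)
The plan is to prove this by reducing it to the elementary monotonicity property that the minimum of two quantities is bounded below by any common lower bound of them both. The key observation is that each of the two arguments appearing in $\min\{a-\varepsilon; b\}$ is bounded below by $\min\{a; b\} - \varepsilon$. Indeed, on the one hand $a - \varepsilon \ge \min\{a; b\} - \varepsilon$ since $a \ge \min\{a;b\}$, and on the other hand $b \ge \min\{a; b\} \ge \min\{a; b\} - \varepsilon$ because $\varepsilon > 0$.

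Since both $a - \varepsilon$ and $b$ dominate the common value $\min\{a;b\} - \varepsilon$, so does their minimum, giving
\begin{equation*}
	\min\{a - \varepsilon; b\} \ge \min\{a; b\} - \varepsilon.
\end{equation*}
Adding $\varepsilon$ to both sides then yields $\min\{a - \varepsilon; b\} + \varepsilon \ge \min\{a; b\}$, which is exactly the claim.

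An equally short alternative that I would mention is a direct case distinction on which term realizes the minimum on the left-hand side. If $a - \varepsilon \le b$, then $\min\{a - \varepsilon; b\} + \varepsilon = a \ge \min\{a; b\}$; if instead $b \le a - \varepsilon$, then $\min\{a - \varepsilon; b\} + \varepsilon = b + \varepsilon \ge b \ge \min\{a; b\}$, using $\varepsilon > 0$ in the penultimate inequality. Either route is entirely routine, and there is no genuine obstacle here: the statement is purely order-theoretic and the only thing to be careful about is keeping track of the sign of $\varepsilon$, which enters only through the trivial bound $b + \varepsilon \ge b$.
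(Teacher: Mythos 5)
Your proof is correct; the case-distinction variant you mention at the end is essentially the paper's own proof (the paper just splits into three cases instead of two), and your primary monotonicity argument is a trivially equivalent, slightly cleaner rephrasing. Nothing is missing.
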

\begin{proof}
	Indeed, if $a<b$, we have $\min\{a-\varepsilon; b\} + \varepsilon = a-\varepsilon+\varepsilon=a = \min\{a; b\}$. If $b\le a <b+\varepsilon$,  $\min\{a-\varepsilon; b\} + \varepsilon = a-\varepsilon+\varepsilon=a \ge b=\min\{a; b\}$. And finally if $a\ge b+\varepsilon$, $\min\{a-\varepsilon; b\} + \varepsilon = b+\varepsilon>  b=\min\{a; b\}$. 
\end{proof}

\section{Error estimates}\label{app:error_discr_op}

\subsection{Geometric consistency estimate}\label{app_sub:geom_consist}

The following geometric consistency estimate quantifies the error between the bilinear forms $\mathsf{A}_{\cM}$ and $\mathsf{A}_{\cM_h}$.
Its proof is a straightforward adaptation of the proof of \cite[Lemma 4.7]{Dziuk2013} to account for the diffusion matrix $\cD$.
\begin{lemma}
	\label{lem:geometric-consistency-A}
	There is a constant $C>0$ such that for all $h < h_0$ and $u_h,v_h \in S_h^\ell$, 
	\begin{align}
		\label{eq:perterror}
		\left|\mathsf{A}_\cM(u_h^\ell,v_h^\ell)-\mathsf{A}_{\cM_h}(u_h,v_h)\right| \leq C h^2  \|u_h^\ell\|_{H^1(\cM)}\|v_h^{\ell}\|_{H^1(\cM)}.
	\end{align}
\end{lemma}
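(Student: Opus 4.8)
The plan is to transform the discrete bilinear form $\mathsf{A}_{\cM_h}(u_h,v_h)$ into an integral over $\cM$ via the lift, so that both forms live on the same domain and can be compared through a pointwise matrix estimate. Following the proof of \cite[Lemma 4.7]{Dziuk2013}, I would treat the diffusion (first-order) term and the potential (zeroth-order) term separately.

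For the potential term, the area-element identity \eqref{eq:sigma}, applied with the observation that $V^{-\ell}(x)=V(p(x))$, $u_h\cj v_h=(u_h^\ell\cj{v_h^\ell})^{-\ell}$, and $\sigma^{-1}=((\sigma^{-1})^\ell)^{-\ell}$, yields $\int_{\cM_h} V^{-\ell} u_h \cj v_h \dd A_h = \int_\cM V u_h^\ell \cj{v_h^\ell}\,(\sigma^{-1})^\ell \dd A$. Subtracting $\int_\cM V u_h^\ell\cj{v_h^\ell}\dd A$ and applying Cauchy--Schwarz bounds the difference by $\norm{V}{L^\infty(\cM)}\,\norm{1-(\sigma^{-1})^\ell}{L^\infty(\cM)}\,\norm{u_h^\ell}{L^2(\cM)}\,\norm{v_h^\ell}{L^2(\cM)}$, and since $\norm{1-\sigma}{L^\infty(\cM_h)}\lesssim h^2$ by \cite[Lemma 4.1]{Dziuk2013} (and $\sigma$ is uniformly bounded away from $0$), this supplies the factor $h^2$.

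For the diffusion term I would invoke the Dziuk--Elliott gradient--lift identity, which furnishes a matrix $\mathbf{R}_h$ built from the tangential projections $\mathbf{P}=I-\nu\nu^T$ and $\mathbf{P}_h=I-\nu_h\nu_h^T$, the signed distance $d_s$, and the Weingarten map $\mathbf{H}$, such that $\nabla_{\cM_h} u_h=\mathbf{R}_h\,\nabla_\cM u_h^\ell$ after composition with $p$. Inserting this, using $\cD^{-\ell}=\cD\circ p$, and changing variables as above rewrites the discrete diffusion term as $\int_\cM (\tilde{\mathbf{A}}_h\nabla_\cM u_h^\ell)\cdot\nabla_\cM\cj{v_h^\ell}\dd A$ with $\tilde{\mathbf{A}}_h=(\sigma^{-1})^\ell\,\mathbf{R}_h^T\cD\,\mathbf{R}_h$ (here $\cD$ real symmetric is used), whereas the continuous term is $\int_\cM(\cD\nabla_\cM u_h^\ell)\cdot\nabla_\cM\cj{v_h^\ell}\dd A$. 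As $\nabla_\cM u_h^\ell$ and $\nabla_\cM v_h^\ell$ are tangential to $\cM$, it then suffices to prove $\norm{\tilde{\mathbf{A}}_h-\cD}{L^\infty(\cM)}\lesssim h^2$, after which Cauchy--Schwarz and $\norm{\nabla_\cM w}{L^2(\cM)}\le\norm{w}{H^1(\cM)}$ close the estimate.

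The main obstacle, and the only genuine departure from the scalar Laplace--Beltrami case of \cite{Dziuk2013}, is establishing $\norm{\tilde{\mathbf{A}}_h-\cD}{L^\infty(\cM)}\lesssim h^2$ even though the normal discrepancy $\nu_h-\nu$ is only of order $h$. The decisive point is that $\mathbf{P}\nu_h=\mathbf{P}(\nu_h-\nu)$ is $O(h)$ but enters quadratically: modulo the $O(h^2)$ contributions of $d_s\mathbf{H}$ and of $1-\sigma$, one has $\mathbf{R}_h^T\cD\,\mathbf{R}_h\approx\mathbf{P}\mathbf{P}_h\,\cD\,\mathbf{P}_h\mathbf{P}$, and since $\cD$ maps each tangent space into itself with $\cD\nu=0$ we may write $\cD=\mathbf{P}\cD\mathbf{P}$, giving $(\mathbf{P}\mathbf{P}_h\mathbf{P})\,\cD\,(\mathbf{P}\mathbf{P}_h\mathbf{P})$ with $\mathbf{P}\mathbf{P}_h\mathbf{P}=\mathbf{P}-(\mathbf{P}\nu_h)(\mathbf{P}\nu_h)^T=\mathbf{P}+O(h^2)$. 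Because $\cD$ is bounded (its coefficients are smooth with eigenvalues uniformly bounded on $\cM$), the product collapses to $\mathbf{P}\cD\mathbf{P}+O(h^2)=\cD+O(h^2)$, exactly reproducing the scalar cancellation. Thus inserting the tangential, bounded diffusion matrix preserves the second-order cancellation, and I expect this geometric bookkeeping to be the delicate part of the argument.
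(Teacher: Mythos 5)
Your proposal is correct and follows essentially the same route as the paper's proof: both lift the discrete form to $\cM$ via the area element, reduce the diffusion term to a pointwise bound $\|\cQ_h-\cD\|\lesssim h^2$ for the effective matrix $(\sigma^\ell)^{-1}\Pi(I-d_s^\ell\cH)\Pi_h^\ell\cD\Pi_h^\ell(I-d_s^\ell\cH)\Pi$, and exploit $\cD=\Pi\cD\Pi$ together with $\|\Pi\Pi_h^\ell\Pi-\Pi\|\lesssim h^2$ (your explicit identity $\Pi\Pi_h\Pi=\Pi-(\Pi\nu_h)(\Pi\nu_h)^T$ with $\Pi\nu_h=\Pi(\nu_h-\nu)=O(h)$ entering quadratically is exactly the mechanism the paper invokes by citing Dziuk--Elliott). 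The only cosmetic difference is the final Cauchy--Schwarz step, where the paper uses the $V$- and $\cD$-weighted norms and the continuity of $\mathsf{A}_\cM$ while you use $\|V\|_{L^\infty}$ and the uniform eigenvalue bounds on $\cD$ directly; both yield the stated $H^1$ bound.
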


\begin{proof}
	We first note that, by definition of $\cD$, for any $x_0\in\cM$, and any $w,w'\in T_{x_0}\cM$, $(\cD(x_0) w)\cdot \cj{w'}$ defines an inner product on $T_{x_0}\cM$.
	We denote by $\|\cdot\|$ the usual Euclidean norm of vectors of $T_{x_0}\cM\subset \C^{d+1}$ and by $\|\cdot\|_{\cD(x_0)}$ the norm defined by  $\|w\|_{\cD(x_0)}^2 = (\cD(x_0) w) \cdot \cj{w}$, $w\in T_{x_0}\cM$.
	
	Let $\Pi=I-\nu\nu^T$ (resp. $\Pi_h = I - \nu_h\nu_h^T$) be the orthogonal projection onto the tangent planes of $\cM$ (resp. $\cM_h$), and let $\cH : \cM \rightarrow \R^{(d+1)\times (d+1)}$ be the extended Weingarten map of $\cM$ (cf. \cite[Definition 2.5]{Dziuk2013}). Recall in particular that $\cH(x) \nu(x) =0$ for any $x\in \cM$, meaning in particular that $\cH \Pi = \Pi\cH = \cH$. Finally, we introduce the map $\cQ_h : \cM \rightarrow \R^{(d+1)\times (d+1)}$ defined as
	\begin{equation*}
		\cQ_h = \frac{1}{\sigma^\ell} \Pi(I-d_s^\ell\cH)\Pi_h^\ell \cD \Pi_h^\ell (I-d_s^\ell\cH)\Pi,
	\end{equation*}
	where $d_s$ is the oriented distance function restricted to~$\cM_h$ and introduced in \Cref{sec:sfem}. On the one hand, note that for any $u_h, v_h \in S_h$, 
	\begin{align*}
		&(\cD^{-\ell} \nabla_{\cM_h} {u_h}) {\cdot} \nabla_{\cM_h} {\cj{v}_h}\\
		&\qquad = \big(\cD^{-\ell} \Pi_h(I-d_s\cH^{-\ell})\Pi^{-\ell}(\nabla_{\cM} u_h^\ell)^{-\ell}\big) {\cdot} \big( \Pi_h(I-d_s\cH^{-\ell})\Pi^{-\ell}(\nabla_{\cM} \cj{v}_h^\ell)^{-\ell}\big)\\
		&\qquad= \sigma \big(\cQ_h^{-\ell}(\nabla_{\cM} u_h^\ell)^{-\ell}\big)\cdot(\nabla_{\cM} \cj{v}_h^\ell)^{-\ell},
	\end{align*}
	which gives, after integrating both sides over $\cM_h$ and using \Cref{eq:sigma}, 
	\begin{equation}\label{eq:intmh}
		\int_{\cM_h} (\cD^{-\ell} \nabla_{\cM_h} u_h)\cdot (\nabla_{\cM_h} \cj{v}_h) \dd A_h =\int_{\cM} \big(\cQ_h\nabla_{\cM} u_h^\ell\big)\cdot(\nabla_{\cM} \cj{v}_h^\ell) \dd A .
	\end{equation}
	Let then $\mathsf{A}_{\cM_h}^\ell\colon S_h^\ell \times S_h^\ell \rightarrow \R$  be the Hermitian form defined for any $u_h^\ell,v_h^\ell \in S_h^\ell$ by
	\begin{align} 
		\label{eq:biformGh_lifted}
		\begin{split}
			&\mathsf{A}_{\cM_h}^\ell(u_h^\ell,v_h^\ell)
			=\int_{\cM} \big(\cQ_h\nabla_{\cM} u_h^\ell\big)\cdot(\nabla_{\cM} \cj{v}_h^\ell)  \dd A  
			+ \int_{\cM} \big(\sigma^{\ell}\big)^{-1}V  u_h^\ell \cj{v}_h^\ell  \dd A.
			%= \mathsf{A}_{\cM_h}(u_h,v_h).
		\end{split}
	\end{align}
	Note that following \Cref{eq:intmh,eq:sigma}, $\mathsf{A}_{\cM_h}^\ell$ satisfies for any $u_h,v_h \in S_h$ the equality \\ $\mathsf{A}_{\cM_h}^\ell(u_h^\ell,v_h^\ell)=\mathsf{A}_{\cM_h}(u_h,v_h)$.
	Therefore, for any $u_h,v_h \in S_h$, we bound
	\begin{align}\label{eq:eqAmAml_2}
		\begin{split}
			\big|\mathsf{A}_\cM(u_h^\ell,v_h^\ell)&-\mathsf{A}_{\cM_h}(u_h,v_h)\big|=\big| \mathsf{A}_{\cM}(u_h^\ell,v_h^\ell)-  \mathsf{A}_{\cM_h}^\ell (u_h^\ell,v_h^\ell)\big| \\
			& \leq \left|\int_\cM ((\cQ_h -\cD) \nabla_\cM u^\ell_h)\cdot (\nabla_\cM \cj{v}_h^\ell)~\dd A\right| 
			+ \left| \int_\cM (1-\big(\sigma^{\ell}\big)^{-1})V u_h^\ell\cj{v}_h^\ell~\dd A\right|.
		\end{split}
	\end{align}
	
	We now bound these two terms. Recall that \cite[Lemma 4.1]{Dziuk2013} shows
	\begin{equation}\label{eq:ineq_sigma}
		\Vert \sigma \Vert_{L^{\infty}(\cM_{h})} \lesssim 1, \quad
		\Vert \sigma^{-1} \Vert_{L^{\infty}(\cM_{h})} \lesssim 1, \quad
		\Vert \sigma -1 \Vert_{L^{\infty}(\cM_{h})} \lesssim h^2, \quad \Vert \sigma^{-1} -1 \Vert_{L^{\infty}(\cM_{h})} \lesssim h^2.
	\end{equation}
	Hence, since $V$ takes positive values,
	\begin{align*}
		\big| \int_\cM (1-\big(\sigma^{\ell}\big)^{-1})V u_h^\ell\cj{v}_h^\ell~\dd A\big|
		&\le  \int_\cM \vert 1-\big(\sigma^{\ell}\big)^{-1}\vert~V~ \vert u_h^\ell\vert ~\vert {v_h}^\ell\vert \dd A \\
		&\le   \Vert 1-\sigma^{-1}\Vert_{L^{\infty}(\cM_h)}  \int_\cM V~ \vert u_h^\ell\vert ~\vert {v_h}^\ell\vert \dd A,
	\end{align*}
	which in turn gives (using the Cauchy--Schwartz inequality and \Cref{eq:ineq_sigma}), 
	\begin{align}\label{eq:interma}
		\big| \int_\cM (1-\big(\sigma^{\ell}\big)^{-1})V u_h^\ell\cj{v}_h^\ell~\dd A\big|
		&\lesssim h^2 \bigg(\int_\cM V \vert u_h^\ell\vert^2 ~\dd A \bigg)^{1/2} \bigg(\int_\cM V~\vert {v_h}^\ell\vert^2  ~\dd A\bigg)^{1/2}.
	\end{align}
	
	To bound the other term, we first introduce for any $ B \in \R^{(d+1)\times(d+1)}$ the notation $\Vert B\Vert=\sup_{\Vert x\Vert =1} \Vert B x\Vert$.
	Then we have
	\begin{align*}
		\Vert \cQ_h - \cD\Vert
		&=\big\Vert (\sigma^\ell)^{-1} \big(\Pi(I-d_s^\ell\cH)\Pi_h^\ell \cD  \Pi_h^\ell (I-d_s^\ell\cH)\Pi - \cD\big) +\big((\sigma^\ell)^{-1}-1\big)\cD\Vert\\
		&\le \big\Vert (\sigma^\ell)^{-1}\big\Vert_{L^{\infty}(\cM)}~\big\Vert \Pi(I-d_s^\ell\cH)\Pi_h^\ell \cD  \Pi_h^\ell (I-d_s^\ell\cH)\Pi - \cD\big\Vert \\ &\quad\quad+\big\Vert(\sigma^\ell)^{-1}-1\big\Vert_{L^{\infty}(\cM)}~\big\Vert\cD\big\Vert.
	\end{align*}
	By \Cref{eq:ineq_sigma} and since $\cD$ has bounded eigenvalues over $\cM$, we obtain
	\begin{align}\label{eq:interm1}
		\Vert \cQ_h - \cD\Vert
		&\lesssim \big\Vert \Pi(I-d_s^\ell\cH)\Pi_h^\ell \cD  \Pi_h^\ell (I-d_s^\ell\cH)\Pi - \cD\big\Vert + h^2,
	\end{align}
	where the constant in the inequality is independent of the location on~$\cM$.
	We split the first term on the right into 
	\begin{align*}
		\big\Vert \Pi(I-d_s^\ell\cH)\Pi_h^\ell &\cD  \Pi_h^\ell (I-d_s^\ell\cH)\Pi - \cD\big\Vert \\
		&= \big\Vert \Pi \Pi_h^\ell \cD  \Pi_h^\ell \Pi - \cD - \Pi \Pi_h^\ell \cD  \Pi_h^\ell d_s^\ell\cH \Pi
		-d_s^\ell\cH \Pi_h^\ell \cD  \Pi_h^\ell (I-d_s^\ell\cH)\Pi
		\big\Vert \\
		&\le \big\Vert \Pi \Pi_h^\ell \cD  \Pi_h^\ell \Pi - \cD \big\Vert + \big\Vert \Pi \Pi_h^\ell \cD  \Pi_h^\ell d_s^\ell\cH \Pi \big\Vert +
		\big\Vert d_s^\ell\cH \Pi_h^\ell \cD  \Pi_h^\ell (I-d_s^\ell\cH)\Pi
		\big\Vert.
	\end{align*}
	Since $\Vert d_s \Vert_{L^{\infty}(\cM_h)} \lesssim h^2$ by \cite[Lemma 4.1]{Dziuk2013} and $\cH$ is defined independently of~$h$, we conclude that 
	\begin{align}\label{eq:interm2}
		\big\Vert \Pi(I-d_s^\ell\cH)\Pi_h^\ell \cD  \Pi_h^\ell (I-d_s^\ell\cH)\Pi - \cD\big\Vert 
		&\lesssim \big\Vert \Pi\Pi_h^\ell \cD  \Pi_h^\ell \Pi - \cD \big\Vert + h^2.
	\end{align}
	We notice that $\cD = \Pi\cD \Pi$, since by definition of $\cD$, $\cD\nu =0$, which implies 
	\begin{align*}
		\big\Vert \Pi\Pi_h^\ell \cD  \Pi_h^\ell \Pi - \cD \big\Vert
		&= \big\Vert \Pi\Pi_h^\ell \Pi\cD \Pi  \Pi_h^\ell \Pi - \Pi\cD \Pi \big\Vert
		= \big\Vert (\Pi\Pi_h^\ell \Pi-\Pi)\cD \Pi  \Pi_h^\ell \Pi + \Pi\cD(\Pi\Pi_h^\ell \Pi{-}\Pi) \big\Vert \\
		&\le  \big\Vert \Pi\Pi_h^\ell \Pi-\Pi\big\Vert \big\Vert\cD \Pi  \Pi_h^\ell \Pi\big\Vert + \big\Vert \Pi\cD \big\Vert \big\Vert \Pi\Pi_h^\ell \Pi-\Pi \big\Vert.
	\end{align*}
	Using that $\big\Vert \Pi\Pi_h^\ell \Pi-\Pi\big\Vert \lesssim h^2$ by the proof of \cite[Lemma~4.1]{Dziuk2013}, we deduce that $\big\Vert \Pi\Pi_h^\ell \cD  \Pi_h^\ell \Pi - \cD \big\Vert \lesssim h^2$. 
	Injecting this inequality into \Cref{eq:interm2}, and the resulting inequality into \Cref{eq:interm1}, we conclude that
	\begin{equation*}
		\Vert \cQ_h - \cD\Vert \lesssim h^2.
	\end{equation*}
	This allows us to write
	\begin{align*}
		&\left|\int_\cM ((\cQ_h -\cD) \nabla_\cM u^\ell_h)\cdot (\nabla_\cM \cj{v}_h^\ell) \dd A\right| 
		\le  \int_\cM 	\Vert(\cQ_h -\cD) \nabla_\cM u^\ell_h\Vert ~\Vert \nabla_\cM {v_h}^\ell\Vert \dd A \\
		&\qquad \lesssim  \int_\cM 	h^2\Vert \nabla_\cM u^\ell_h\Vert ~\Vert \nabla_\cM {v_h}^\ell\Vert \dd A
		\le h^2\int_\cM 	(\mu_{\min})^{-1} \Vert \nabla_\cM u^\ell_h\Vert_{\cD} ~\Vert \nabla_\cM {v_h}^\ell\Vert_{\cD} \dd A,
	\end{align*}
	where $\mu_{\min} : \cM \rightarrow \R_+$ maps any $x\in\cM$ to the smallest eigenvalue of $\cD(x)$ associated with an eigenvector in $\nu^\perp$. This last inequality is a consequence of the fact that by construction $\nabla_\cM u^\ell_h, \nabla_\cM v^\ell_h \in \nu^\perp$ and using the characterization of eigenvalues through Rayleigh quotients. Since the non-zero eigenvalues of $\cD$ are uniformly bounded above and below by positive constants, we conclude that 
	\begin{align*}
		\big|\int_\cM ((\cQ_h -\cD) \nabla_\cM u^\ell_h)\cdot (\nabla_\cM \cj{v}_h^\ell)~\dd A\big| 
		&\lesssim   h^2\int_\cM  \Vert \nabla_\cM u^\ell_h\Vert_{\cD} ~\Vert \nabla_\cM {v_h}^\ell\Vert_{\cD} \dd A.
	\end{align*}
	Then, using the Cauchy--Schwartz inequality yields
	\begin{align}\label{eq:intermb}
		\begin{split}
			&\left|\int_\cM ((\cQ_h -\cD)\nabla_\cM u^\ell_h)\cdot (\nabla_\cM \cj{v}_h^\ell)~\dd A\right| \\
			&\qquad \lesssim  
			h^2\bigg(\int_\cM 	\Vert \nabla_\cM u^\ell_h\Vert_{\cD}^2 ~\dd A\bigg)^{1/2}
			\bigg(\int_\cM 	\Vert \nabla_\cM v^\ell_h\Vert_{\cD}^2 ~\dd A\bigg)^{1/2}.
		\end{split}
	\end{align}
	Inserting the derived bounds \Cref{eq:interma} and \Cref{eq:intermb} into \Cref{eq:eqAmAml_2}, we derive
	\begin{align*}%\label{eq:eqAmAml}
		\big|\mathsf{A}_\cM(u_h^\ell,v_h^\ell)-\mathsf{A}_{\cM_h}(u_h,v_h)\big| 
		\lesssim~ &h^2\bigg(\int_\cM 	\Vert \nabla_\cM u^\ell_h\Vert_{\cD}^2 ~\dd A\bigg)^{1/2}
		\bigg(\int_\cM 	\Vert \nabla_\cM v^\ell_h\Vert_{\cD}^2 ~\dd A\bigg)^{1/2} \\
		&+ h^2 \bigg(\int_\cM V \vert u_h^\ell\vert^2 ~\dd A \bigg)^{1/2} \bigg(\int_\cM V~\vert {v_h}^\ell\vert^2  ~\dd A\bigg)^{1/2}.
	\end{align*}
	Note that for any $u \in H^1(\cM)$, 
	\begin{equation*}
		\mathsf{A}_{\cM}(u,u) \geq \int_\cM V \vert u\vert^2 \dd A , \quad \text{and}\quad \mathsf{A}_{\cM}(u,u) \ge \int_\cM  \cD\nabla_{\cM} u \cdot \nabla_{\cM}\cj{ u} \dd A =\int_\cM  \|\nabla_\cM u \|_{\cD}^2 \dd A, 
	\end{equation*}
	so we obtain
	\begin{align*}
		\left| \mathsf{A}_{\cM}(u_h^\ell,v_h^\ell)-  \mathsf{A}_{\cM_h}^\ell (u_h^\ell,v_h^\ell)\right| \lesssim h^2\sqrt{\mathsf{A}_{\cM}(u_h^\ell,u_h^\ell)}\sqrt{\mathsf{A}_{\cM}(v_h^\ell,v_h^\ell)}.
	\end{align*}
	Finally, due to  \Cref{eq:cont},
	\begin{align*}
		\sqrt{\mathsf{A}_{\cM}(u_h^\ell,u_h^\ell)}\sqrt{\mathsf{A}_{\cM}(v_h^\ell,v_h^\ell)} \lesssim \|u_h^\ell\|_{H^1(\cM)}\|v_h^\ell\|_{H^1(\cM)},
	\end{align*}
	and the result follows. 
\end{proof}

\subsection{Norm estimates}

We start by introducing a few norm estimates involving the resolvents of $\cL$ and $\cL_h$. These results are straightforward extensions of the results introduced in \cite[Lemmma 6.3]{bonito2016}, but adapted to the contour $\Gamma$ used to define functions of operators in this paper. We also recall a finite element error estimate which derives from a result in   \cite[Lemma 6.1]{bonito2016}.

\begin{lemma}\label{lem:sol}
	For any $z\in\Gamma$, $s\in [0,1]$,  $v\in L^2(\cM)$, and $v_h\in S_h^\ell$, it holds
	\begin{align}
		\|\cL^{s}(z-\cL)^{-1}v\|_{L^{2}(\cM)} &\lesssim |z|^{-(1-s)} \| v\|_{L^2(\cM)},\label{eq:Ls} \\
		\|\cL_h^{s}(z-\cL_h)^{-1}v_h\|_{L^{2}(\cM)} &\lesssim |z|^{-(1-s)} \| v_h\|_{L^2(\cM)}.\label{eq:Lhs}
	\end{align}
	Besides, for any $\beta\in[0,1]$ and for any $\varphi\in L^{2}(\cM)$, 
	\begin{equation}\label{eq:err_fem}
		\begin{aligned}
			\Vert \cL_h^{(1-\beta)/2}P_h(\cL^{-1}-\cL_h^{-1}P_h)\varphi\Vert_{L^2(\cM)}
			\lesssim h^{2\beta} \Vert \cL^{-(1-\beta)/2}\varphi\Vert_{L^2(\cM)}
		\end{aligned}
	\end{equation}
\end{lemma}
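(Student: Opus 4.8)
The plan is to derive the two resolvent bounds \eqref{eq:Ls}--\eqref{eq:Lhs} directly from the spectral decompositions of $\cL$ and $\cL_h$, and to obtain the finite element estimate \eqref{eq:err_fem} from the conforming Galerkin structure of $\cL_h$ on $S_h^\ell$ together with an interpolation between endpoint error bounds. For \eqref{eq:Ls}, I would expand $v$ in the orthonormal eigenbasis $\{e_i\}_{i\in\N}$ of $\cL$ furnished by \Cref{prop:spectral}, giving $\cL^s(z-\cL)^{-1}v=\sum_{i}\lambda_i^s(z-\lambda_i)^{-1}(v,e_i)_{L^2(\cM)}e_i$ and hence, by Parseval, $\|\cL^s(z-\cL)^{-1}v\|_{L^2(\cM)}^2=\sum_i \lambda_i^{2s}|z-\lambda_i|^{-2}|(v,e_i)_{L^2(\cM)}|^2$. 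It then suffices to prove the pointwise bound $\lambda^s|z-\lambda|^{-1}\lesssim |z|^{-(1-s)}$ uniformly over $z\in\Gamma$ and $\lambda\ge\delta$. The estimate \eqref{eq:Lhs} follows verbatim after replacing $\{(\lambda_i,e_i)\}$ by the finite family $\{(\lambda_i^h,e_i^h)\}_{1\le i\le N_h}$ of eigenpairs of $\cL_h$, all of whose eigenvalues satisfy $\lambda_i^h\ge\lambda_1\ge\delta$ by \Cref{lem:ev_bound}.

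The key geometric ingredient is that, along $\Gamma$, one has $|z-\lambda|\ge c_\theta(|z|+\lambda)$ for a constant $c_\theta>0$ and all $\lambda\ge\delta$. On the rays $\Gamma_+$ and $\Gamma_-$, where $z=te^{\pm i\theta}$, this follows from $|te^{\pm i\theta}-\lambda|^2=t^2-2t\lambda\cos\theta+\lambda^2$ together with $t^2+\lambda^2\ge 2t\lambda$ and the choice $c_\theta=\sin(\theta/2)$; on the arc $\Gamma_0$, where $|z|=\delta_0<\delta\le\lambda$, it follows from $|z-\lambda|\ge\lambda-\delta_0$ and $\delta_0<\delta$. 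Combining this with the weighted arithmetic--geometric inequality $\lambda^s|z|^{1-s}\le s\lambda+(1-s)|z|\le|z|+\lambda$ yields $\lambda^s|z|^{1-s}|z-\lambda|^{-1}\le c_\theta^{-1}$, which is exactly the required pointwise bound and proves \eqref{eq:Ls}--\eqref{eq:Lhs}.

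For \eqref{eq:err_fem}, I would first record that $\cL_h$ is a genuine conforming Galerkin discretization of $\cL$ relative to the \emph{exact} bilinear form $\mathsf{A}_\cM$: setting $w=\cL^{-1}\varphi$ and $w_h=\cL_h^{-1}P_h\varphi\in S_h^\ell$, the defining relation of $\cL_h$ shows $\mathsf{A}_\cM(w_h,v_h)=(\varphi,v_h)_{L^2(\cM)}$ for all $v_h\in S_h^\ell$, so $w_h$ is the $\mathsf{A}_\cM$-Ritz projection of $w$ onto $S_h^\ell$ and $P_h(\cL^{-1}-\cL_h^{-1}P_h)\varphi=P_hw-w_h$. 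I would then establish the two endpoints. For the energy estimate ($\beta=0$), \eqref{eq:norm_equiv} identifies $\|\cL_h^{1/2}(P_hw-w_h)\|_{L^2(\cM)}$ with $\|P_hw-w_h\|_{H^1(\cM)}$, which is controlled by Céa's lemma, the $H^1$-stability of $P_h$ (valid under the quasi-uniformity assumption), and the approximation property of $S_h^\ell$ via the Bramble--Hilbert estimate \Cref{lem:bh}. For the $L^2$ estimate ($\beta=1$), an Aubin--Nitsche duality argument combined with elliptic regularity $\|w\|_{H^2(\cM)}\lesssim\|\varphi\|_{L^2(\cM)}$ gives $\|P_hw-w_h\|_{L^2(\cM)}\lesssim h^2\|\varphi\|_{L^2(\cM)}$. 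Finally, rewriting the claim as the assertion that $B_\beta=\cL_h^{(1-\beta)/2}P_h(\cL^{-1}-\cL_h^{-1}P_h)\cL^{(1-\beta)/2}$ has $L^2(\cM)\to L^2(\cM)$ norm $\lesssim h^{2\beta}$, I would interpolate between the verified endpoints $\beta=0$ and $\beta=1$.

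The main obstacle I anticipate is the interpolation step for general $\beta\in[0,1]$: one must treat $B_\beta$ as an analytic family in which fractional powers of the two \emph{distinct} operators $\cL_h$ and $\cL$ appear simultaneously, and control them consistently through the norm equivalences \eqref{eq:norm_equiv} so that the intermediate spaces match on both sides and the resulting constants remain independent of $h$. This is precisely where the argument specializes \cite[Lemma 6.1]{bonito2016} to the present lifted surface setting, and the bulk of the care goes into making the endpoint-to-fractional passage rigorous.
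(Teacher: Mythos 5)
Your treatment of \eqref{eq:Ls}--\eqref{eq:Lhs} is correct but takes a genuinely different route from the paper's. The paper never expands in the eigenbasis: it sets $w=(z-\cL)^{-1}v$, invokes the moment inequality $\|\cL^{s}w\|_{L^2(\cM)}\le\|\cL w\|_{L^2(\cM)}^{s}\|w\|_{L^2(\cM)}^{1-s}$ coming from complex interpolation between $L^2(\cM)$ and the domain of $\cL$, and then bounds $\|w\|_{L^2(\cM)}\lesssim|z|^{-1}\|v\|_{L^2(\cM)}$ and $\|\cL w\|_{L^2(\cM)}=\|zw-v\|_{L^2(\cM)}\lesssim\|v\|_{L^2(\cM)}$ via the sectorial resolvent estimate \eqref{eq:A4}; the same argument is repeated for $\cL_h$. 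Your spectral proof, resting on the pointwise bound $\lambda^{s}|z-\lambda|^{-1}\le c_\theta^{-1}|z|^{-(1-s)}$ obtained from $|z-\lambda|\ge c_\theta(|z|+\lambda)$ on $\Gamma$ together with weighted AM--GM, is complete and correct (the geometric inequality checks out on the rays with $c_\theta=\sin(\theta/2)$ and on the arc because $\delta_0<\delta/2\le\lambda/2$), and it is more self-contained in that it reproves the $s=0$ resolvent bound on $\Gamma$ rather than quoting it. The trade-off is that it uses self-adjointness and discreteness of the spectrum, which the paper's interpolation argument does not need.

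For \eqref{eq:err_fem} your plan (identify $\cL_h^{-1}P_h\varphi$ as the $\mathsf{A}_\cM$-Ritz projection of $\cL^{-1}\varphi$, prove the $\beta=0$ endpoint by C\'ea plus $H^1$-stability of $P_h$ and the $\beta=1$ endpoint by Aubin--Nitsche, then interpolate in $\beta$) is the standard strategy underlying the cited literature, and both endpoints are correct as you state them. However, the step you yourself single out as the main obstacle, namely passing from the endpoints to fractional $\beta$ for $B_\beta=\cL_h^{(1-\beta)/2}P_h(\cL^{-1}-\cL_h^{-1}P_h)\cL^{(1-\beta)/2}$, is precisely the part you do not carry out, and it is not a routine interpolation because the left and right factors involve fractional powers of the two distinct operators $\cL_h$ and $\cL$, whose scales of intermediate spaces must be identified with constants uniform in $h$. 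The paper resolves exactly this difficulty by a different reduction: it first applies \cite[Lemma 5.2]{bonito2016} and \cite[Lemma 5.1]{bonito2016} to replace $\cL_h^{(1-\beta)/2}P_h$ by $\cL^{(1-\beta)/2}$ (discrete/continuous fractional norm equivalence on $S_h^\ell$ and fractional stability of $P_h$), so that only powers of the single operator $\cL$ remain, and only then invokes the fractional finite element error estimate \cite[Lemma 6.1]{bonito2016} with $\alpha=\beta$ and $s=(1-\beta)/2$. To complete your route you would either have to prove those two norm-equivalence facts yourself, after which the interpolation reduces to a single-operator statement and goes through, or cite them as the paper does.
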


\begin{proof}
	
	We start with the proof of \Cref{eq:Ls}. 
	To this end, let $z \in \Gamma$ and $s \in [0,1]$. Let then $v\in L^2(\cM)$ and $w=(z-\cL)^{-1}v\in\cD(\cL)$, where $\cD(\cdot)$ denotes the domain of an operator. Since $\cL$ is self-adjoint, we have $\cD(\cL^s)=[L^2(\cM),\cD(\cL)]_{s}$ with isometry, where $[L^2(\cM),\cD(\cL)]_{s}$  the intermediate space between $L^2(\cM)$ and $\cD(\cL)$ obtained by the complex interpolation method \cite[Theorem 16.1]{Yagi2010}. In particular, we have 
	\begin{equation}\label{eq:Ls_temp}
		\Vert \cL^{s} w\Vert_{L^2(\cM)} 
		= \Vert w\Vert_{[L^2(\cM),\cD(\cL)]_{s}}
		\le \Vert \cL w\Vert_{L^2(\cM)}^s \Vert w\Vert_{L^2(\cM)}^{1-s}
	\end{equation}
	where the inequality derives from a classical result on interpolation spaces (see e.g., \cite[Section 5.1]{Yagi2010}).
	On the one hand, we obtain by \Cref{eq:A4} that 
	\begin{align*}
		\|w\|_{L^2(\cM)}=\|(z-\cL)^{-1}v\|_{L^2(\cM)} \lesssim |z|^{-1} \|v\|_{L^2(\cM)}. 
	\end{align*}
	%	=\|\cL(z-\cL)^{-1}v-z(z-\cL)^{-1}v+z(z-\cL)^{-1}v\|_{L^2(\cM)} $ }
On the other hand, adding and subtracting $z(z-\cL)^{-1}v$ yields
\begin{align*}
	\|\cL w\|_{L^2(\cM)} 
	& %=\|\cL(z-\cL)^{-1}v-z(z-\cL)^{-1}v+z(z-\cL)^{-1}v\|_{L^2(\cM)} 	
	=\|z(z{-}\cL)^{-1}v- v\|_{L^2(\cM)} 
	\leq |z|\|(z{-}\cL)^{-1}v\|_{L^2(\cM)}+ \| v\|_{L^2(\cM)} 
	\lesssim 2\|v\|_{L^2(\cM)},
\end{align*}
where we once again use \Cref{eq:A4} to derive the last inequality.
Combining these two estimates with \Cref{eq:Ls_temp}, we get 
\begin{align*}
	\|\cL^{s} (z{-}\cL)^{-1}v\|_{L^2(\cM)}{=}\|\cL^{s} w\|_{L^2(\cM)}
	\lesssim \|v\|^s_{L^2(\cM)} |z|^{-(1-s)}\|v\|_{L^2(\cM)}^{1-s } 
	= |z|^{-(1-s)} \|v\|_{L^2(\cM)},
\end{align*}
hence proving \Cref{eq:Ls}. As for \Cref{eq:Lhs}, it is proven using the same steps as \Cref{eq:Ls} but substituting $\cL$ by $\cL_h$, \Cref{eq:A4} by \Cref{eq:A5}, and $\cD(\cL)$ by $S_h^\ell$.  

Finally, let us prove \Cref{eq:err_fem}. Let $\beta\in(0,1]$ and $\varphi\in L^{2}(\cM)$. Then, we have
\begin{align*}
	\Vert \cL_h^{(1-\beta)/2}P_h(\cL^{-1}-\cL_h^{-1}P_h)\varphi\Vert_{L^2(\cM)}
	&\lesssim \Vert \cL^{(1-\beta)/2}P_h(\cL^{-1}-\cL_h^{-1}P_h)\varphi\Vert_{L^2(\cM)}\\
	&\lesssim \Vert \cL^{(1-\beta)/2}(\cL^{-1}-\cL_h^{-1}P_h)\varphi\Vert_{L^2(\cM)}
\end{align*}
where we use \cite[Lemma 5.2]{bonito2016} to derive the first inequality, and \cite[Lemma 5.1]{bonito2016} to derive the second inequality. Finally, noting that $\cL$ satisfies elliptic regularity for indices $\alpha\in (0,1]$ (cf. \cite[Assumption 1]{bonito2016}),  we can apply  \cite[Lemma 6.1]{bonito2016} with $\alpha=\beta$ and $s=(1-\beta)/2$ to conclude that $\Vert\cL^{(1-\beta)/2}(\cL^{-1}-\cL_h^{-1}P_h)\varphi\Vert_{L^2(\cM)}\lesssim h^{2\beta}\Vert\cL^{-(1-\beta)/2}\varphi\Vert_{L^2(\cM)}$ and therefore 
\begin{align*}
	\Vert \cL_h^{(1-\beta)/2}P_h(\cL^{-1}-\cL_h^{-1}P_h)\varphi\Vert_{L^2(\cM)}
	&\lesssim h^{2\beta}\Vert\cL^{-(1-\beta)/2}\varphi\Vert_{L^2(\cM)}.
\end{align*}
This concludes the proof of \Cref{lem:sol}.
\end{proof}

We now prove some estimates for the norm of shifted inverses of the operators $\cL_h$ and~$\sL_h$, and for the error between inverses of these two operators. These results can be seen as extensions of the ones stated in \cite[Lemma A.1]{Bonito2024}.

\begin{lemma}
\label{lem:lemmaa1}
Let $v\in L^2(\cM)$, $v_h \in S_h^\ell$ and $V_h \in S_h$ be arbitrary. Then, for all $z \in \Gamma$, for any  $q \in [-1,1]$, and any $r \in (0,2)$, and any $s\in[0,1]$, 
\begin{align}
	%		&\|\cL_h^{1/2}(z-\cL_h)^{-1}v_h\|_{L^{2}(\cM)} \lesssim |z|^{-(1+q)/2} \|\cL_h^{q/2} v_h\|_{L^2(\cM)}, \label{eq:A1}\\
	%		&\|\cL^{s}(z-\cL)^{-1}v\|_{L^{2}(\cM)} \lesssim |z|^{-(1-s)} \| v\|_{L^2(\cM)}, \label{eq:A1b}\\
	\|\sL_h(z-\sL_h)^{-1}V_h\|_{L^2(\cM_h)} &\lesssim |z|^{-1/2} \| V_h\|_{H^1(\cM_h)}, \label{eq:A2}\\
	\|(z-\cL_h)^{-1}v_h\|_{L^2(\cM)} & \lesssim |z|^{r/2-1} \|\cL_h^{-r/2} v_h\|_{L^2(\cM)}, \label{eq:A3} \\
	\left\|(\cL_h^{-1} v_h)^{-\ell}-\sL_h^{-1}\sP_h(\sigma v_h^{-\ell})\right\|_{H^1(\cM_h)} & \lesssim h^2 \|\cL_h^{-1/2} v_h\|_{L^2(\cM)}. 	\label{eq:B}
\end{align}
\end{lemma}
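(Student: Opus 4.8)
The plan is to handle the three inequalities separately: \eqref{eq:A2} and \eqref{eq:A3} are purely spectral statements about the self-adjoint discrete operators, whereas \eqref{eq:B} is a Galerkin comparison between $\cL_h$ and $\sL_h$ and is the real content of the lemma. For the first two I would diagonalize. Since $\sL_h$ is self-adjoint on $S_h$ with $L^2(\cM_h)$-orthonormal eigenbasis $\{E_i^h\}$ and positive eigenvalues $\{\Lambda_i^h\}$, expanding $V_h=\sum_i c_i E_i^h$ and using the norm equivalence $\|V_h\|_{H^1(\cM_h)}\sim\|\sL_h^{1/2}V_h\|_{L^2(\cM_h)}$ from \eqref{eq:norm_equiv} reduces \eqref{eq:A2} to the scalar bound $\Lambda_i^h|z|\lesssim|z-\Lambda_i^h|^2$ for every $z\in\Gamma$. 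Similarly, expanding $v_h=\sum_i c_i e_i^h$ in the $L^2(\cM)$-orthonormal eigenbasis $\{e_i^h\}$ of $\cL_h$ reduces \eqref{eq:A3} to $(\lambda_i^h)^r|z|^{2-r}\lesssim|z-\lambda_i^h|^2$.

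Both scalar bounds rest on a single sectorial estimate: for all $z\in\Gamma$ and all eigenvalues $\lambda\ge\lambda_1\ge\delta$ one has $|z-\lambda|^2\gtrsim|z|^2+\lambda^2$. On the rays $\Gamma_\pm$ this is $|te^{\pm i\theta}-\lambda|^2=t^2-2t\lambda\cos\theta+\lambda^2\ge(1-\cos\theta)(t^2+\lambda^2)$ with $\theta\in(0,\pi/2)$, and on the arc $\Gamma_0$ it follows from $|z|=\delta_0<\delta/2\le\lambda/2$. Combined with $|z|^2+\lambda^2\gtrsim|z|\lambda$ this yields \eqref{eq:A2}, while combined with the weighted AM--GM inequality $\lambda^r|z|^{2-r}\le\lambda^2+|z|^2$ (valid for $r\in(0,2)$, since the exponents $r/2$ and $(2-r)/2$ sum to $1$) it yields \eqref{eq:A3}. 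A brief separate check on $\Gamma_0$ confirms uniformity there.

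The estimate \eqref{eq:B} I would prove by a C\'ea-type argument. Set $w=\cL_h^{-1}v_h\in S_h^\ell$ and $W_h=\sL_h^{-1}\sP_h(\sigma v_h^{-\ell})\in S_h$, and let $e_h=w^{-\ell}-W_h\in S_h$. The key observation is that both discrete problems carry the same load when tested against $\phi\in S_h$: using the change-of-measure identity \eqref{eq:sigma} and the weak definitions of the two operators, $\mathsf{A}_{\cM_h}(W_h,\phi)=(\sigma v_h^{-\ell},\phi)_{L^2(\cM_h)}=(v_h,\phi^\ell)_{L^2(\cM)}=\mathsf{A}_\cM(w,\phi^\ell)$, the projection $\sP_h$ dropping out because $\phi\in S_h$. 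Hence $\mathsf{A}_{\cM_h}(e_h,\phi)=\mathsf{A}_{\cM_h}(w^{-\ell},\phi)-\mathsf{A}_\cM(w,\phi^\ell)$ is exactly the geometric consistency defect controlled by \Cref{lem:geometric-consistency-A}, i.e. bounded by $Ch^2\|w\|_{H^1(\cM)}\|\phi^\ell\|_{H^1(\cM)}$. Choosing $\phi=e_h$, invoking coercivity of $\mathsf{A}_{\cM_h}$ and the standard equivalence of lifted norms $\|e_h^\ell\|_{H^1(\cM)}\sim\|e_h\|_{H^1(\cM_h)}$, I would obtain $\|e_h\|_{H^1(\cM_h)}\lesssim h^2\|w\|_{H^1(\cM)}$, and conclude with $\|w\|_{H^1(\cM)}\sim\|\cL_h^{1/2}w\|_{L^2(\cM)}=\|\cL_h^{-1/2}v_h\|_{L^2(\cM)}$ from \eqref{eq:norm_equiv}.

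The main obstacle is the bookkeeping in \eqref{eq:B}: one must track the lift $^{\pm\ell}$, the area element $\sigma$, and the two projections $P_h,\sP_h$ precisely enough to verify the load identity $\mathsf{A}_{\cM_h}(W_h,\phi)=\mathsf{A}_\cM(w,\phi^\ell)$ and thereby reduce the entire error to the geometric consistency bound \eqref{eq:perterror}. Once that reduction is in place, the remaining steps are routine, and \eqref{eq:A2}--\eqref{eq:A3} follow from elementary scalar inequalities for the eigenvalues.
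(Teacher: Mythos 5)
Your proof of \eqref{eq:B} is essentially the paper's own argument: the same load identity $\mathsf{A}_{\cM_h}(W_h,\phi)=(v_h,\phi^\ell)_{L^2(\cM)}=\mathsf{A}_\cM(w,\phi^\ell)$, reduction to the geometric consistency defect of \Cref{lem:geometric-consistency-A}, testing with the error itself, coercivity, and the norm equivalences \eqref{eq:norm_equiv}; nothing to add there. For \eqref{eq:A2} and \eqref{eq:A3} you take a genuinely different route. The paper derives both from the abstract sectorial resolvent bounds: it first proves $\|\cL_h^{s}(z-\cL_h)^{-1}v_h\|_{L^2(\cM)}\lesssim |z|^{-(1-s)}\|v_h\|_{L^2(\cM)}$ (and its analogue for $\sL_h$) by complex interpolation between the cases $s=0$ and $s=1$, using only the resolvent estimates \eqref{eq:A5} and \eqref{eq:A5bis}, and then obtains \eqref{eq:A2} with $s=1/2$ and \eqref{eq:A3} with $s=r/2$ by commuting powers of the operator through the resolvent. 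You instead diagonalize the self-adjoint operators and reduce everything to the scalar inequality $|z-\lambda|^2\gtrsim |z|^2+\lambda^2$ on $\Gamma$, followed by $2|z|\lambda\le|z|^2+\lambda^2$ for \eqref{eq:A2} and weighted AM--GM for \eqref{eq:A3}. Your scalar estimates are correct on all three pieces of the contour, and the argument is more elementary and self-contained; the paper's operator-theoretic route is the one that would survive if $\cL$ were merely sectorial rather than self-adjoint, and it reuses machinery already needed elsewhere (Lemma \ref{lem:sol}). The only point you should make explicit is that the spectrum of $\sL_h$ stays uniformly above $2\delta_0$: for $\cL_h$ this is immediate from $\lambda_i^h\ge\lambda_i\ge\delta>2\delta_0$, but for $\sL_h$ you only get $\Lambda_i^h\ge(1-Ch^2)\lambda_i^h$ from \Cref{lem:ev_bound}, so the bound on $\Gamma_0$ requires $h_0$ small enough that $(1-Ch_0^2)\delta>2\delta_0$ — a harmless but necessary remark, of the same kind the paper itself makes when it assumes $C_\lambda h_0<1$.
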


\begin{proof}

To prove \Cref{eq:A2}, we first observe that the estimate in \Cref{eq:Lhs} carry over to the case when $\sL_h$ is used instead of $\cL_h$, and $L^2(\cM)$ (resp. $S_h^\ell$) is replaced by its counterpart $L^2(\cM_{h})$ (resp. $S_h$) on the polyhedral surface. Hence, for any $z\in\Gamma$, $V_h\in L^2(\cM_h)$, we have
\begin{equation*}
	\|\sL_h^{s}(z-\sL_h)^{-1}V_h\|_{L^{2}(\cM_h)} \lesssim |z|^{-(1-s)} \| V_h\|_{L^2(\cM_h)}
\end{equation*}
In particular, we retrieve (by taking $s=1/2$)
\begin{equation}
	\begin{split} 
		\|\sL_h(z-\sL_h)^{-1}V_h\|_{L^2(\cM_h)}
		& =\| \sL_h^{1/2}(z-\sL_h)^{-1}\sL_h^{1/2}V_h\|_{L^2(\cM_h)}\\
		& \lesssim |z|^{-1/2} \|\sL_h^{1/2}V_h\|_{L^2(\cM_h)}\lesssim  |z|^{-1/2} \|V_h\|_{H^1(\cM_h)}\label{eq:E'3},
	\end{split}
\end{equation}
where we used the equivalence of norms \eqref{eq:norm_equiv} in the last inequality.

To bound \Cref{eq:A3}, we apply \Cref{eq:Lhs} with $s = r/2$ to obtain 
\begin{align*}
	\|(z-\cL_h)^{-1}v_h\|_{L^2(\cM)} = 	\|\cL_h^{r/2}(z-\cL_h)^{-1}\cL_h^{-r/2}v_h\|_{L^2(\cM)} \lesssim |z|^{-(1-r/2)}\|\cL_h^{-r/2}v_h\|_{L^2(\cM)}. 
\end{align*}

Finally, to prove the bound in \Cref{eq:B}, we rely on the geometric consistency estimate of \Cref{lem:geometric-consistency-A}. 
Let $u_h = \cL_h^{-1} v_h$ and let $U_h = \sL_h^{-1}\sP_h(\sigma v_h^{-\ell})$. 
Note that by definition of $\cL_h^{-1}$,
\begin{align*}
	(\cL_h u_h,w_h)_{L^2(\cM)} = \mathsf{A}_\cM(u_h,w_h) = 	(v_h,w_h)_{L^2(\cM)},
\end{align*}
for all  $w_h \in S_h^\ell$. 
Likewise, for $\sL_h^{-1}$ we obtain 
\begin{align*}
	(\sL_h U_h,W_h)_{L^2(\cM_h)} 
	&= \mathsf{A}_{\cM_h}(U_h,W_h)
	=	(\sP_h(\sigma v_h^{-\ell}),W_h)_{L^2(\cM_h)}\\
	&= 	(\sigma v_h^{-\ell},W_h)_{L^2(\cM_h)} = (v_h,W_h^{\ell})_{L^2(\cM)}, 
\end{align*}
for all $W_h \in S_h$, where we used the definition of $\sigma$ in the last step.
Let us now select a fixed, but arbitrary, $\Xi_h \in S_h$.
Then, by combining the last two equations,
\begin{align*}
	|\mathsf{A}_{\cM_h}(u_h^{-\ell}-U_h,\Xi_h)| &=|\mathsf{A}_{\cM_h}(u_h^{-\ell},\Xi_h)-\mathsf{A}_{\cM_h}(U_h,\Xi_h)|\\
	&=|\mathsf{A}_{\cM_h}(u_h^{-\ell},\Xi_h)-(v_h,\Xi_h^{\ell})_{L^2(\cM)}|
	=|\mathsf{A}_{\cM_h}(u_h^{-\ell},\Xi_h)-\mathsf{A}_\cM(u_h,\Xi_h^{\ell})|,
\end{align*}
meaning that an application of \Cref{lem:geometric-consistency-A} results in the bound 
\begin{align}
	|\mathsf{A}_{\cM_h}(u_h^{-\ell}-U_h,\Xi_h)|  \lesssim h^2 \|u_h\|_{H^1(\cM)} \|\Xi_h^\ell\|_{H^1(\cM)}. \label{eq:C1} 
\end{align}

Further, note that for any $\Xi_h^\ell \in S_h^\ell$, the equivalence of norms~\eqref{eq:norm_equiv} gives
\begin{align}\label{eq:equiv_m}
	\|\Xi_h^\ell\|_{H^1(\cM)}^2 \sim \|\cL_h^{1/2} \Xi_h^\ell\|_{L^2(\cM)}^2 
	= ({\cL_h} \Xi_h^\ell,\Xi_h^\ell)_{L^2(\cM)}
	=\mathsf{A}_{\cM}(\Xi_h^\ell,\Xi_h^\ell),
\end{align}
where the last equality comes from the definition of $\cL_h$.  And similarly, for any $\Xi_h \in S_h$, we have
\begin{align}\label{eq:equiv_mh}
	\|\Xi_h\|_{H^1(\cM_h)}^2 \sim
	\mathsf{A}_{\cM_h}(\Xi_h,\Xi_h).
\end{align}
Then, applying the triangle inequality to the (last) right-hand side of \Cref{eq:equiv_m} gives
\begin{align*}
	\|\Xi_h^\ell\|_{H^1(\cM)}^2 &\lesssim \mathsf{A}_{\cM_h}(\Xi_h,\Xi_h)
	+
	\big\vert  \mathsf{A}_{\cM}(\Xi_h^\ell,\Xi_h^\ell)-\mathsf{A}_{\cM_h}(\Xi_h,\Xi_h)\big\vert \\
	&\lesssim \|\Xi_h\|_{H^1(\cM_h)}^2 + h^2 \|\Xi_h^\ell\|_{H^1(\cM)}^2,
\end{align*}
where we used \Cref{eq:equiv_mh} and \Cref{lem:geometric-consistency-A} to derive the second inequality.
This means in particular that there exists $C>0$ independent of $h$ such that $ \|\Xi_h^\ell\|_{H^1(\cM)}^2 \le C(\|\Xi_h\|_{H^1(\cM_h)}^2 + h^2 \|\Xi_h^\ell\|_{H^1(\cM)}^2)$. Recall that $h \in (0, h_0)$ for some $h_0 \in (0,1)$ small enough. Assuming that especially $1-Ch_0^2 >0$ yields $ \|\Xi_h^\ell\|_{H^1(\cM)}^2 \le C(1-Ch^2)^{-1}\|\Xi_h\|_{H^1(\cM_h)}^2\le C(1-Ch_0^2)^{-1}\|\Xi_h\|_{H^1(\cM_h)}^2$, which allows us to conclude that
\begin{align}\label{eq:equiv_m_mh}
	\|\Xi_h^\ell\|_{H^1(\cM)}^2 &
	\lesssim \|\Xi_h\|_{H^1(\cM_h)}^2.
\end{align}

Now, applying successively \Cref{eq:equiv_mh} and \Cref{eq:C1} with $\Xi_h=u_h^{-\ell}-U_h$, we obtain
\begin{align*}
	\|u_h^{-\ell}-U_h\|_{H^1(\cM_h)}^2 \lesssim h^2  \|u_h\|_{H^1(\cM)} \|u_h-U_h^\ell\|_{H^1(\cM)}
	\lesssim h^2  \|u_h\|_{H^1(\cM)} \|u_h^{-\ell}-U_h\|_{H^1(\cM_h)},
\end{align*}
where the last inequality is derived from applying \Cref{eq:equiv_m_mh}.
Therefore, we end up with
\begin{align*}
	\|u_h-U_h^\ell\|_{H^1(\cM)} \lesssim h^2 \|u_h\|_{H^1(\cM)}\lesssim h^2 \|\cL_h^{-1/2}v_h\|_{L^2(\cM)},
\end{align*}
where the equivalence of norms~\eqref{eq:norm_equiv} together with the definition of~$u_h$ are used in the final step. This concludes the proof of \Cref{eq:B}.
\end{proof}

Finally, we recall the Bramble-–Hilbert lemma (cf. \cite[Equation (4.8)]{Bonito2024}), which is used in several proofs in this paper.

\begin{lemma}[Bramble-–Hilbert lemma]\label{lem:bh}
For any $t\in [0,2]$ and any $\varphi \in H^t(\cM)$,
\begin{equation}\label{eq:bh}
	\Vert (I - P_h )\varphi\Vert_{L^2(\cM)} \lesssim h^t\Vert \varphi\Vert_{H^t(\cM)}
	\lesssim  h^t\Vert \cL^{t/2} \varphi\Vert_{L^2(\cM)},
\end{equation}
where we used the equivalence of Sobolev norms dot-spaces norms in the last inequality.
\end{lemma}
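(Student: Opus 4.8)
The plan is to prove the two inequalities separately. The first is the classical $L^2$-approximation estimate for the orthogonal projection $P_h$ onto the lifted finite element space $S_h^\ell$, and I would obtain it by establishing the two endpoints $t=0$ and $t=2$ and interpolating. The second inequality merely compares the norm induced by $\cL$ with the Bessel-potential norm defining $H^t(\cM)$, and follows from the norm equivalences already available in the paper.

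For the first inequality, I would start from the fact that $P_h$ is the $L^2(\cM)$-orthogonal projection onto $S_h^\ell$, hence a contraction and quasi-optimal, so that
\begin{align*}
	\Vert (I-P_h)\varphi\Vert_{L^2(\cM)} \le \inf_{\chi\in S_h^\ell}\Vert \varphi-\chi\Vert_{L^2(\cM)}.
\end{align*}
At $t=0$ this already yields $\Vert(I-P_h)\varphi\Vert_{L^2(\cM)}\le \Vert\varphi\Vert_{L^2(\cM)}$. At $t=2$, since $d\le 2$ the embedding $H^2(\cM)\hookrightarrow C^0(\cM)$ makes the lifted nodal interpolant $I_h\varphi\in S_h^\ell$ well defined, and I would invoke the standard surface finite element interpolation estimate, transported to $\cM$ through the lift, to obtain $\Vert \varphi-I_h\varphi\Vert_{L^2(\cM)}\lesssim h^2\Vert\varphi\Vert_{H^2(\cM)}$ and hence $\Vert(I-P_h)\varphi\Vert_{L^2(\cM)}\lesssim h^2\Vert\varphi\Vert_{H^2(\cM)}$. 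Thus $I-P_h$ is bounded $L^2(\cM)\to L^2(\cM)$ with norm $\lesssim 1$ and $H^2(\cM)\to L^2(\cM)$ with norm $\lesssim h^2$; since $[L^2(\cM),H^2(\cM)]_{t/2}=H^t(\cM)$ with equivalent norms, complex interpolation delivers $\Vert(I-P_h)\varphi\Vert_{L^2(\cM)}\lesssim h^t\Vert\varphi\Vert_{H^t(\cM)}$ for every $t\in[0,2]$.

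For the second inequality I would again interpolate between endpoints to establish $\Vert\varphi\Vert_{H^t(\cM)}\lesssim\Vert\cL^{t/2}\varphi\Vert_{L^2(\cM)}$. At $t=0$ both sides coincide, and at $t=1$ the bound is exactly the first equivalence recorded in \eqref{eq:norm_equiv}. Since $\cL$ is a positive self-adjoint second-order elliptic operator sharing the domain scale of $I-\Delta_\cM$ (the eigenvalues of $\cD$ in $\nu^\perp$ and the potential $V$ being uniformly bounded above and below), elliptic regularity shows $\cL\colon H^2(\cM)\to L^2(\cM)$ is an isomorphism comparable to $I-\Delta_\cM$, which supplies the $t=2$ comparison; interpolating the families $\cL^{t/2}$ and $(I-\Delta_\cM)^{t/2}$ over $[0,2]$ then yields the claim.

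The hard part will be the surface interpolation estimate at $t=2$: the error has to be measured on $\cM$ whereas the nodal interpolant lives naturally on $\cM_h$, so I would pull the estimate back through the lift and control the geometric perturbation using the bounds \eqref{eq:ineq_sigma} on the area element together with the comparability of tangential gradients on $\cM$ and $\cM_h$ already exploited in \Cref{lem:geometric-consistency-A}. The remaining technical points, namely the identification $[L^2,H^2]_{t/2}=H^t$ and the comparability of the fractional powers of $\cL$ and $I-\Delta_\cM$, are standard once the two operators are known to share the same form domain and to be isomorphisms $H^2\to L^2$.
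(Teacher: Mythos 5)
Your argument is correct, but note that the paper does not actually prove this lemma: it is recalled verbatim from \cite[Equation (4.8)]{Bonito2024}, so there is no in-text proof to compare against. What you propose is the standard derivation underlying the cited estimate, and it goes through. The endpoint $t=0$ is the contraction property of the orthogonal projection; the endpoint $t=2$ follows from quasi-optimality of $P_h$ combined with the surface nodal interpolation estimate, which is already available in the SFEM literature (e.g.\ \cite[Lemma 4.3]{Dziuk2013}) in lifted form, so the geometric-perturbation step you flag as the hard part is in fact off the shelf; and operator interpolation together with the identification $[L^2(\cM),H^2(\cM)]_{t/2}=H^t(\cM)$ (which for Bessel-potential spaces on a closed manifold is the statement that complex interpolation of domains of powers of $I-\Delta_\cM$ reproduces the fractional domains) fills in $t\in(0,2)$. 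For the second inequality your plan is also sound, but you should be explicit that passing from the endpoint equivalences to the fractional powers $\cL^{t/2}$ versus $(I-\Delta_\cM)^{t/2}$ is an application of the Heinz--Kato inequality (equivalently, interpolation of domains of positive self-adjoint operators with the same operator domain), and that the $t=2$ endpoint needs $\cL\colon H^2(\cM)\to L^2(\cM)$ to be an isomorphism; this holds under the paper's hypotheses because the $D_{ij}$ are smooth, $V\in L^\infty(\cM)$ with $V\ge V_->0$, and $\cM$ is smooth and compact, so elliptic regularity plus $\Vert u\Vert_{L^2(\cM)}\le\lambda_1^{-1}\Vert\cL u\Vert_{L^2(\cM)}$ gives $\Vert u\Vert_{H^2(\cM)}\lesssim\Vert\cL u\Vert_{L^2(\cM)}$. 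Two minor points to make airtight: the nodal interpolant is only admissible because $d\le 2$ yields $H^2(\cM)\hookrightarrow C^0(\cM)$ (otherwise a Cl\'ement-type quasi-interpolant would be required), and the interpolated operator norm is $1^{1-t/2}(Ch^2)^{t/2}\lesssim h^t$, so the hidden constant is uniform in $h$, as needed.
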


\subsection{Proof of \Cref{lem:lemma_error1}}\label{app:detproofs_err}

\begin{proof}
Let $h\in(0,h_0)$, $z \in \Gamma$. Let  $f \in L^2(\cM)$ and let $p\in [0,1]$ such that $\Vert \cL^pf\Vert_{L^2(\cM)}<\infty$. Finally, let $\beta\in[0,1]$ such that $p\in [0,(1+\beta)/2]$. We then define $\cF_h(z)$  by $\cF_h(z)=(z-\cL_h)^{-1}P_h -P_h(z-\cL)^{-1}$. Note then 
\begin{equation*}
	\begin{aligned}
		\cF_h(z)
		&=(z-\cL_h)^{-1}\cL_h(\cL_h^{-1}P_h(z-\cL)\cL^{-1}-\cL_h^{-1}(z-\cL_h)P_h\cL^ {-1})\cL(z-\cL)^{-1}\\
		&= (z-\cL_h)^{-1}\cL_h(P_h\cL^{-1}-\cL_h^{-1}P_h)\cL(z-\cL)^{-1}
	\end{aligned}
\end{equation*}
Hence, 
\begin{equation*}
	\begin{aligned}
		\Vert\cF_h(z)f\Vert_{L^2(\cM)}
		&= \Vert(z-\cL_h)^{-1}\cL_h(P_h\cL^{-1}-\cL_h^{-1}P_h)\cL(z-\cL)^{-1}f\Vert_{L^2(\cM)}\\
		&= \Vert \cL_h^{1-(1-\beta)/2}(z-\cL_h)^{-1}\cL_h^{(1-\beta)/2}(P_h\cL^{-1}-\cL_h^{-1}P_h)\cL(z-\cL)^{-1}f\Vert_{L^2(\cM)}\\
		&\lesssim  \vert z\vert^{-(1-\beta)/2}\Vert \cL_h^{(1-\beta)/2}(P_h\cL^{-1}-\cL_h^{-1}P_h)\cL(z-\cL)^{-1}f\Vert_{L^2(\cM)}
	\end{aligned}
\end{equation*}
where we used  \Cref{eq:Lhs} with $s=(1-\beta)/2\in[0,1/2]$ to derive the  inequality. Note then that, using \Cref{eq:err_fem}, we have
\begin{equation*}
	\begin{aligned}
		\Vert\cF_h(z)f\Vert_{L^2(\cM)}
		&\lesssim\vert z\vert^{-(1-\beta)/2} h^{2\beta}\Vert \cL^{-(1-\beta)/2}\cL(z-\cL)^{-1}f\Vert_{L^2(\cM)}
		\\
		& =h^{2\beta}\vert z\vert^{-(1-\beta)/2}\Vert \cL^{(1+\beta)/2}(z-\cL)^{-1}f\Vert_{L^2(\cM)}\\
		& =h^{2\beta}\vert z\vert^{-(1-\beta)/2}\Vert \cL^{(1+\beta)/2-p}(z-\cL)^{-1}\cL^{p}f\Vert_{L^2(\cM)}
		%&\lesssim \vert z\vert^{-(1-\beta)/2}\vert z\vert^{-(1+2p-\beta)/2}\Vert \cL^{(2p-\beta)/2}\cL^{\beta/2}f\Vert_{L^2(\cM)}
		%=\vert z\vert^{-(1-\beta+p)}\Vert\cL^{p}f\Vert_{L^2(\cM)}
	\end{aligned}
\end{equation*}
Using then \Cref{eq:Ls} with $s=(1+\beta)/2-p\in [0,1]$, we retrieve
\begin{equation*}
	\begin{aligned}
		\Vert\cF_h(z)f\Vert_{L^2(\cM)}
		&\lesssim h^{2\beta}\vert z\vert^{-(1-\beta)/2} \vert z\vert^{-(1-(1+\beta)/2+p)}
		\Vert\cL^{p}f\Vert_{L^2(\cM)} \\
		& =h^{2\beta}\vert z\vert^{-(1-\beta+p)} 
		\Vert\cL^{p}f\Vert_{L^2(\cM)}.
	\end{aligned}
\end{equation*}
This concludes the proof.		
\end{proof}

\subsection{Proof of \Cref{lem:error_polyhedral}}

Based on the results in the previous subsections, we can now move on to the proof of \Cref{lem:error_polyhedral}.

\begin{proof}

Let $\tilde{f}\in S_h^\ell$. We introduce the inverse lift operator $\mathcal{P}_\ell : L^2(\cM) \rightarrow L^2(\cM_h)$  which maps any $F\in L^2(\cM) $ to $\mathcal{P}_\ell F = F^{-\ell}$. Let then $\cE_h=\big\| \big(\gamma(\cL_h)\tilde{f}\big)^{-\ell} - \gamma(\sL_h)\sP_h(\sigma\tilde{f}^{-\ell})\big\Vert_{L^2(\cM_{h})}
=\big\| \mathcal{P}_\ell\gamma(\cL_h)\tilde{f} - \gamma(\sL_h)\sP_h(\sigma\mathcal{P}_\ell\tilde{f})\big\Vert_{L^2(\cM_{h})}$. 
Note that by the integral representations of the operators~\eqref{eq:int_rep_operator} 
\begin{align*}
	\cE_h 
	=\big\|\frac{1}{2\pi i} \int_{\Gamma} \gamma(z)\cF(z)\tilde{f}\dd z \big\|_{L^2(\cM_h)},
\end{align*}
where we take for any $z\in\Gamma$, $\cF(z)=
\mathcal{P}_\ell(z-\cL_h)^{-1}-(z-\sL_h)^{-1}\sP_h\sigma\mathcal{P}_\ell$. Similarly, as in the proof of \Cref{prop:det}, we use the splitting~\eqref{def:Gamma} of $\Gamma$ and the triangle inequality to deduce that
\begin{equation}\label{eq:cEh}
	\begin{aligned}
		\cE_h 
		&\le
		\frac{1}{2\pi } \int_{\Omega_+} |\gamma(g_+(t))| \left\|\cF(g_+(t))\tilde{f}\right\|_{L^2(\cM_h)} \dd t
		+ \frac{\delta_0}{2\pi} \int_{\Omega_0} |\gamma(g_0(t))| \|\cF(g_0(t))\tilde{f}\|_{L^2(\cM_h)} \dd t\\
		& \qquad + \frac{1}{2\pi}\int_{\Omega_-} |\gamma(g_-(t))| \left\|\cF(g_-(t))\tilde{f}\right\|_{L^2(\cM_h)} \dd t,
	\end{aligned}
\end{equation}
where we take $\Omega_+=\Omega_-=[\delta_0,\infty)$ and $\Omega_0=[-\theta,\theta]$. 
For $*\in\lbrace +, 0, -\rbrace$, let us then introduce the quantity
\begin{equation*}
	\cE_h^{*}=\int_{\Omega_*} |\gamma(g_*(t))| \left\|\cF(g_*(t))\right\|_{L^2(\cM_h)} \dd t,
\end{equation*}
so that \Cref{eq:cEh} may be rewritten as $\cE_h \lesssim  \cE_h^{+}+\cE_h^{0}+\cE_h^{-}$ and in particular, $g_*(t)\in \Gamma$ for any $t\in\Omega_*$. 

We now fix $*\in\lbrace +, 0, -\rbrace$ and bound the term $\cE_h^{*}$. First, for any $z\in\Gamma$, we rewrite $\cF(z)$ and split
\begin{align*}
	\cF(z)
	&=
	(z-\sL_h)^{-1}\sL_h\left((z\sL_h^{-1}-I)\mathcal{P}_\ell\cL_h^{-1}-\sL_h^{-1}\sP_h\sigma\mathcal{P}_\ell(z\cL_h^{-1}-I)\right)\cL_h(z-\cL_h)^{-1}\\
	&=(z-\sL_h)^{-1}\sL_h\left(z\sL_h^{-1}(I-\sP_h\sigma)\mathcal{P}_\ell\cL_h^{-1}+\sL_h^{-1}\sP_h\sigma\mathcal{P}_\ell-\mathcal{P}_\ell\cL_h^{-1}
	\right)\cL_h(z-\cL_h)^{-1}\\
	& = \cF_1(z) + \cF_2(z),
\end{align*}
where we take $\cF_1(z)=(z-\sL_h)^{-1}\sL_h\left(z\sL_h^{-1}(I-\sP_h\sigma)\mathcal{P}_\ell\cL_h^{-1}
\right)\cL_h(z-\cL_h)^{-1}=z(z-\sL_h)^{-1}(I-\sP_h\sigma)\mathcal{P}_\ell(z-\cL_h)^{-1}$ and $\cF_2(z)=(z~-~\sL_h)^{-1}\sL_h\left(\sL_h^{-1}\sP_h\sigma\mathcal{P}_\ell-\mathcal{P}_\ell\cL_h^{-1}
\right)\cL_h(z-\cL_h)^{-1}$. Hence, by the triangle inequality,
\begin{align}\label{eq:cEh2}
	\cE_h^* 
	&
	\lesssim \int_{\Gamma_t} |\gamma(g_*(t))|\big(\left\|\cF_1(g_*(t)) \tilde f\big\|_{L^2(\cM_h)}+ \big\|\cF_2(g_*(t)) \tilde f\big\|_{L^2(\cM_h)}\right) \dd t.
\end{align}

We first bound $\left\|\cF_1(z) \tilde{f}\right\|_{L^2(\cM_h)}$. 
Using successively \Cref{eq:A5bis} and the geometric estimates in  \cite[Corollary 2.2]{Bonito2018} results in
\begin{align*}
	\left\|\cF_1(z) \tilde{f}\right\|_{L^2(\cM_h)} &= |z| \left\|(z-\sL_h)^{-1}(I-\sP_h\sigma)\mathcal{P}_\ell(z-\cL_h)^{-1} \tilde{f} \right\|_{L^2(\cM_h)} \\
	&\lesssim  \left\|(I-\sP_h\sigma)\mathcal{P}_\ell(z-\cL_h)^{-1} \tilde{f}\right\|_{L^2(\cM_h)}
	\lesssim h^2 \left\|(z-\cL_h)^{-1} \tilde{f}\right\|_{L^2(\cM)}.
\end{align*}
Using then \Cref{eq:A3}, we conclude that, for any $p\in (0,2)$, 
\begin{equation}
	\left\|\cF_1(z) \tilde{f}\right\|_{L^2(\cM_h)} 
	\lesssim h^2 | z|^{-(1-p/2)}\left\|\cL_h^{-p/2} \tilde{f}\right\|_{L^2(\cM)}^2.\label{eq:f2}
\end{equation}

To bound $\big\|\cF_2(z) \tilde{f}\big\|_{L^2(\cM_h)}$,  we apply \Cref{eq:A2,eq:B} to obtain
\begin{align*}
	\left\|\cF_2(z) \tilde{f}\right\|_{L^2(\cM_h)} 
	&\lesssim |z|^{-1/2} \left\|\left( \sL_h^{-1}\sP_h\sigma\mathcal{P}_\ell-\mathcal{P}_\ell\cL_h^{-1}\right)\cL_h (z-\cL_h)^{-1} \tilde{f}\right\|_{H^1(\cM_h)}\\
	&\lesssim |z|^{-1/2} h^2 \left\|\cL_h^{1/2} (z-\cL_h)^{-1} \tilde{f}\right\|_{L^2(\cM)}\\
	&=|z|^{-1/2} h^2 \left\|\cL_h^{(1+p)/2} (z-\cL_h)^{-1}\cL_h^{-p/2} \tilde{f}\right\|_{L^2(\cM)}
\end{align*}
and with \Cref{eq:Lhs} (applied with $s=(1+p)/2$)
\begin{equation}
	\left\|\cF_2(z) \tilde{f}\right\|_{L^2(\cM_h)} 
	\lesssim h^2|z|^{-(1-p/2)}  \left\|\cL_h^{-p/2} \tilde{f}\right\|_{L^2(\cM)} \label{eq:f1}.
\end{equation}

Using \Cref{eq:f1,eq:f2}  with $p=\min\lbrace \alpha + d/4; 1\rbrace$ together with \Cref{eq:cEh2} gives
\begin{align*}
	\cE_h^* 
	&
	\lesssim \int_{\Omega_*} |\gamma(g_*(t))| h^2| g_*(t)|^{-(1-p/2)} \| \cL_h^{-p/2}\tilde{f}\Vert_{L^2(\cM)}\dd t,
\end{align*}
which yields in turn (since $\gamma$ is an $\alpha$-\psd)
\begin{align*}
	\cE_h^* 
	\lesssim  h^2\|  \cL_h^{-p/2} \tilde{f}\Vert_{L^2(\cM)}\int_{\Omega_*}| g_*(t)|^{-(1+\alpha-p/2)}  \dd t 
	\lesssim  h^2\|  \cL_h^{-\min\lbrace \alpha + d/4; 1\rbrace/2}\tilde{f}\Vert_{L^2(\cM)},
\end{align*}
since $\alpha-p/2=\max\lbrace \alpha - (\alpha+d/4)/2; \alpha -1/2\rbrace=\max\lbrace (\alpha-d/4)/2; \alpha -1/2\rbrace\ge (\alpha-d/4)/2>0$. Finally, since this inequality holds for any $*\in\lbrace +, 0, -\rbrace$, we retrieve the claim~\eqref{eq:err_pol_ld} using \Cref{eq:cEh}.
\end{proof}

\end{document}